\newcommand{\ds}{\displaystyle\sum}
\newcommand{\dip}{\displaystyle\prod}
\newcommand{\complex}{\mathbb{C}}
\newcommand{\nat}{\mathbb{N}}
\newcommand{\integ}{\mathbb{Z}}
\newcommand{\orb}{\mathcal{O}}
\newcommand{\op}{\operatorname}
\newcommand{\ul}{\underline}
\newcommand{\Hom}{\operatorname{Hom}}
\newcommand{\End}{\operatorname{End}}
\newcommand{\Ext}{\operatorname{Ext}}
\newcommand{\GL}{\operatorname{GL}}
\newcommand{\Rep}{\operatorname{Rep}}
\newcommand{\A}{\mathbb{A}}
\newcommand{\D}{\mathbb{D}}
\newcommand{\E}{\mathbb{E}}
\DeclareMathOperator{\Dim}{\mathbf{dim}}
\newtheorem{theorem}{Theorem}[section]
\newtheorem{lemma}[theorem]{Lemma}
\newtheorem{proposition}[theorem]{Proposition}
\newtheorem{corollary}[theorem]{Corollary}
\theoremstyle{definition}
\newtheorem{remark}[theorem]{Remark}
\newtheorem{definition}[theorem]{Definition}
\newtheorem{notation}[theorem]{Notation}
\newtheorem{example}[theorem]{Example}
\def\presuper#1#2%
\title{Decompositions of Bernstein-Sato polynomials and slices}
\author{Andr\'as Cristian L\H{o}rincz}
\date{}
\begin{document}

\maketitle

\begin{abstract}
Let $G$ be a linearly reductive group acting on a vector space $V$, and $f$ a (semi-)invariant polynomial on $V$. In this paper we study systematically decompositions of the Bernstein-Sato polynomial of $f$ in parallel with some representation-theoretic properties of the action of $G$ on $V$. We provide a technique based on a multiplicity one property, that we use to compute the Bernstein-Sato polynomials of several classical invariants in an elementary fashion. Furthermore, we derive a \lq\lq slice method"  which shows that the decomposition of $V$ as a representation of $G$ can induce a decomposition of the Bernstein-Sato polynomial of $f$ into a product of two Bernstein-Sato polynomials -- that of an ideal and that of a semi-invariant of smaller degree. Using the slice method, we compute Bernstein-Sato polynomials for a large class of semi-invariants of quivers. %Furthermore, we give a simple algorithm for describing the generic representation of a type $\D$ Dynkin quiver explicitly. We provide several examples to show how all computations can be carried out by hand.
\end{abstract}
%\tableofcontents

%\newpage
%\setcounter{section}{-1}

\section*{Introduction}
\label{sec:intro}

The classification of irreducible prehomogeneous vector spaces was achieved in  \cite{saki}. The computation of $b$-functions (i.e. Bernstein-Sato polynomials) of their semi-invariants has been completed using sophisticated methods such as microlocal calculus (for example, see \cite{kimu,skko}). Extensive calculations have been done also in the case of \emph{reducible} prehomogeneous vector spaces (for example, \cite{en,sasu, sugi, ukai}). 

In the article \cite{sasu}, a criterion has been given for the decomposition of the $b$-functions on a prehomogeneous space $V$ in terms of decomposing $V$ into smaller representations. Using this, the $b$-functions for quivers of type $\A$ are computed in \cite{sugi}. In this paper, we provide a more general computational technique based on a multiplicity one property that gives similar decompositions of $b$-functions. This technique gives a more elementary approach for the computation of the (local and global) $b$-functions of some classical semi-invariants, such as the determinant, symmetric determinant, Pfaffian and others. Furthermore, we derive a slice method leading to a reduction process that decomposes the $b$-function of a semi-invariant of $V$ into the product of the Bernstein-Sato polynomial of an ideal and the $b$-function of a semi-invariant on a slice of $V$. Applying this process, we can compute the $b$-functions for some semi-invariants of quivers, including those of Dynkin type $\A,\D$ and other tree quivers.

In \cite{en}, the author gives a method by \lq\lq reflections'' that allows the computation of $b$-functions for semi-invariants of any Dynkin quiver. For quivers, the slice method has the advantage of yielding faster results in most cases (when applicable). Also, the slice technique does not require extensive knowledge of representations of quivers. For best results in the case of quivers, the two methods can (and should) be combined. We note that in \cite{robin} the $b$-function for a semi-invariant of a special quiver (with a loop) is investigated using different tools. 

In his thesis \cite[Chapter 4]{phd}, the author considers a slice method similar to the one in this paper,  but which is rather cumbersome to use. The methods in this paper are major improvements of the slice method considered there.

We consider the following examples. Take $X=(x_{ij})$ an $n\times n$ generic matrix of variables, and $\partial X$ the matrix formed by the partial derivatives $\dfrac{\partial}{\partial x_{ij}}$. Its determinant is a differential operator. The classical Capelli identity implies (see \cite{howumed}):
$$\det \partial X \cdot \det X^{s+1} = (s+1)(s+2)\cdots(s+n)\det(X)^s.$$
Hence the Bernstein-Sato polynomial of the determinant is $b(s)=(s+1)(s+2)\cdots (s+n)$. In Section \ref{sec:slice} we explain how one can use the technique based on the multiplicity one property to derive this result in an elementary way.

A simple, yet non-trivial example of interest is the following semi-invariant, coming from the quiver $\D_4$:
$$\det\begin{pmatrix}
X & Y & 0 \\
0 & Y & Z
\end{pmatrix}.$$
Here $X,Y,Z$ are generic matrices of variables, with $X\in M_{\beta_4,\beta_1},Y\in M_{\beta_4,\beta_2},Z\in M_{\beta_4,\beta_3}$ and $\beta_1+\beta_2+\beta_3=2\beta_4$. We compute its $b$-function (together with many other quivers) in Section \ref{subsec:exquiv} based on the slice method developed in Section \ref{subsec:method}.

The paper is organized as follows. In Section \ref{sec:b-func}, we focus on generalities about Bernstein-Sato polynomials, mostly in the equivariant setting. 

In Section \ref{sec:slice}, we start by describing a method based on a multiplicity one property. We use Theorem \ref{thm:mult1} in order to compute the $b$-functions of several classical semi-invariants in Section \ref{subsec:exam}. Then we derive a slice method in Section \ref{subsec:method}, where the main result is Theorem \ref{thm:mainapp}. We also give the analogous result for $b$-functions of several variables (Theorem \ref{thm:multi}).

In Section \ref{sec:quiv}, after introducing some background material on quivers, we apply the slice method (Theorem \ref{thm:mainapp}) to arrows of quivers (Theorem \ref{thm:bquiv}). This gives a practical reduction method for computing $b$-functions of many (determinantal) quiver semi-invariants. This includes those of quivers of type $\A,\D$ and other tree quivers (see Theorems \ref{thm:tree}), \ref{thm:dee}. We work out several examples in Section \ref{subsec:exquiv} of $b$-functions of one variable and $b$-functions of several variables. %Using Proposition \ref{thm:neg}, we give an example of a semi-invariant of the Dynkin quiver $\E_6$ that is not sliceable.

Besides yielding the roots of $b$-functions, the slice method provides other useful information as well. For example, it gives an algorithm for determining the locally semi-simple representation corresponding to a semi-invariant (see Proposition \ref{prop:locsemi}). Based on slices, we also give an easy algorithm for the explicit description of generic representations for Dynkin quivers of type $\D$, as described in Appendix \ref{app:decomp}. 

\begin{notation}\label{not:notation}
As usual, $\nat$ will denote the set of all non-negative integers and $\complex$ the set of complex numbers. For $a,b,d\in \nat, a\leq b$, we use the following notation in $\complex[s]$: 
$$[s]^d_{a,b}:=\prod_{i=a+1}^b \prod_{j=0}^{d-1} (ds+i+j).$$
In the case $d=1$, we sometimes write $[s]_{a,b}:=[s]_{a,b}^1$. Also, if $a=0$, we sometimes write $[s]^d_{b}:=[s]^d_{0,b}$. Hence $[s]^{d}_{a,b}[s]^d_{a}=[s]^d_b$.

Now fix an $l$-tuple $\underline{m}=(m_1,\dots,m_l)\in \nat^l$. Then for any $l$-tuple $(d_1,\dots,d_l)$, we use the following notation in $\complex[s_1,\dots,s_l]$:
$$[s]^{d_1,\dots,d_l}_{a,b}=\prod_{i=a+1}^b \prod_{j=0}^{d-1} (d_1s_1+\dots+d_ls_l+i+j),$$
where $d=m_1d_1+\dots + m_ld_l$.
\end{notation}

\section{Bernstein-Sato polynomials}
\label{sec:b-func}

\subsection{Definition} \label{subsec:bern}

First we define and briefly recall some basic properties about Bernstein-Sato polynomials. We will interchangeably call them also $b$-functions, especially in the contexts of Theorem \ref{thm:bfun} and Lemma \ref{lem:several} from Section \ref{subsec:bfun}. For details on Bernstein-Sato polynomials, we refer the reader to \cite{gyoja,kashi}. 

Throughout this paper we work over the complex field $\complex$. Let $V$ be an $n$-dimensional vector space. Denote by $D$ the algebra of differential operators on $V$ (i.e. the Weyl algebra in $n$ variables), and by $D_v$ the algebra of differential operators regular at $v\in V$ (i.e. the localization of $D$ at $v$).

Let $f\in \complex[V]$ be a non-zero polynomial, and let $R$ be one of the rings $D$ or $D_v$. Then there exits (see \cite{kashi} a differential operator $P(s)\in R[s]:=R\otimes \complex[s]$ and a non-zero polynomial $b(s)\in \complex[s]$ such that 
$$P(s)\cdot f^{s+1}(x) = b(s)\cdot f^s(x).$$
The functions $b(s)$ satisfying such a relation form an ideal of $\complex[s]$, whose monic generator we denote by $b_f(s)$ or $b_{f,v}(s)$, if $R=D$ or $D_v$, respectively. We call $b_f$ the (global) \textit{Bernstein-Sato polynomial} of $f$, and $b_{f,v}$ the local Bernstein-Sato polynomial of $f$ at $v$. 

By \cite{kashi}, all roots of $b_f(s)$ are negative rational numbers. Moreover, if $f$ is a homogeneous polynomial, then $b_{f,0}(s)= b_f(s)$ (see \cite[Lemma 2.5.3]{gyoja}).

Throughout we work mostly in equivariant settings as seen in the next section.

\subsection{$b$-functions of semi-invariants}\label{subsec:bfun}

Let $G$ be a (connected) reductive algebraic group, acting rationally on $V$. That is, we have a morphism of algebraic groups $\rho: G \to \GL(V)$. Then we have an action of $G$ on $\complex[V]$ by $(g\cdot f)(v)=f(g^{-1}\cdot v)$ for all $v\in V$, where $g\in G$, $f\in \complex[V]$. We call a polynomial $f\in \complex[V]$ a semi-invariant, if there is a character $\sigma\in \Hom(G,\complex^{\times})$ such that $g\cdot f = \sigma(g) f$, that is, $f(gv)=\sigma(g)^{-1}f(v)$. In this case we say the weight of $f$ is $\sigma$. In the literature such $f$ is sometimes also called a relative invariant polynomial. We form the ring of semi-invariants 
$$\op{SI}(G,V)=\bigoplus_{\sigma} \op{SI}(G,V)_{\sigma}=\complex[V]^{[G,G]},$$
where the sum runs over all characters $\sigma$ and the weight spaces are
$$\op{SI}(G,V)_{\sigma}=\{f\in \complex[V] | f \text{ is a semi-invariant of weight } \sigma\}.$$
The multiplicity of $\sigma$ is $\dim\op{SI}(G,V)_{\sigma}$. Following \cite{en}, we make the following definition (which makes sense even when $G$ is not reductive): 
\begin{definition}\label{def:multfree}
We say that $\sigma$ is \emph{multiplicity-free}, if the multiplicity of $\sigma^k$ is $1$, for any $k\in\nat$.
\end{definition}

By a standard argument, one can give the following geometric characterization of the above property when $G$ is a connected reductive group: a semi-invariant $f\in \complex[V]$ has multiplicity-free weight $\sigma$ if and only if there is a unique closed orbit $\orb$ in the open affine neighborhood $f\neq 0$. In the spirit of \cite{shme}, for an element $x\in \orb$ of this orbit we say that $x$ is the \textit{locally semi-simple} point of $f$.

Given a semi-invariant $f$ of weight $\sigma$, for the results of this paper regarding $b$-functions to hold (see Theorem \ref{thm:bfun})  it is enough to require the multiplicity of $\sigma^k $ to be $1$ for just $k=\deg f-1$.

Let $x=(x_1,\dots, x_n)$ be the coordinate system with respect to a basis of $V$. %Similarly, let $\{e^*_1,\dots , e^*_n\}$ be the dual basis of $\{e_1,\dots , e_n\}$ and $x^*=(x^*_1,\dots,x_n^*)$ be the coordinate system of $V^*$ with respect to the dual basis. 
We denote the dual variables (partial derivatives) by
\[\partial x=(\partial_1, \dots, \partial_n).\]
Let $V^*$ be the dual space of $V$, with is naturally a $\GL(V)$-module. For any $d\geq 0$, let $\complex[V]_d$ (resp. $\complex[V^*]_d$) be the subspaces of homogeneous polynomials of degree $d$ in $\complex[V]$ (resp. $\complex[V^*]$). We have the $\GL(V)$-equivariant pairing between $\complex[V]_d$ and $\complex[V^*]_d$ by
\begin{equation}\label{eq:pair}
\langle P,P^*\rangle = P^*(\partial x) \cdot P(x).
\end{equation}
This gives a $\GL(V)$-equivariant isomorphism $\complex[V^*]_d \cong (\complex[V]_d)^*$.

Let $f\in \complex[V]$ be a semi-invariant of weight $\sigma$, and assume $\sigma$ is multiplicity-free. Then $f$ must be homogeneous (see \cite[Lemma 1.3]{gyoja}). Since $G$ is reductive, by the above pairing there is a dual semi-invariant $f^*\in \complex[V^*]$ of weight $\sigma^{-1}$ of the same degree, canonical up to constant. In fact, we can choose a basis of $V$ such that the subset $\rho(G)\subset \GL(V)$ is stable under conjugate transpose, in which case $f^*$ can be obtained from $f$ by taking the complex conjugates of the coefficients -- see \cite{saki}.
The next result follows by \cite[Lemma 1.6,1.7]{gyoja} and \cite[Corollary 2.5.10]{gyoja}.

\begin{theorem}\label{thm:bfun}
Let $f\in \complex[V]$ be a semi-invariant with multiplicity-free weight, and let $f^*\in\complex[V^*]$ be the dual semi-invariant. We have
\begin{equation}\label{eq:good}
f^*(\partial x)\cdot f(x)^{s+1}=b(s)f(x)^s.
\end{equation}
where $b(s)$ is a polynomial equal to the Bernstein-Sato polynomial $b_f(s)$ up to a non-zero constant factor and $\deg b_f(s) = \deg f$.
\end{theorem}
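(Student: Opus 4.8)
The plan is to reduce the statement to the cited lemmas and corollary from \cite{gyoja} by first establishing that the left-hand side of \eqref{eq:good} produces \emph{some} scalar multiple of $f(x)^s$, and then identifying that scalar with the Bernstein-Sato polynomial up to a constant. First I would observe that, since $f$ is a semi-invariant of multiplicity-free weight $\sigma$, it is homogeneous of some degree $d=\deg f$ (this is \cite[Lemma 1.3]{gyoja}, already recalled in the text). Consider the differential operator $f^*(\partial x)$: it is $G$-semi-invariant of weight $\sigma^{-1}$ under the induced action of $G$ on $D$, and it is homogeneous of degree $-d$ for the grading by order. Applying it to $f(x)^{s+1}$ — which one can regard, after specializing $s$ to positive integers $k$ and using that $b$-function identities at infinitely many integers force a polynomial identity in $s$ — yields an element of $\complex[V]$ that is again a semi-invariant of weight $\sigma^{k}$ and of the same degree $dk$ as $f(x)^{k}$. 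By the multiplicity-one hypothesis, $\op{SI}(G,V)_{\sigma^{k}}$ in degree $dk$ is one-dimensional, spanned by $f(x)^{k}$, so $f^*(\partial x)\cdot f(x)^{k+1}=c_k\, f(x)^{k}$ for a scalar $c_k$; interpolating gives a polynomial $b(s)\in\complex[s]$ with $f^*(\partial x)\cdot f(x)^{s+1}=b(s) f(x)^s$. (As remarked before the theorem, only the case $k=d-1$ is actually needed, which is exactly the instance governing the top-degree coefficient and hence nonvanishing of $b$.)

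Next I would invoke \cite[Lemma 1.6, 1.7]{gyoja} to show that $b(s)$ is not identically zero — equivalently that $f^*(\partial x)$ does not annihilate $f(x)^{s+1}$ — and that the relation \eqref{eq:good} is the ``optimal'' one in the sense that $P(s)=f^*(\partial x)$ is the minimal-order choice realizing a $b$-function relation; this forces $b(s)$ to divide no proper factor obstruction, so that $b_f(s)\mid b(s)$. Then \cite[Corollary 2.5.10]{gyoja} supplies the reverse divisibility together with the degree statement: for a semi-invariant of multiplicity-free weight the Bernstein-Sato polynomial has degree exactly $\deg f$, and the relation produced by the dual semi-invariant realizes it. Combining the two divisibilities and comparing degrees yields $b(s)=c\cdot b_f(s)$ for a nonzero constant $c$, and $\deg b_f=\deg f$.

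The main obstacle — or rather the main point requiring care — is the passage from the scalar identities at integer values of $s$ to a genuine polynomial identity, and ensuring $b(s)$ is nonzero of the correct degree rather than merely a divisor of something of that degree. The nonvanishing is where the multiplicity-free hypothesis does real work: if $f^*(\partial x)$ killed $f(x)^{k+1}$ for some $k$, one could pull back along the action to contradict the uniqueness of the closed orbit in $\{f\neq 0\}$, or more directly contradict the known fact (from \cite{gyoja}) that $-1$ is always a root of $b_f$ and that the roots are controlled by the degree. I would handle the degree count by noting that the order of $f^*(\partial x)$ as a differential operator is exactly $d=\deg f$, which caps $\deg b(s)\le d$, while \cite[Corollary 2.5.10]{gyoja} gives the matching lower bound $\deg b_f\ge d$; since $b_f\mid b$ this pins everything down. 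All of this is essentially a careful unwinding of the cited results, so the proof is short modulo \cite{gyoja}.
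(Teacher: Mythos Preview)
Your proposal is correct and takes essentially the same approach as the paper: the paper gives no proof at all beyond the sentence ``The next result follows by \cite[Lemma 1.6, 1.7]{gyoja} and \cite[Corollary 2.5.10]{gyoja},'' and your write-up is precisely an unpacking of how those citations combine (multiplicity-freeness forces the scalar relation, Gyoja's lemmas give nonvanishing and divisibility, and the corollary pins down the degree). Your elaboration is more detailed than what the paper provides, but the route is identical.
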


%A general goal is to compute the $b$-function $b(s)$ from the differential equation above. 

We call $(G,V)$ a prehomogeneous vector space, if $V$ has a dense open orbit $\orb$, i.e. $\overline{\orb}=V$. By Rosenlicht's Theorem (see \cite{preh}), $(G,V)$ is prehomogeneous iff all weight multiplicities of the ring of semi-invariants are at most $1$. Moreover, the following holds (see \cite{saki}):
\begin{theorem}\label{thm:satokimura}
Assume $(G,V)$ is a prehomogeneous vector space, and let $Z(f_1),Z(f_2),\dots, Z(f_k)$ be the irreducible components of $V\backslash \orb$ of codimension $1$, for some $f_1,f_2,\dots,f_k \in \complex[V]$. Then $f_1,f_2\dots,f_k$ are algebraically independent semi-invariants and $\op{SI}(G,V)=\complex[f_1,f_2,\dots,f_k]$.
\end{theorem}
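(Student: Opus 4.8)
The plan is to prove this in three stages: first that the $f_i$ are semi-invariants, then that $\op{SI}(G,V)$ is generated by them (as a polynomial ring), and finally that they are algebraically independent. For the first stage, I would fix the dense orbit $\orb = Gx_0$ and observe that $G$ permutes the irreducible components of the closed set $V\setminus \orb$. Since each $Z(f_i)$ is a codimension-one component, the connectedness of $G$ forces $G$ to fix each $Z(f_i)$ setwise (a connected group cannot act nontrivially on a finite set). Writing each $f_i$ as a product of irreducibles and using that $Z(f_i)$ is irreducible, we may take $f_i$ itself irreducible. Then for $g \in G$, the polynomial $g \cdot f_i$ cuts out the same hypersurface $Z(f_i)$, so $g\cdot f_i = \sigma_i(g) f_i$ for some scalar $\sigma_i(g)\in\complex^\times$; the map $g\mapsto \sigma_i(g)$ is readily checked to be a character, so $f_i$ is a semi-invariant of weight $\sigma_i$.

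For the second stage, let $f\in \complex[V]$ be an arbitrary semi-invariant of weight $\tau$. The key point is that the divisor of $f$ must be supported on $V\setminus\orb$: indeed, $\{f\ne 0\}$ is $G$-stable, hence a union of orbits, and if it met $\orb$ it would contain the dense orbit, so $Z(f)\subseteq V\setminus\orb$. Since $\complex[V]$ is a UFD and $V\setminus\orb$ has irreducible codimension-one components exactly $Z(f_1),\dots,Z(f_k)$ (any other component has codimension $\geq 2$ and so contributes nothing to a principal divisor), we get $f = c\cdot f_1^{a_1}\cdots f_k^{a_k}$ for some $c\in\complex^\times$ and $a_i\in\nat$. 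Thus every semi-invariant lies in $\complex[f_1,\dots,f_k]$, and since $\op{SI}(G,V)=\complex[V]^{[G,G]}$ is spanned by semi-invariants (here one uses that $G$ is reductive, via Theorem \ref{thm:bfun}'s hypothesis context — actually just that $[G,G]$ acts completely reducibly on $\complex[V]$), we conclude $\op{SI}(G,V)=\complex[f_1,\dots,f_k]$.

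For algebraic independence, suppose there were a nontrivial polynomial relation among the $f_i$; comparing weights shows that a relation must hold among monomials of the same weight, so it suffices to rule out a relation $f_1^{a_1}\cdots f_k^{a_k} = f_1^{b_1}\cdots f_k^{b_k}$ with $(a_i)\neq(b_i)$, which is impossible by unique factorization in $\complex[V]$ since the $f_i$ are distinct irreducibles. Hence the $f_i$ are algebraically independent and $\op{SI}(G,V)$ is a polynomial ring in $k$ variables. The main obstacle I anticipate is the careful bookkeeping in the first stage: justifying rigorously that $G$ connected implies each codimension-one component is individually $G$-stable, and that irreducibility of $Z(f_i)$ lets us replace $f_i$ by an irreducible generator of its defining ideal without loss of generality. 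The equivalence with Rosenlicht's criterion (multiplicities $\leq 1 \iff$ prehomogeneous) I would simply cite as stated in the excerpt; the substance of the theorem is the explicit polynomial-ring structure, which follows cleanly from the UFD argument once the semi-invariance of the $f_i$ is in hand.
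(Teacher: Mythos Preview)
The paper does not actually prove Theorem~\ref{thm:satokimura}; it is quoted as background with a reference to Sato--Kimura \cite{saki}. So there is no in-paper proof to compare against, and your outline is the standard argument for this classical result.

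Your Stages~1 and~2 are correct as written. In Stage~3 there is a small gap: from ``a relation must hold among monomials of the same weight'' you pass directly to ``it suffices to rule out $f_1^{a_1}\cdots f_k^{a_k}=f_1^{b_1}\cdots f_k^{b_k}$'', but a linear relation among several monomials of the same weight does not, by itself, reduce to an equality of two of them. The missing ingredient is precisely the Rosenlicht statement you say you will cite: prehomogeneity forces every weight space $\op{SI}(G,V)_\sigma$ to be at most one-dimensional. Granting this, any two monomials $f^{\underline a}$ and $f^{\underline b}$ of the same weight are proportional, hence equal by unique factorization, hence $\underline a=\underline b$; so distinct exponent vectors give monomials in distinct weight spaces, and linear independence follows. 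With that step made explicit, your proof is complete. (Your parenthetical invoking Theorem~\ref{thm:bfun} for the decomposition $\op{SI}(G,V)=\bigoplus_\sigma \op{SI}(G,V)_\sigma$ is unnecessary: this is just the character decomposition under the torus $G/[G,G]$ and is taken as the definition in the paper.)
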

The semi-invariants $f_1,f_2\dots,f_k$ as above are called \textit{fundamental} semi-invariants. We mention that many of our examples in this paper are prehomogeneous vector spaces, but we also work with spaces that are not necessarily prehomogeneous but have semi-invariants of multiplicity-free weights (for example, Theorem \ref{thm:tree}).

We have the following notion of $b$-function of several variables (see \cite{sata}).Let $f_1,\dots f_l \in \complex[V]$ be semi-invariants of weights $\sigma_1,\dots, \sigma_l$, respectively. Assume that the product $\sigma_1\cdots \sigma_l$ is a multiplicity-free weight in $\complex[V]$. In this case we can take respective dual semi-invariants $f^*_1,\dots, f^*_l\in \complex[V^*]$. Put $\underline{f}=(f_1,\dots, f_l)$ and $\underline{f}^*=(f_1^*,\dots , f_l^*)$. For a multi-variable $\underline{s}=(s_1,\dots ,s_l)$, we define $\underline{f}^{\underline{s}}=\dip_{i=1}^l f_i^{s_i}$, and $\underline{f}^{*\underline{s}}=\dip_{i=1}^l f_i^{*s_i}$.

\begin{lemma}\label{lem:several}
Using the notation above, if $\sigma_1\cdots \sigma_l$ is multiplicity-free, then for any $l$-tuple $\underline{m}=(m_1,\dots,m_l)\in \nat^l$ there is a polynomial $b_{\ul{f},\underline{m}}(\underline{s})$ of $l$ variables such that 
\begin{equation}
\underline{f}^{*\underline{m}}(\partial x)\cdot\underline{f}^{\underline{s}+\underline{m}}(x)=b_{\ul{f},\underline{m}}(\underline{s})\underline{f}^{\underline{m}}(x).
\end{equation}
\end{lemma}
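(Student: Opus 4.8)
The statement is a multi-variable analogue of Theorem~\ref{thm:bfun}, and the natural strategy is to reduce it to the single-variable case (Theorem~\ref{thm:bfun}) applied to the product $f_1^{m_1}\cdots f_l^{m_l}$, and then to \emph{refine} the scalar identity into a polynomial identity in the several variables $\underline{s}$. First I would set $F = f_1^{m_1}\cdots f_l^{m_l}$, which is a semi-invariant of weight $\sigma := \sigma_1^{m_1}\cdots\sigma_l^{m_l}$, and observe that $F^* = f_1^{*m_1}\cdots f_l^{*m_l} = \underline{f}^{*\underline{m}}(\partial x)$ up to a nonzero scalar, since taking duals is multiplicative on semi-invariants (this uses that $\sigma$ is multiplicity-free, so each power $\sigma^k$ has a one-dimensional weight space, forcing the product of dual generators to be, up to scalar, \emph{the} dual generator of the product). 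By Theorem~\ref{thm:bfun} applied to $F$ we then get $\underline{f}^{*\underline{m}}(\partial x)\cdot F(x)^{t+1} = b_F(t)\,F(x)^t$ for a scalar $t$, i.e. the desired identity holds along the diagonal $\underline{s} = (t,\dots,t)\cdot(\text{appropriate scaling})$.

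**Promoting scalars to several variables.** The key point is that the left-hand side $\underline{f}^{*\underline{m}}(\partial x)\cdot \underline{f}^{\underline{s}+\underline{m}}(x)$, when divided by $\underline{f}^{\underline{m}}(x)$, is \emph{a priori} a polynomial in $s_1,\dots,s_l$: indeed $\partial_i$ acting on $f_j^{s_j+m_j}$ produces $(s_j+m_j)$ times lower-degree terms times the Jacobian, and after applying all $\deg F$ derivatives in $\underline{f}^{*\underline{m}}(\partial x)$ and pulling out the common factor $\underline{f}^{\underline{m}}(x) = \prod f_i^{m_i}$, what remains is $\big(\prod_i\prod_{k} (s_i + c_{i,k})\big)$-type expressions summed with polynomial coefficients in the $x_j$ and with the various $f_j$. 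One then checks by a degree/equivariance argument that this remainder is actually a constant in $x$ (it is a semi-invariant of weight $\sigma_1^0\cdots$, i.e.\ trivial weight, hence a scalar — by the multiplicity-free hypothesis the only invariant of that weight in the relevant degree is a constant). This defines $b_{\underline f,\underline m}(\underline s)$, and specializing all $s_i$ to a common value recovers $b_F$, so the construction is consistent. Concretely: write $\underline{f}^{*\underline{m}}(\partial x)\cdot\underline{f}^{\underline{s}+\underline{m}}(x) = Q(\underline{s},x)\cdot\underline{f}^{\underline{s}}(x)$ where $Q$ is polynomial in $\underline{s}$ with coefficients in $\complex[V]_{\le \deg F}$; the semi-invariance of both sides (the operator has weight $\sigma^{-1}=\prod\sigma_i^{-m_i}$, the power $\underline f^{\underline s + \underline m}$ has weight $\prod\sigma_i^{-(s_i+m_i)}$, formally) forces $Q(\underline s,x)/\underline{f}^{\underline m}(x)$ to be $G$-invariant and of degree $0$, hence in $\complex$, giving the polynomial $b_{\underline f,\underline m}(\underline s)$ as claimed.

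**Where the difficulty lies.** The routine part is the manipulation of the differential operator and the bookkeeping of weights; the substantive step — and the one I would write out carefully — is justifying that the ``remainder'' $Q(\underline s, x)/\underline f^{\underline m}(x)$ is genuinely a \emph{polynomial} in $\underline s$ with \emph{scalar} (not merely invariant-function) coefficients, uniformly in $\underline s$. For the polynomiality in $\underline s$, one should argue that for each fixed monomial in $\partial x$ appearing in $\underline f^{*\underline m}(\partial x)$, its action on $\prod f_i^{s_i+m_i}$ yields $\prod f_i^{s_i} \cdot (\text{polynomial in } \underline s \text{ of bounded degree})$ by Leibniz, since each differentiation lowers one exponent $s_i+m_i$ by $1$ and extracts a factor linear in $s_i$ together with a polynomial derivative of $f_i$; since $m_i \ge 1$ bounds how many times we can differentiate each $f_i$-block before needing the generic $s_i$-dependence, and total derivative count equals $\deg F$, the $\underline s$-degree is bounded by $\deg F$. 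For the constancy in $x$: after factoring out $\underline f^{\underline m}$, the resulting rational function is regular (no poles, by the division just performed) and $G$-semi-invariant of trivial weight, hence constant by the multiplicity-free hypothesis — here the citation to \cite[Lemma 1.6, 1.7]{gyoja} or the one-closed-orbit characterization from the previous subsection does the work. The consistency check against Theorem~\ref{thm:bfun} along the diagonal then confirms we have not lost anything, and uniqueness of $b_{\underline f,\underline m}$ is automatic since $\underline f^{\underline m}(x)\neq 0$ generically.
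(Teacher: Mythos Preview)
The paper does not supply a proof of this lemma; it merely refers the reader to \cite{sata}. Your overall strategy---reduce to the one-variable case, use equivariance to get a scalar, and use Leibniz to see that scalar is polynomial in $\underline{s}$---is the standard one and is essentially correct. However, there is a genuine gap in the way you carry it out.

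The problematic step is the assertion that
\[
\underline{f}^{*\underline{m}}(\partial x)\cdot\underline{f}^{\underline{s}+\underline{m}}(x) = Q(\underline{s},x)\cdot\underline{f}^{\underline{s}}(x)
\]
with $Q\in\complex[\underline{s}]\otimes\complex[V]$, i.e.\ polynomial in $x$. The Leibniz expansion does \emph{not} give this: applying a degree-$d$ constant-coefficient operator to $\prod_i f_i^{s_i+m_i}$ produces terms of the form $\prod_i f_i^{s_i+m_i-k_i}$ with $\sum_i k_i=d$, and nothing prevents some $k_i$ from exceeding $m_i$, so a priori you only get $Q\in\complex[\underline{s}]\otimes\complex[V][f_1^{-1},\dots,f_l^{-1}]$. (For a non-equivariant illustration: with $f_1=x$, $f_2=x+y$, $f_1^*=\partial_x$, $f_2^*=\partial_x+\partial_y$, and $m_1=m_2=1$, the quotient by $f_1^{s_1}f_2^{s_2}$ genuinely has poles.) Your follow-up ``regular (no poles, by the division just performed)'' is therefore unjustified, and the subsequent ``$G$-invariant of degree $0$, hence constant'' argument collapses, since a $G$-invariant rational function of degree $0$ need not be constant even when $\complex[V]^G=\complex$.

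The clean fix is to invert the order of the two steps. First restrict to non-negative integer tuples $\underline{s}$: then the left-hand side is an honest polynomial in $\complex[V]$ and is a semi-invariant of weight $\prod_i\sigma_i^{s_i}$. The key point---which you never state---is that this weight space is one-dimensional for \emph{every} such $\underline{s}$, not just along the diagonal: if $h\in\op{SI}(G,V)_{\prod\sigma_i^{s_i}}$ then $h\cdot\prod_i f_i^{N-s_i}\in\op{SI}(G,V)_{(\sigma_1\cdots\sigma_l)^N}$ for $N\gg 0$, and the latter is spanned by $(f_1\cdots f_l)^N$ by hypothesis, forcing $h\in\complex\cdot\prod_i f_i^{s_i}$. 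Hence the left-hand side equals $c(\underline{s})\,\underline{f}^{\underline{s}}$ for some scalar $c(\underline{s})$. \emph{Now} use Leibniz: evaluating at any fixed $x_0$ with $\prod f_i(x_0)\neq 0$ shows that $c(\underline{s})$ is polynomial in $\underline{s}$, and the identity extends to formal $\underline{s}$ by Zariski density.
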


If  $\sigma_1\cdots \sigma_l$ is multiplicity-free, then all the individual weights $\sigma_i$ are multiplicity-free, and one can easily recover the $b$-function $b_{f_i}(s)$ of one variable from $b_{\ul{f},\underline{m}}(\ul{s})$. Again, if $(G,V)$ is prehomogeneous then any $\sigma_1\cdots \sigma_l$ is automatically multiplicity-free.

\subsection{Bernstein-Sato polynomials of ideals}\label{subsec:bideal}

Now we consider tuples of polynomials $\ul{f}=(f_1,\dots,f_r)$ with $f_i \in \complex[V]$, from a different viewpoint. Following \cite[Definition 3.3]{bmax}, we introduce (note that in the case of $r=1$ we recover Definition \ref{def:multfree}):

\begin{definition}\label{def:tuple}
A tuple $\ul{f}= (f_1,\dots,f_r)$ in $\complex[V]$ is said to be a \textit{multiplicity-free tuple} if
\begin{itemize}
\item[(a)] For every $k\in \nat$, the polynomials
\[\ul{f}^{\ul{k}} = f_1^{k_1}\cdots f_r^{k_r},\mbox{ for } \ul{k}=(k_1,\dots,k_r)\in\nat^r \mbox{ satisfying } k_1+\dots+k_r=k,\]
span an irreducible $G$-subrepresentation $M_{k}\subset \complex[V]$.
\item[(b)] For every $k\in\nat$, the multiplicity of the $G$-representation $M_{k}$ inside $\complex[V]$ is equal to one.
\end{itemize}
\end{definition}

We note that given any multiplicity-free tuple $\ul{f}=(f_1,\dots,f_r)$, any \lq\lq power'' of the tuple $\ul{f}$ is also multiplicity-free. Here the $d$th power of the tuple $\ul{f}$ is a new tuple formed by all elements of the form
\[\ul{f}^{\ul{d}} = f_1^{d_1}\cdots f_r^{d_r},\mbox{ for } \ul{d}=(d_1,\dots,d_r)\in\nat^r \mbox{ satisfying } d_1+\dots+d_r=d.\]

Now fix a multiplicity-free tuple $\ul{f} = (f_1,\dots,f_r)$, which WLOG we assume that is a basis of $M_1$. Since $G$ is reducitive and the multiplicity of $M_1$ is in $\complex[V]$ is one, there is a dual representation $M_1^*$ in $\complex[V^*]$ of multiplicity one. We take a basis $f_1^*,\dots,f_r^*$ that is $G$-dual (up to constant) to $f_1,\dots,f_r$ with respect to the pairing (\ref{eq:pair}). Then the element 
\[D_{\ul f}=\ds_{i=1}^r f^*_i(\partial x) f_i(x)\]
is a $G$-invariant differential operator. Denote by $I$ the ideal generated by $f_1,\dots,f_r$ in $\complex[V]$, and let $b_I(s)=b_{\ul{f}}(s)$ be the Bernstein-Sato polynomial of $I$ -- for the definition of Bernstein-Sato polynomials of ideals (or tuples), we refer the reader to \cite{bideal}.  By \cite[Proposition 3.4]{bmax}, we have the following result.

\begin{proposition}\label{prop:pf}
Consider a multiplicity-free tuple $\ul{f}=(f_1,\dots,f_r)$. If we let $s=s_1+\dots+s_r$ then there exists a polynomial $P_f(s)\in\complex[s]$ such that
\[D_{\ul{f}}\cdot \ul{f}^{\ul s} = P_{\ul f}(s)\cdot \ul{f}^{\ul s},\]
and the Bernstein-Sato polynomial $b_{\ul{f}}(s)$ divides $P_{\ul{f}}(s)$.
\end{proposition}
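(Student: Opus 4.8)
The plan is to use representation theory to see that $D_{\ul f}$ acts by a scalar on each irreducible piece $M_k$, then to interpolate these scalars into a single polynomial in $s=s_1+\dots+s_r$, and finally to read off the divisibility directly from the definition of $b_{\ul f}$. For the first point: since $D_{\ul f}\in D$ is $G$-invariant, the induced map $D_{\ul f}\colon\complex[V]\to\complex[V]$ is $G$-equivariant, and its restriction to the irreducible submodule $M_k$ has image a $G$-equivariant quotient of $M_k$, hence either $0$ or isomorphic to $M_k$; in the latter case, by the multiplicity-one hypothesis in Definition \ref{def:tuple}(b), it must equal the unique copy $M_k\subseteq\complex[V]$. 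Either way $D_{\ul f}(M_k)\subseteq M_k$, so Schur's lemma forces $D_{\ul f}$ to act on $M_k$ as a scalar $c(k)\in\complex$; since each monomial $\ul f^{\ul k}$ with $|\ul k|=k$ lies in $M_k$, this gives $D_{\ul f}\cdot\ul f^{\ul k}=c(k)\,\ul f^{\ul k}$ for all $\ul k\in\nat^r$, with the scalar depending only on $k=|\ul k|$.

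Next I would pass to the standard formal setting: on the $D[\ul s]$-module $\complex[V][f_1^{-1},\dots,f_r^{-1}]\cdot\ul f^{\ul s}$ (with $D$ acting through logarithmic derivatives), a Leibniz-rule expansion of each $f_i^*(\partial x)\cdot(f_i\,\ul f^{\ul s})$ shows that $D_{\ul f}\cdot\ul f^{\ul s}=R(\ul s,x)\,\ul f^{\ul s}$ for some $R\in\complex[V][f_1^{-1},\dots,f_r^{-1}][\ul s]$ (each summand being a product of falling factorials in the $s_i$ times a Laurent monomial in the $f_i$ with polynomial coefficient). Specialization of $\ul s$ at any $\ul k\in\nat^r$ is compatible with the $D$-action and sends $\ul f^{\ul s}\mapsto\ul f^{\ul k}$, so comparing with the previous paragraph and cancelling the unit $\ul f^{\ul k}$ gives $R(\ul k,x)=c(k)$ for all $\ul k\in\nat^r$. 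Restricting to $\ul k$ of the form $(k,0,\dots,0)$ shows that $c$ agrees on $\nat$ with the polynomial $Q(s_1)=R((s_1,0,\dots,0),x)$, which is necessarily in $\complex[s]$ (its values $c(k)$ are complex numbers, so its coefficients are too); then $R(\ul s,x)-Q(s_1+\dots+s_r)$ lies in $\complex[V][f_1^{-1},\dots,f_r^{-1}][\ul s]$ and vanishes at every point of $\nat^r$, hence is identically zero. Setting $P_{\ul f}(s):=Q(s)$ and $s=s_1+\dots+s_r$, we obtain $D_{\ul f}\cdot\ul f^{\ul s}=P_{\ul f}(s)\,\ul f^{\ul s}$, which is the first assertion.

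For the divisibility, I would just unwind this identity together with the definition of $D_{\ul f}$:
\[P_{\ul f}(s)\,\ul f^{\ul s}=D_{\ul f}\cdot\ul f^{\ul s}=\sum_{i=1}^r f_i^*(\partial x)\cdot\bigl(f_i\,\ul f^{\ul s}\bigr)\ \in\ \sum_{i=1}^r D[\ul s]\cdot\bigl(f_i\,\ul f^{\ul s}\bigr),\]
since each $f_i^*(\partial x)\in D\subseteq D[\ul s]$. This is exactly a relation of the form that defines the Bernstein--Sato polynomial $b_{\ul f}(s)$ of the ideal $(f_1,\dots,f_r)$ (see \cite{bideal}), so $P_{\ul f}(s)$ lies in the ideal of $\complex[s]$ generated by $b_{\ul f}(s)$; that is, $b_{\ul f}(s)\mid P_{\ul f}(s)$. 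The representation-theoretic input in the first paragraph is the conceptual core but is quick once arranged, and this last step is a one-line consequence of the definitions; I expect the main technical care to go into the middle step — realizing $\ul f^{\ul s}$ formally so that specialization at integer exponents is well behaved, and checking that the eigenvalues $c(k)$ genuinely assemble into a \emph{single-variable} polynomial in $s=s_1+\dots+s_r$ with constant coefficients, rather than something depending on $x$ or on the $s_i$ individually.
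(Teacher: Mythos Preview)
The paper does not supply its own proof of this proposition: it is quoted verbatim as \cite[Proposition 3.4]{bmax}. So there is no in-paper argument to compare against.

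That said, your argument is correct and is essentially the proof one finds in \cite{bmax}. The three ingredients are exactly the expected ones: (i) $G$-invariance of $D_{\ul f}$ together with irreducibility and multiplicity one of $M_k$ force $D_{\ul f}$ to act on $M_k$ by a scalar $c(k)$ via Schur's lemma; (ii) the Leibniz expansion produces $D_{\ul f}\cdot\ul f^{\ul s}=R(\ul s,x)\,\ul f^{\ul s}$ with $R$ polynomial in $\ul s$, and specialization at $\ul k\in\nat^r$ shows $R(\ul k,x)=c(|\ul k|)$, whence $R$ is a polynomial in $s_1+\dots+s_r$ with constant coefficients by Zariski density of $\nat^r$; (iii) the resulting identity $P_{\ul f}(s)\,\ul f^{\ul s}=\sum_i f_i^*(\partial x)\cdot(f_i\,\ul f^{\ul s})$ is literally a functional equation of the type defining $b_{\ul f}$ in \cite{bideal}, giving the divisibility. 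Your care in the middle step (showing the coefficients of $Q$ are constants, then identifying $R(\ul s,x)$ with $Q(s_1+\dots+s_r)$ via vanishing on $\nat^r$) is appropriate and complete.
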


As in the case $r=1$ (by Theorem \ref{thm:bfun}), we conjecture that for multiplicity-free tuples $\ul{f}$ we always have equality $b_{\ul{f}}(s)=P_{\ul{f}}(s)$. In \cite{bmax} this has been shown to be the case when $I$ is the ideal generated by maximal minors or the ideal generated by sub-maximal Pfaffians. We can also consider powers of ideals $I^d$, for positive integers $d$, as follows. 

Let $M_{m,n}$ be the space of $m\times n$ matrices with $m\leq n$. Let $X$ be the $m\times n$ generic matrix of indeterminates and denote by $I$ the ideal of $\complex[V]$ generated by all the $n\times n$ minors of $X$.

\begin{theorem}\label{thm:powermax}
Let $I^d$ the power of the ideal $I$ generated by maximal minors for some $d\in \nat$. Then the Bernstein-Sato polynomial of $I^d$ is
\[b_{I^d}(s) = \prod_{i=n-m+1}^n \prod_{j=0}^{d-1} \left(s+\dfrac{i+j}{d}\right).\]
\end{theorem}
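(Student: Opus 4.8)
The plan is to reduce the computation of $b_{I^d}(s)$ to the setting of Proposition~\ref{prop:pf} by exhibiting a multiplicity-free tuple whose entries are the generators of $I^d$. First I would identify the relevant group action: on $V = M_{m,n}$ with $m \le n$, take $G = \GL_m \times \GL_n$ acting by $(A,B)\cdot X = A X B^{-1}$. The $n\times n$ minors of $X$ are the $\binom{n}{n}\cdot\binom{n}{m}$... more precisely, the maximal minors (choosing $m$ columns, since $m \le n$ — here one should be careful that ``$n\times n$ minors'' in the statement means the size-$n$... actually size-$m$, the maximal square minors) span an irreducible $\GL_m\times\GL_n$-subrepresentation of $\complex[V]$ isomorphic to $(\det_m)^{1}\otimes \Lambda^m(\complex^n)^*$, and this representation occurs with multiplicity one by Cauchy's formula for $\complex[M_{m,n}] = \bigoplus_\lambda S_\lambda(\complex^m)^*\otimes S_\lambda(\complex^n)^*$. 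The $d$th power of this tuple, whose entries are the degree-$d$ monomials in the maximal minors, therefore generates $I^d$ and is again multiplicity-free (the $k$th graded piece of the tuple-powers spans the irreducible $S_{(dk,\dots,dk)}$-isotypic line). So Proposition~\ref{prop:pf} applies to the tuple of generators of $I^d$, and $b_{I^d}(s)$ divides the polynomial $P(s)$ defined by $D \cdot \ul{g}^{\ul{s}} = P(s)\,\ul{g}^{\ul{s}}$, where $\ul{g}$ ranges over the degree-$d$ monomials in maximal minors and $D = \sum g_i^*(\partial x) g_i(x)$.

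Next I would compute $P(s)$ explicitly, or rather relate it to the known $d=1$ case. For $d=1$ the ideal $I$ of maximal minors has $b_I(s) = \prod_{i=n-m+1}^{n}(s+i)$ — this is the Bernstein-Sato polynomial established in \cite{bmax} via the equality $b_I = P_I$ for maximal minors, and it is exactly the formula above with $d=1$. The key structural input for general $d$ is a \emph{functional-equation / Capelli-type identity}: the invariant operator $D_{\ul g}$ acting on $\ul g^{\ul s}$ should factor through the $d=1$ operator in a way that multiplies the shift by $d$ and introduces the inner product over $j=0,\dots,d-1$. Concretely, I expect an argument along the lines of: restricting to the open set where the maximal minors don't all vanish, $\ul g^{\ul s}$ transforms under $G$ by the character $(\det_m)^{ds}$ (up to the $\GL_n$-part), and the $G$-invariant operator $D_{\ul g}$ is a polynomial in the ``Capelli element'' — the generator of the centre of $U(\mathfrak{gl}_m)$-type invariants — whose eigenvalue on the character $(\det_m)^{t}$ is the classical product $(t+1)(t+2)\cdots(t+m)$-style expression. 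Substituting $t = ds + (\text{shift})$ and iterating the tuple-structure produces precisely $\prod_{i=n-m+1}^n\prod_{j=0}^{d-1}(s + (i+j)/d) = \prod_{i=n-m+1}^n\prod_{j=0}^{d-1} \frac{ds+i+j}{d}$.

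Then I would argue the reverse divisibility, i.e. that every claimed root is actually a root of $b_{I^d}(s)$, so that $b_{I^d}(s) = P(s)$ and the divisibility from Proposition~\ref{prop:pf} is an equality. One route: use the behaviour of Bernstein-Sato polynomials of ideals under the integral closure / Rees-algebra correspondence, or under restriction to generic hyperplane sections, to reduce the lower bound on roots to the $d=1$ case plus the known relation $b_{I^d}(s)$ and $b_I(ds + \text{something})$; alternatively, invoke the explicit $V$-filtration / local cohomology computation (the multiplicities of the GKZ-type holonomic module attached to $I^d$ are governed by the same partitions $S_{(dk,\dots,dk)}$, and the jumping numbers of $I^d$ are $d$ times those of $I$, which pins the roots). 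Since \cite{bmax} already handles $d=1$, the cleanest phrasing is probably: $b_{I^d}(s)$ and $b_I(s)$ are related by $b_{I^d}(s) = \prod_{j=0}^{d-1} b_I^{(j/d\text{-shifted})}$, a statement that can be read off from either the microlocal structure or directly from the operator identity above.

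The main obstacle I anticipate is establishing the \emph{equality} $b_{I^d}(s) = P(s)$ rather than just divisibility — Proposition~\ref{prop:pf} and the Capelli-type computation only give the upper bound $b_{I^d}(s) \mid P(s)$, and the paper itself flags this as conjectural for general multiplicity-free tuples. Here, though, we are in the favourable case built on maximal minors, for which \cite{bmax} proved the equality at $d=1$; so the real work is to propagate that equality to all $d$, which I would do by producing explicit $b$-operators realizing each root (e.g. by exhibiting, for each $i$ and $j$, a section of the relevant local cohomology or a generalized Bernstein operator that forces $s = -(i+j)/d$ to be a root), or by a deformation argument relating $I^d$ to $I$ via the blow-up. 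A secondary, more technical point is verifying carefully that the ``$d$th power of a multiplicity-free tuple'' construction really yields $I^d$ as an ideal (not just the Veronese of the algebra) and that the associated operator $D$ computes $b_{I^d}$ and not the $b$-function of the $d$-Veronese — this is a matter of matching \cite[Definition 3.3]{bmax} with the definition of $b_{I^d}$ from \cite{bideal}, and should follow from the fact that $b$-functions of ideals only depend on the integral closure.
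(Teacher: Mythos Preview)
Your proposal follows essentially the same approach as the paper: form the $d$th power of the multiplicity-free tuple of maximal minors, apply Proposition~\ref{prop:pf} to get $b_{I^d}(s)\mid P_{\ul f}(s)$, compute $P_{\ul f}(s)$, and then upgrade the divisibility to equality. The paper is simply more direct on the two steps you treat speculatively: for the computation of $P_{\ul f}(s)$ it cites the Fourier transform argument of \cite[Theorem~3.5]{bmax} or the Schur's Lemma reduction to \cite[Theorem~3.3]{sasu} (rather than a Capelli-type identity), and for the equality $b_{I^d}=P_{\ul f}$ it asserts that the $d=1$ argument of \cite[Section~4]{bmax} carries over \emph{mutatis mutandis} to arbitrary $d$, rather than any of the integral-closure, $V$-filtration, or deformation routes you sketch.
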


\begin{proof}
Consider the tuple formed by all maximal minors, which is a multiplicity-free tuple (see \cite{bmax}) by the FFT (see \cite[XI. Section 1.2]{proc}). Denote by $\ul{f}$ the $d$th power of this tuple as explained above, so $\ul{f}$ is also multiplicity-free, hence Proposition \ref{prop:pf} applies. One can obtain $P_{\ul f}(s)$ in several ways. For example, we can apply either the method in the proof of \cite[Theorem 3.5]{bmax} using the Fourier transform, or observe that by Schur's Lemma the polynomial $P_{\ul f}(s)$ is the same as the one computed in \cite[Theorem 3.3]{sasu} -- see also proof of Lemma \ref{lem:firstfac}. Hence, up to a constant we have (see Notation \ref{not:notation})
\[P_{\ul f}(s)=[s]^d_{n-m,n}.\]
To see that $b_{\ul{f}}(s)=P_{\ul{f}}(s)$, we note that the proof for the case $d=1$ from \cite[Section 4]{bmax} carries over, \textit{mutatis mutandis}, for an arbitary $d\in\nat$.
\end{proof}

\section{Slices and the multiplicity one property}
\label{sec:slice}

In this section, we develop several techniques for calculating $b$-functions. These are similar to the methods used in \cite{sasu,ukai,wach}. The slice method developed in Section \ref{subsec:method} will be used further in Section \ref{sec:quiv}.

\subsection{Slices}\label{subsec:slice} 

Let $H$ be a connected affine algebraic group and $V$ a rational $H$-module. Let $f\in \complex[V]$ be a non-zero $H$-semi-invariant of weight $\sigma$. Denote by $\mathfrak{h}$ the Lie algebra of $H$. Fix an element $v\in V$ and let $H_v$ be the stabilizer of $v$. The tangent space at $v$ to the orbit $\orb=G\cdot v$ of $v$ is $T_v(\orb)=\mathfrak{h}\cdot v$, on which $H_v$ acts naturally. By a theorem of Mostow \cite{mostow}, we can write $H_v= L_v \ltimes U_v$, where $U_v$ is the unipotent radical of $H_v$ and $L_v \cong H_v/U_v$ is reductive. Let $W$ be an $L_v$-complement to $T_v(\orb)$ in $V$, so that we have an $L_v$-decomposition $V=T_v(\orb)\oplus W$. We call $(L_v,W)$ the \textit{slice representation} at $v$.

Given a polynomial $f\in \complex[V]$, we construct a polynomial $f_v \in \complex[W]$ defined by $f_v(w):=f(v+w)$ for $w\in W$. This gives an algebra map from $\complex[V]$ to $\complex[W]$ given by $f \mapsto f_v$.
 
Now if $f\in \complex[V]$ is a $H$-semi-invariant of weight $\sigma$, then $f_v\in\complex[W]$ is a $L_v$-semi-invariant of weight $\sigma|_{L_v}$. Hence the map $f \mapsto f_v$ induces the maps
\begin{equation}\label{eq:slice}
\phi_v : \op{SI}(H,V) \to \op{SI}(L_v,W)\,\, , \,\, \phi_v^\sigma : \op{SI}(H,V)_{\sigma} \to \op{SI}(L_v,W)_{\sigma |_{L_v}}.
\end{equation}
As in \cite{ukai}, we consider the map 
$$\mu: H\times W \to V,$$
$$\mu(h,w)=h(v+w).$$
Computing the differential at the identity of $H$, we see that $\mu$ is a smooth map. In particular, the algebra map $\mu^*: \complex[V] \to \complex[H]\otimes\complex[W]$ is injective. The map separates variables for a semi-invariant $f$ of weight $\sigma$, for we have  
\begin{equation}
\mu^*(f)=\sigma^{-1}\otimes f_v.
\label{eq:basic}
\end{equation}
By the above discussion we obtain the following lemma (see also \cite[p. 57]{ukai}):

\begin{lemma}\label{lem:basic}
The map $\phi_v^\sigma$ is injective. Moreover, if $f$ is a semi-invariant of $(H,V)$ then $b_{f,v} = b_{f_v,0}$, that is, the local $b$-functions of $f$ at $v$ and of $f_v$ at $0$ coincide. In particular, if $f_v$ is homogeneous then $b_{f_v}|b_f$.
\end{lemma}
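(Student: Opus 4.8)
The plan is to extract everything from the separation-of-variables identity \eqref{eq:basic}, namely $\mu^*(f) = \sigma^{-1}\otimes f_v$, together with the fact that $\mu$ is a smooth (hence faithfully flat in the relevant local sense) morphism. First I would prove injectivity of $\phi_v^\sigma$: if $f\in\op{SI}(H,V)_\sigma$ lies in the kernel, then $f_v=0$, so $\mu^*(f)=\sigma^{-1}\otimes f_v=0$; since $\mu^*$ is injective (which follows because $\mu$ is dominant, being smooth with image containing a neighborhood of $v$ — concretely, $d\mu$ at $(e,0)$ surjects onto $\mathfrak h\cdot v \oplus W = V$), we get $f=0$. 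This handles the first assertion.

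Next I would establish the equality of local $b$-functions $b_{f,v}=b_{f_v,0}$. The key is that smoothness of $\mu$ makes $\mu^*$ a flat ring map on the relevant local rings, and that smooth morphisms pull back and push down Bernstein–Sato polynomials without change. Concretely, I would compare the two functional equations. Starting from a relation $P(s)\cdot f^{s+1}=b(s)f^s$ valid in $D_v[s]$, one pulls back along $\mu$: because $\mu$ is smooth, differential operators on $V$ regular at $v$ pull back to differential operators on $H\times W$ regular at $(e,0)$, and using \eqref{eq:basic} the factor $\sigma^{-1}$ (a unit, since it is a nowhere-vanishing function on $H$ near $e$) can be absorbed, yielding a relation $\widetilde P(s)\cdot f_v^{s+1}=b(s) f_v^s$ in $D_{(e,0)}[s]$ on $H\times W$. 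Since $H\times W\to W$ is a smooth (trivial) fibration and the equation involves only $f_v$, which is pulled back from $W$, one descends to a relation on $W$ near $0$, giving $b_{f_v,0}\mid b_{f,v}$. For the reverse divisibility I would run the same argument in the opposite direction: any relation for $f_v$ near $0\in W$ pulls back to $H\times W$, and then I would need to push it forward along the smooth map $\mu$ to $V$ near $v$ — here one uses that $\mu$ has local sections (a smooth surjection onto a neighborhood of $v$, or more precisely one uses the slice $\{e\}\times W\hookrightarrow H\times W\to V$, whose image is transverse to the orbit) so that a differential-operator identity on the source restricts to one downstairs. This gives $b_{f,v}\mid b_{f_v,0}$, and combined with the previous divisibility, equality. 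Alternatively, and perhaps more cleanly, I would invoke the standard fact (e.g. from \cite{kashi}) that local $b$-functions are preserved under smooth base change together with the product decomposition $D_{(e,0)}=D_e\,\widehat\otimes\, D_0$ on $H\times W$, in which the $H$-direction is inert because $f_v$ does not involve those variables.

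Finally, the last clause is immediate: if $f_v$ is homogeneous, then by the remark in Section \ref{subsec:bern} (quoting \cite[Lemma 2.5.3]{gyoja}) we have $b_{f_v,0}(s)=b_{f_v}(s)$; and the local $b$-function always divides the global one, $b_{f,v}\mid b_f$, so stringing the equalities together gives $b_{f_v}=b_{f_v,0}=b_{f,v}\mid b_f$.

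I expect the main obstacle to be the bookkeeping in the smoothness argument: making precise that a functional equation for $f_v$ alone on $W$ is \emph{equivalent} (not just one-way implied) to one for $f$ near $v$ on $V$, i.e. handling both divisibilities symmetrically. The forward direction (restricting to the slice $\{e\}\times W$) is routine, but the backward direction — going from a relation near $v\in V$ down to one near $0\in W$ — requires genuinely using flatness/smoothness of $\mu^*$ so that the $b$-function, as the monic generator of an ideal in $\complex[s]$, does not shrink under the faithfully flat extension. This is exactly the content of the cited results in \cite{ukai,kashi}, so in the write-up I would lean on those rather than reprove flat descent of $b$-functions from scratch.
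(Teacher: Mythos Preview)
Your proposal is correct and follows essentially the same approach as the paper, which does not spell out a proof but simply says the lemma follows ``by the above discussion'' together with a reference to \cite[p.~57]{ukai}; you have accurately reconstructed that discussion, using the injectivity of $\mu^*$ and the separation-of-variables identity \eqref{eq:basic} for the first claim, and the invariance of local $b$-functions under smooth base change for the second. Your treatment is in fact more detailed than the paper's, but the underlying ideas coincide.
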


\begin{remark}\label{rem:works}
We note that in some situations one can choose algebraic groups (with corresponding complements $W$) different from $L_v$ and still make the above considerations work.
\end{remark}

\subsection{Expansions and the multiplicity one property}\label{subsec:mult1}

We recall and generalize some considerations from \cite{sasu}. Let $G$ be a (connected) reductive group with a Borel subgroup $B$ that contains a maximal torus $T$. The irreducible rational $G$-modules are parameterized by dominant $T$-weights. Let $V$ an algebraic $G$-module, and fix $f\in \op{SI}(G,V)_\sigma$ with $\sigma$ \textit{multiplicity-free} as in Definition \ref{def:multfree}. Then $f$ is homogeneous, say of degree $d>0$. Take any integer $k$ with $0< k < n$. We have a $G$-equivariant map 
\[ \complex[V]_k \otimes \complex[V]_{d-k} \to \complex[V]_d.\]
The polynomial $f$ lies in the image of this onto map. Decomposing $\complex[V]_k$ (resp. $\complex[V]_{d-k}$) into irreducible $G$-modules and using that the multiplicity of $\sigma$ in $\complex[V]$ is one, we see that there exits an irreducible $G$-submodule $M_\lambda$ of $\complex[V]_k$ (resp. $M_{\lambda^*\cdot \sigma}$ of $\complex[V]_{d-k}$) such that $f$ is in the image of the multiplication map
\[M_\lambda \otimes M_{\lambda^*\cdot\sigma} \to \complex[V]_d.\]
Here $M_\lambda$ (resp. $M_{\lambda^*\cdot\sigma}$) is an irreducible $G$-module of highest weight $\lambda$ (resp. $\lambda^* \cdot \sigma$) for some dominant weight $\lambda$, and $M_{\lambda^*\cdot \sigma}$ is $G$-isomorphic to the dual space of $M_\lambda$ tensored with the character $\sigma$. Take a basis $f_1^{(1)},\dots, f_p^{(1)}$ of $M_\lambda$, and take a $G$-dual basis $f_1^{(2)},\dots,f_p^{(2)}$ of $M_{\sigma\cdot\lambda^*}$. The above shows that we have an \textit{expansion} (up to non-zero constant)
\begin{equation}\label{eq:expand}
f(x) = \ds_{i=1}^p  f_i^{(1)}(x)f_i^{(2)}(x).
\end{equation}

In order to determine $M_\lambda \subset \complex[V]_k$ for some fixed $k$, we discuss the following typical examples. 

\begin{example}\label{ex:subvar}
Let $f_1^{(1)},\dots, f_p^{(1)}$ be a basis of an irreducible submodule $M_\lambda$ of $\complex[V]_k$. If $f$ lies in the ideal generated by $f_1^{(1)},\dots, f_p^{(1)}$ in  $\complex[V]$, then we have an expansion (\ref{eq:expand}) as above. Geometrically, if $f_1^{(1)},\dots, f_p^{(1)}$ generate the (reduced) defining ideal of a closed subset of the zero-set $Z(f)$, then we have an expansion (\ref{eq:expand}) as above.
\end{example}

\begin{example}\label{ex:main}
The case considered in \cite{sasu} is when $V$ is reducible, that is, there is a non-trivial $G$-decomposition $V=E\oplus F$. Then $\complex[V]=\complex[E]\otimes \complex[F]$, and we can choose $M_\lambda$ (resp. $M_{\lambda^*\cdot \sigma})$ to be a $G$-irreducible isotypic component $\complex[E]_\lambda$ (resp. $\complex[F]_{\lambda^*\cdot \sigma}$), for a unique dominant weight $\lambda$ (see \cite[Proposition 1.6]{sasu}). We remark that in \cite{sasu} the roles of $E$ and $F$ are interchanged.
\end{example}

Since $G$ is reductive, the constructions above can be obtained for $\complex[V^*]$ as well (see also \cite{sasu}). Namely, let $f^*\in \complex[V^*]_d$ be the dual semi-invariant of $f$, which then has multiplicity-free weight $\sigma^{-1}$. Under the assumptions above, there exists an irreducible $G$-submodule $N_{\lambda^*}$ of $\complex[V^*]_k$ that is $G$-isomorphic to the dual of $M_{\lambda}$, and an irreducible $G$-submodule $N_{\lambda\cdot \sigma^{-1}}$ of $\complex[V^*]_{d-k}$ that is $G$-isomorphic to the dual of $M_{\lambda^*\cdot \sigma}$, such that $f^*$ is in the image of the map
\[N_{\lambda^*} \otimes N_{\lambda \cdot \sigma^{-1}} \to \complex[V]_d.\]
 Then we have an expansion of the form
\[f(x^*) = \ds_{i=1}^p  f_i^{*(1)}(x^*)f_i^{*(2)}(x^*),\]
for $x^* \in V^*$. Here we can take  $f_1^{*(1)},\dots, f_p^{*(1)}$ (resp. $f_1^{*(2)},\dots,f_p^{*(2)}$) to be a basis of $N_{\lambda^*}$ (resp. $N_{\lambda\cdot \sigma^{-1}}$) that is $G$-dual to $f_1^{(1)},\dots, f_p^{(1)}$ (resp. $f_1^{(2)},\dots,f_p^{(2)}$) with respect to the pairing \ref{eq:pair}.

As in \cite{sasu}, we assume that the following \textit{multiplicity one property} is satisfied: $\complex[V]_{\lambda\cdot \sigma^{d-k-1}}=M_\lambda\cdot f^{d-k-1}$, or equivalently:
\begin{equation}\label{eq:mult1}
\mbox{The multiplicity of the irreducible $G$-module of highest weight } \lambda \cdot \sigma^{d-k-1} \mbox{ in } \complex[V] \mbox{ is }1.
\end{equation}

We obtain the following generalization of \cite[Theorem 1.12]{sasu} (the proof is analogous):

\begin{theorem}\label{thm:mult1}
Let $f$ be a semi-invariant with multiplicity-free weight, and take an expansion (\ref{eq:expand}) as above. Assume that the multiplicity one property (\ref{eq:mult1}) holds. Then the $b$-function of $f$ decomposes as $b_f(s)=b_1(s)\cdot b_2(s)$ with:
\begin{enumerate}
\item[(1)]  $\,\,\left[\ds_{i=1}^p f^{*(1)}_i(\partial x) f^{(1)}_i (x)\right] \cdot f^{s}(x) = b_1(s) f^s(x),$
\item[(2)]  $\,\,f^{*(2)}_i(\partial x) \cdot f^{s+1}(x) = b_2(s) f^{(1)}_i (x) f^s(x)\,\,$ (for any  $i=1,\dots ,p$).
\end{enumerate}
\end{theorem}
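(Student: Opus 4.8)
The plan is to extract the decomposition $b_f(s) = b_1(s)\cdot b_2(s)$ from the single differential equation \eqref{eq:good} of Theorem \ref{thm:bfun} by factoring the operator $f^*(\partial x)$ through the intermediate semi-invariants supplied by the expansion \eqref{eq:expand}. The starting point is that, up to a non-zero constant, $f^*(x^*) = \sum_{i=1}^p f_i^{*(1)}(x^*) f_i^{*(2)}(x^*)$, so that $f^*(\partial x) = \sum_{i=1}^p f_i^{*(1)}(\partial x)\, f_i^{*(2)}(\partial x)$ as differential operators (here one uses that the $f_i^{*(1)}$ and $f_i^{*(2)}$ are polynomials in the $\partial x$'s, hence commute). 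Applying $f^*(\partial x)$ to $f^{s+1}$ in two stages, I would first understand what $f_i^{*(2)}(\partial x)\cdot f^{s+1}$ produces.

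First I would establish statement (2). Consider the operator $\sum_{i=1}^p f_i^{*(2)}(\partial x)\, \otimes\, f_i^{(1)}(x)$ acting on $\complex[V^*]\otimes\complex[V]$, or rather: I claim $f_i^{*(2)}(\partial x)\cdot f^{s+1}(x)$ lies in $\complex[s]\cdot f_i^{(1)}(x) f^s(x)$ for each $i$. The reason is a highest-weight/multiplicity argument. The element $f^{s+1}$ is, formally, a generator of a cyclic module on which $G$ acts through the character $\sigma^{s+1}$; applying the degree-$(d-k)$ operator $f_i^{*(2)}(\partial x)$ of weight $\sigma^{-1}\cdot\lambda$ lands us in the $\sigma^{s+1}\cdot\sigma^{-1}\cdot\lambda = \lambda\cdot\sigma^{s}$-isotypic part. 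Specializing $s \mapsto s'$ integers, or working in the graded setting where $f^{d-k-1}$ has a genuine meaning, the result must be proportional to $f_i^{(1)}\cdot f^s$ precisely because of the multiplicity one property \eqref{eq:mult1}: the space $\complex[V]_{\lambda\cdot\sigma^{d-k-1}}$ is spanned by $M_\lambda\cdot f^{d-k-1}$, i.e.\ by the $f_j^{(1)} f^{d-k-1}$, and equivariance of $f_i^{*(2)}(\partial x)$ together with the fact that $f_i^{*(2)}$ is $G$-dual to $f_i^{(2)}$ forces the coefficient to be the \emph{same} scalar $b_2(s)$ independent of $i$ (Schur's lemma). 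This defines $b_2(s)$, and it is a polynomial in $s$ since the construction is polynomial in $s$ after clearing denominators; that it is a genuine polynomial identity (not just valid at integers) follows as in the standard Bernstein--Sato formalism.

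Then statement (1) is essentially forced. Summing (2) over $i$ after applying $f_i^{*(1)}(\partial x)$ gives
\[
f^*(\partial x)\cdot f^{s+1}(x) = \sum_{i=1}^p f_i^{*(1)}(\partial x)\,f_i^{*(2)}(\partial x)\cdot f^{s+1}(x) = b_2(s)\sum_{i=1}^p f_i^{*(1)}(\partial x)\cdot\bigl(f_i^{(1)}(x) f^s(x)\bigr).
\]
Comparing with \eqref{eq:good}, which says the left side equals (a constant times) $b_f(s) f^s(x)$, we get
\[
\Bigl[\sum_{i=1}^p f_i^{*(1)}(\partial x)\, f_i^{(1)}(x)\Bigr]\cdot f^s(x) = \frac{b_f(s)}{b_2(s)}\, f^s(x),
\]
so the operator $\sum_i f_i^{*(1)}(\partial x) f_i^{(1)}(x)$—which is $G$-invariant, being built from a module and its dual basis—acts on $f^s$ by a scalar; by Schur's lemma applied to the irreducible module generated by $f^s$ this scalar is a well-defined rational function of $s$, and in fact the polynomial $b_1(s) := b_f(s)/b_2(s)$, provided I first check $b_2(s) \mid b_f(s)$. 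That divisibility is exactly what the displayed computation shows: $b_f(s) f^s = b_2(s)\cdot(\text{something})\cdot f^s$, and since $\sum_i f_i^{*(1)}(\partial x) f_i^{(1)}(x)$ is an honest operator in $D[s]$ acting on $f^s$, the something is a polynomial.

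The main obstacle I anticipate is making the weight/multiplicity argument for (2) fully rigorous — specifically, justifying that $f_i^{*(2)}(\partial x)\cdot f^{s+1}$ is proportional to $f_i^{(1)} f^s$ \emph{with an $i$-independent scalar}, and that this scalar is polynomial in $s$. The cleanest route is probably to reduce to integer values of $s$ (or to the associated graded/formal setting where $f^{d-k-1}\in M_\lambda\otimes\cdots$ literally), invoke \eqref{eq:mult1} to pin down the one-dimensional space of possible targets, use the $G$-duality of the bases $\{f_i^{(1)}\}$ and $\{f_i^{*(2)}\}$ together with Schur's lemma for the uniformity in $i$, and then appeal to the general principle (as in \cite[Section 2.5]{gyoja}) that a relation of this shape holding for infinitely many integers $s$ upgrades to a polynomial identity. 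Everything else — the factorization of the operator $f^*(\partial x)$, the $G$-invariance of the operator in (1), and the arithmetic $b_f = b_1 b_2$ — is then bookkeeping, and the statement that this is "analogous to \cite[Theorem 1.12]{sasu}" refers precisely to this bookkeeping being routine once (2) is in hand.
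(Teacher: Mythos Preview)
Your approach is correct and is precisely the argument the paper defers to \cite[Theorem~1.12]{sasu}. One sharpening of the point you flag as your ``main obstacle'': you need not specialize to integers or invoke an analytic-continuation principle. Applying the order-$(d-k)$ operator $f_i^{*(2)}(\partial x)$ to $f^{s+1}$ yields, by iterating the chain rule, an expression $f^{\,s+1-(d-k)}\cdot Q_i(s,x)$ with $Q_i\in\complex[s]\otimes\complex[V]$ a genuine polynomial in both $s$ and $x$. For every value of $s$ the assignment $f_i^{*(2)}\mapsto Q_i(s,\cdot)$ is a $G$-map from $N_{\lambda\cdot\sigma^{-1}}$ (twisted by $\sigma^{d-k}$) into $\complex[V]$, and its image therefore lands in the isotypic component of highest weight $\lambda\cdot\sigma^{d-k-1}$ --- the exponent $d-k-1$ comes from $(s+1)-(s+1-(d-k))$ and is \emph{independent of $s$}. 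This is exactly why the single multiplicity-one hypothesis \eqref{eq:mult1} suffices: Schur's lemma together with the choice of dual bases gives $Q_i(s,\cdot)=b_2(s)\,f_i^{(1)}f^{d-k-1}$ with $b_2(s)$ automatically a polynomial, and (2) follows immediately. Your derivation of (1) from (2) and the factorization $b_f=b_1b_2$ is then exactly as you describe.
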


\begin{remark}\label{rem:loc}
We note that if $v$ is any element in $V$ with $f_i^{(1)}(v)\neq 0$ (for some $i$) then equation (2) above is a candidate for giving the local $b$-function of $f$ at $v$. In other words, $b_{f,v}(s) | b_2(s)$. In fact, we will see that in some situations equality holds, and that $b_2(s)$ can be itself a $b$-function of a semi-invariant of lower degree -- see Sections \ref{subsec:exam}, \ref{subsec:method}.
\end{remark}

Now we discuss the $k=1$ case for Theorem \ref{thm:mult1} in more detail:

\begin{corollary}\label{cor:euler}
Assume $(G,V)$ is an irreducible prehomogeneous vector space and $f\in \complex[V]$ a semi-invariant of weight $\sigma$. Let $n=\dim V$ and $d=\deg f>1$, and assume the multiplicity of the irreducible representation $V^*\otimes \sigma^{d-2}$ in $\complex[V]$ is one. Then $-n/d$ is a root of $b_f(s)/(s+1)$.
\end{corollary}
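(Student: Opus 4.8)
The plan is to apply Theorem \ref{thm:mult1} with $k=1$ and then identify the factor $b_1(s)$ explicitly. First I would set up the expansion: since $(G,V)$ is an irreducible prehomogeneous vector space, $\complex[V]_1 = V^*$ is itself an irreducible $G$-module, so taking $k=1$ forces $M_\lambda = V^* = \complex[V]_1$ (with highest weight $\lambda$ the highest weight of $V^*$), and consequently $M_{\lambda^*\cdot\sigma} \subset \complex[V]_{d-1}$ is $G$-isomorphic to $V\otimes\sigma$. The hypothesis that the multiplicity of $V^*\otimes\sigma^{d-2}$ in $\complex[V]$ is one is exactly the multiplicity one property \eqref{eq:mult1} in this situation (with $d-k-1 = d-2$ and $M_\lambda = V^*$), so Theorem \ref{thm:mult1} applies and gives $b_f(s) = b_1(s)b_2(s)$.

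Next I would compute $b_1(s)$. With $f_1^{(1)},\dots,f_n^{(1)}$ a basis of $\complex[V]_1 = V^*$, we may take the $x_i$ themselves (in suitable coordinates) as this basis, and the $G$-dual basis $f_1^{*(1)},\dots,f_n^{*(1)}$ in $\complex[V^*]_1 \cong V$ is then (up to normalization) the dual coordinates $\partial_i$. Thus the operator in part (1) of Theorem \ref{thm:mult1} is, up to a nonzero constant, $\sum_{i=1}^n x_i \partial_i$ — the Euler operator — whose action on the homogeneous polynomial $f^s$ of degree $ds$ is multiplication by $ds$. Hence $b_1(s)$ is, up to a nonzero constant, equal to $ds = d(s+n/d)$, so $b_1(s)$ is a constant multiple of $(s+n/d)$. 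I should double-check the normalization constant coming from the choice of $G$-dual bases is harmless (it is, since $b_1$ is only determined up to such a constant, and $b_f$ is monic), and that $d>1$ guarantees $k=1<d$ so that the expansion genuinely has $d-k = d-1 \geq 1$ and part (2) is nonvacuous.

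Finally, I would combine: $b_f(s) = b_1(s)b_2(s)$ with $b_1(s)$ proportional to $(s+n/d)$ shows $-n/d$ is a root of $b_f(s)$. To get that it is a root of $b_f(s)/(s+1)$, I would invoke that $(s+1)$ always divides $b_f(s)$ (standard, since $f$ is not a unit) and that $-n/d \neq -1$: indeed $-n/d = -1$ would force $n = d$, but for an irreducible prehomogeneous vector space of dimension $n>1$ with a semi-invariant of degree $d$, one has $d < n$ (equivalently $d \neq n$ since $d \le n$ and equality is excluded — e.g. the generic isotropy is positive-dimensional, or directly $f$ cannot be linear as $d>1$ while $n=d$ with an irreducible action and a degree-$n$ semi-invariant is impossible). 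Since $(s+n/d)$ is then a factor of $b_f(s)$ coprime to $(s+1)$, it divides $b_f(s)/(s+1)$, proving the claim.

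The main obstacle I expect is the bookkeeping in the second step — verifying cleanly that the multiplication operator $\sum f_i^{*(1)}(\partial x) f_i^{(1)}(x)$ really is (a constant times) the Euler operator, i.e. that the $G$-dual basis of $V^*\subset\complex[V]$ inside $V\subset\complex[V^*]$ pairs up coordinatewise under \eqref{eq:pair} — and, relatedly, confirming $-n/d\neq -1$ in the stated generality rather than case by case.
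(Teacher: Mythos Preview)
Your overall strategy---apply Theorem~\ref{thm:mult1} with $k=1$ and identify $b_1(s)$ explicitly---is exactly the paper's approach. However, the execution contains a genuine error and an unnecessary detour.

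\textbf{The operator order is wrong.} In part (1) of Theorem~\ref{thm:mult1} the operator is $\sum_i f_i^{*(1)}(\partial x)\, f_i^{(1)}(x)$: one first multiplies by $f_i^{(1)}(x)$ and \emph{then} applies the differential operator $f_i^{*(1)}(\partial x)$. With $f_i^{(1)}=x_i$ and $f_i^{*(1)}=\partial_i$ this is $\sum_i \partial_i\, x_i = \sum_i (x_i\partial_i + 1) = E + n$, not the bare Euler operator $E$. Acting on $f^s$ gives $(E+n)f^s = (ds+n)f^s$, so $b_1(s)=ds+n$ and the root is $-n/d$. Your line ``$b_1(s)$ is \dots equal to $ds = d(s+n/d)$'' is simply false arithmetic ($d(s+n/d)=ds+n$, not $ds$) and signals that the ordering slip was papered over rather than caught.

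\textbf{The $(s+1)$ argument is an unnecessary detour.} You try to show $-n/d\neq -1$ by arguing $n\neq d$ for irreducible prehomogeneous spaces; the justification offered (``generic isotropy is positive-dimensional'') is not established in this generality and is in any case not needed. The paper instead observes that $-1$ is a root of $b_2(s)$: in part (2) of Theorem~\ref{thm:mult1}, set $s=-1$; since $d>1$ the operator $f_i^{*(2)}(\partial x)$ has positive order, so the left side vanishes while $f_i^{(1)}(x)f^{-1}(x)\neq 0$, forcing $b_2(-1)=0$. Thus $(s+1)\mid b_2(s)$ and $(s+n/d)\mid b_1(s)$, so $(s+n/d)(s+1)\mid b_f(s)$, and $-n/d$ is a root of $b_f(s)/(s+1)$ regardless of whether $-n/d$ equals $-1$.
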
 

\begin{proof}
The multiplicity one property (\ref{eq:mult1}) holds, where $k=1$ and $M_\lambda=V^* = \complex[V]_1$. By Theorem \ref{thm:mult1}, we have a decomposition $b_f(s) = b_1(s) \cdot b_2(s)$. Clearly, $-1$ is a root of $b_2(s)$, and $b_1(s)$ satisfies
\[\left(\ds_{i=1}^n \partial_{i} x_{i}\right) \cdot f^s = b_{1}(s) \cdot f^s.\]
The operator on the LHS equals $E+n$, where $E$ denotes the usual Euler operator.  Hence, we have $b_1(s)=ds+n$, proving our claim. 
\end{proof}

We note that for all irreducible prehomogeneous spaces considered in \cite{kimu}, $-n/d$ is indeed a root of the $b$-function, suggesting that the multiplicity-one property holds frequently among these (see examples in the next section).

\subsection{Examples of irreducible prehomogeneous spaces}\label{subsec:exam}

As explained in \cite[Section 3.1]{sasu}, the decomposition technique as in Example \ref{ex:main} can be used to obtain in an elementary way the $b$-functions of some classical (semi-)invariants such as the determinant and the Pfaffian. Previous proofs rely on sophisticated methods such as Capelli's identity (see \cite{howumed,proc}) or microlocal calculus (see \cite{kimu}). However, for the calculation of the $b$-function of the symmetric determinant, the technique as in Example \ref{ex:main} is not sufficient. As it turns out, considering more general expansions as (\ref{eq:expand})  is adequate for this purpose. Furthermore, in combination with methods from Section \ref{subsec:slice}, we obtain all the local $b$-functions of these classical invariants as well. For illustration, we now work out the case of the symmetric determinant and several others that do not arise from reducible representations as in Example \ref{ex:main}. These suggest that many $b$-functions of semi-invariants of prehomogeneous vector spaces can be computed with this method.  Further examples will be provided for semi-invariants of quivers (Section \ref{sec:quiv}). 

For the standard notation that we use for the representations below, cf. \cite{saki}. 

\begin{example} \label{ex:symdet} $(\GL(n), 2\Lambda_1)$, the symmetric determinant.

We can think of elements $M\in V=\op{Sym}^2 \complex^n$ as symmetric matrices $M=M^t$, on which the action of $G=\GL(n)$ is given by $g\cdot M = gMg^t$. The semi-invariant $f$ is given by $f(M)=\det(M)$ and has degree $n$. We note that $V$ is a multiplicity-free space (cf. \cite{howumed}), i.e. $\complex[V]$ has $G$-irreducible isotypic components. In particular, $f$ has multiplicity-free weight $\sigma=\det^2$.

We have $n+1$ orbits $\orb_0,\orb_1,\dots,\orb_n$ in $V$ under the action of $G$, where $\orb_i$ denotes the set of symmetric matrices of rank $i$. Fix any integer $k$ with $0<k<n$. The defining ideal of $\overline{\orb}_{k-1}$ is generated by the $k\times k$ minors $f_1^{(1)},\dots, f_p^{(1)}$  of the generic symmetric matrix $X$ of variables (for example, see \cite[Theorem 6.3.1]{jerzy}), and these form a basis for an irreducible $G$-submodule $M_\lambda$ of $\complex[V]$, where $\lambda$ is given by the partition $(2^k,0,\dots,0)$. Since $V$ is a multiplicity-free space, the multiplicity one property (\ref{eq:mult1}) holds. We have $\overline{\orb}_{k-1} \subset Z(f)= \overline{\orb}_{n-1}$, so by Example \ref{ex:subvar} we have a (Laplace) expansion of the form (\ref{eq:expand}). By Theorem \ref{thm:mult1}, the $b$-function of $f$ decomposes as $b_n(s)=b_{k,1}(s)\cdot b_{k,2}(s)$, and for any $i=1,\dots,p$ we have the equation
\begin{equation}\label{eq:loc}
\left(\dfrac{1}{f^{(1)}_i (x)} \, f^{*(2)}_i(\partial x) \right) \cdot f^{s+1}(x) = b_{k,2}(s) f^s(x).
\end{equation}
We can choose $f_1^{(1)}$ (resp. $f_1^{*(2)}$) to be $k\times k$ (resp. $(n-k) \times (n-k)$) minor formed by the first $k$ (resp. last $n-k$) rows and columns. We consider the equation (\ref{eq:loc}) with $i=1$, and specialize at 
\[X=\begin{bmatrix}
I_k & 0\\
0 & X_{n-k}
\end{bmatrix},\]
where $X_{n-k}$ is the generic $(n-k)\times (n-k)$ matrix of respective variables. This readily gives the equation for the $b$-function of the symmetric determinant of size $(n-k)\times (n-k)$, hence we obtain $b_{k,2}(s)=b_{n-k}(s)$, and we have the decomposition 
\[b_n(s) = b_{k,1}(s)\cdot b_{n-k}(s).\]
To determine $b_n(s)$ (and a fortiori, all $b_{k,1}(s)$), we consider the case  $k=1$. By Corollary \ref{cor:euler} we have $b_{1,1}(s)=ns + \frac{n(n-1)}{2}$, and we can write (up to a non-zero constant)
\[b_n(s)=\left(s+\frac{n+1}{2}\right) \cdot b_{n-1}(s) =  (s+1)\left(s+\frac{3}{2}\right)\cdots \left(s+\frac{n+1}{2}\right).\]
 % and we can take $v=e_1^2$ as a highest weight vector, with $d=1$. The slice will give $(\GL_{n-1}, 2\lambda_1)$, and condition (\ref{eq:condi}) is satisfied. We have $r=\frac{n+1}{2}$. Hence the $b$-function is $b_n(s)=(s+\frac{n+1}{2})b_{n-1}(s)= (s+1)(s+\frac{3}{2})\cdots (s+\frac{n+1}{2})$.
Now we show that the equations (\ref{eq:loc}) give local $b$-functions at elements in $\orb_k$. Clearly, if $v\in \orb_k$ then there is an $i$ such that $f_i^{(1)}(v)\neq 0$, and (\ref{eq:loc}) shows that the local $b$-function $b_{f,v}(s)$ divides $b_{k,2}(s) = b_{n-k}(s) = (s+1)(s+3/2)\cdots (s+\frac{n-k+1}{2}).$ To see that equality holds, by equivariance we have $b_{f,v} = b_{f,gv}$, for any $g\in G$, which we can denote by $b_{f,\orb_k}$. So it is enough to consider the element $v=\begin{bmatrix} I_k & 0 \\ 0 & 0 \end{bmatrix}$. If we take the slice at $v$ as in Section \ref{subsec:slice}, we get a decomposition $V=\mathfrak{g}v \oplus W$, where we can identify $W$ with the space of $(n-k)\times (n-k)$ symmetric matrices. The induced semi-invariant $f_v$ is the symmetric determinant on $W$. By Lemma \ref{lem:basic}, we have $b_{f,\orb_k}=b_{f,v}(s) = b_{f_v}(s)=b_{n-k}(s)$, hence obtaining the desired equality. We will exploit techniques with slices more systematically in the next section.
\end{example}

\begin{example}\label{ex:ortho} $(\op{SO}(m)\times \GL(n),  \Lambda_1 \otimes \Lambda_1)$, where $m>n$.

This example is also considered in \cite{skko} (although we require only $m>n$). Here $G=\op{SO}(m)\times \GL(n)$, where $\op{SO}(m)$ denotes the special orthogonal group. We think of $V$ as the space of $m\times n$ matrices with the action of $G$ defined by $(h,g) \cdot M = h\cdot M \cdot g^t$, where $h \in \op{SO}(m), g\in \GL(n)$ and $M\in V$. We have a semi-invariant $f_{m,n}$ defined by $f_{m,n}(M) =\det(M^t \cdot M)$ of degree $2n$ and with weight $\sigma = 1 \otimes \det^2$. Since $G$ acts on $V$ with finitely many orbits (see \cite{skko}), $\sigma$ is multiplicity-free. The orthogonal invariants are generated by the entries of $X^t \cdot X$, where $X$ denotes an $m\times n$ generic matrix of variables (see \cite[XI. Section 2.1]{proc}). In fact, this induces a $\GL(n)$-equivariant algebra isomorphism (see \cite[XI. Section 5.2]{proc})
\[\complex[V]^{SO(m)} \cong \complex[\op{Sym}^2 \complex^n].\]
In particular, we have a (Laplace) expansion (\ref{eq:expand}) as in the previous example if we take $M_\lambda$ to be the span of all the $r\times r$ minors $f_1^{(1)},\dots,f_p^{(1)}$ of $X^t\cdot X$  for any $r$ with $1\leq r \leq n-1$, where $\lambda = 1 \otimes (2^r,0,\dots,0)$. Moreover, the above isomorphism shows that the multiplicity one property (\ref{eq:mult1}) holds. By Theorem \ref{thm:mult1} the $b$-function $b_{m,n}(s)$ of $f_{m,n}$ has a decomposition $b_{m,n}(s)= b'_{r,1}(s)\cdot b'_{r,2}(s)$, where $b'_{r,2}(s)$ satisfies the equation
\[\left(\dfrac{1}{f^{(1)}_1 (x)} \, f^{*(2)}_1(\partial x) \right) \cdot f_{m,n}^{s+1}(x) = b'_{r,2}(s) f_{m,n}^s(x).\]
Here $f^{(1)}_1$ (resp. $f^{*(2)}_1$) is the $r\times r$ (resp. $(n-r)\times (n-r)$) minor formed by the first $r$ (resp. last $n-r$) rows and colums of $X^t \cdot X$ (resp. that in dual variables). Specializing the equation above at 
\[X=\begin{bmatrix}
I_r & X_r\\
0 & X_{n-r}
\end{bmatrix},\]
and simplifying $f_{m,n}$, we obtain precisely the equation for the $b$-function of the semi-invariant $f_{m-r,n-r}$ in the variables of $X_{n-r}$. Hence $b'_{r,2}(s)=b_{m-r,n-r}(s)$, and we have a decomposition 
\[b_{m,n}(s) = b'_{r,1}(s) \cdot b_{m-r,n-r}(s).\]
To compute $b_{m,n}(s)$ (hence, a fortiori all $b'_{r,1}(s)$ as well), we choose $r=1$. In this case $f_1^{(1)},\dots,f_p^{(1)}$ are just the entries of $X^t\cdot X$ and $f_1^{*(1)},\dots,f_p^{*(1)}$ the respective dual elements.  By Theorem \ref{thm:mult1}, $b'_{1,1}(s)$ is given by
\[\left[\ds_{i=1}^p f^{*(1)}_i(\partial x) f^{(1)}_i (x)\right] \cdot f_{m,n}^{s}(x) = b'_{1,1}(s) f_{m,n}^s(x).\]
Since this is involves only a 2nd-order differential operator, by a direct computation we obtain (up to constant) that $b'_{1,1}(s) = (s+ \frac{n+1}{2}) (s+ \frac{m}{2})$. Hence we get
\[b_{m,n}(s) = \left(s+ \frac{n+1}{2}\right)\left(s+ \frac{m}{2} \right) \cdot b_{m-1,n-1}(s) = \prod_{i=1}^n \left(s+ \frac{i+1}{2} \right) \left(s+ \frac{m-n+i}{2}\right).\]
\end{example}

\begin{example}\label{ex:symp}  $(\op{Sp}(2m) \times \GL(2n), \Lambda_1 \otimes \Lambda_1)$, where $m>n$.

This example appears also in \cite{kimu} (although we require only $m>n$). Again, we think of $V$ as the space of $2m\times 2n$ matrices. The semi-invariant is the Pfaffian of $f(M)=\op{Pf}(M^t \cdot J \cdot M)$, where $J=\begin{bmatrix} 0 & -I_m \\ I_m & 0 \end{bmatrix}$. The argument is entirely analogous to the previous example, so we omit the details. For each $r$, we obtain a decomposition of the $b$-function of $f$ as $b_{m,n}(s) = b'_{r,1}(s) \cdot b_{m-r,n-r}(s)$. Putting $r=1$, we obtain
\[b_{m,n}(s) =(s+2n-1)(s+2m)\cdot b_{m-1,n-1}(s) = \prod_{i=1}^n \left(s+ 2i-1\right)\left(s+2(m-n+i) \right).\]
\end{example}

\begin{example}\label{ex:cubic} $(\GL(2), 3\Lambda_1)$, the space of binary cubics.

This example appears also in \cite{skko}. Here $V=\op{Sym}^3 \complex^2$ is the space of binary cubic forms with the natural action of $G=\GL(2)$. If we choose $w_0,w_1$ to be a basis of $\complex^2$, then we choose the basis $\{w_0^3, 3w_0^2 w_1, 3w_0 w_1^2, w_1^3\}$ for $V$. Let $x=(x_0, x_1, x_2,x_3)$ the respective coordinate system. The semi-invariant $f\in \complex[V]_4$ is the discriminant 
\[ f = 3x_1^2x_2^2-4x_0x_2^3-4x_1^3x_3-x_0^2x_3^2+6x_0 x_1 x_2 x_3.\]
Since $V$ has only $4$ orbits under the action of $G$, the weight $\sigma = \det^6$ is multiplicity-free. For each $k$ with $0<k<4$, we describe the expansion (\ref{eq:expand}) and show that in each case the multiplicity one property (\ref{eq:mult1}) holds. To this end, we use the $G$-decomposition of $\complex[V]$ described as rational function in \cite[Section 6.1]{series} (we follow the notation as in \cite[Lemma 2.1]{binary})
\begin{equation}\label{eq:binpol}
\op{Sym}(\op{Sym}^3 \complex^2) = \dfrac{1+(6,3)}{(1-(3,0)(1-(4,2))(1-(6,6))},
\end{equation}
where irreducible $G$-modules correspond to pairs of integers $(a,b)$ with $a\geq b$.

When $k=1$, then we have a decomposition (\ref{eq:expand}) with $M_\lambda = \complex[V]_1$ so that $\lambda=(3,0)$ and $\lambda^* \cdot \sigma = (0,-3) + (6,6) = (6,3)$. By (\ref{eq:binpol}) we see that the multiplicity of $\lambda \cdot \sigma^2 = (3,0)+(12,12) = (15,12)$ in $\complex[V]$ is one. Hence (\ref{eq:mult1}) holds, and by Theorem \ref{thm:mult1} we have a decomposition for the $b$-function $b(s)$ of $f$ as $b(s)=b_{1,1}(s) \cdot b_{1,2}(s)$. By Corollary \ref{cor:euler}, we have (up to a constant) $b_{1,1}(s)=s+1$. Since the equation for $b_{1,2}(s)$ involves only a 3rd-order differential operator, one can obtain by a direct calculation that $b_{1,2}(s)=(s+1)(s+5/6)(s+7/6)$.

When $k=2$. we can take $\lambda=(4,2)$ and $\lambda^* \cdot \sigma = (4,2)$. We see from (\ref{eq:binpol}) that the multiplicity of $\lambda \cdot \sigma^1=  (10,8)$ in $\complex[V]$ is one. Hence (\ref{eq:mult1}) holds, and by Theorem \ref{thm:mult1} we have a decomposition $b(s) = b_{2,1}(s) \cdot b_{2,2}(s)$. We give more details for this case. A basis of $M_\lambda=M_{\lambda^*\cdot \sigma}$ (resp. basis of $N_{\lambda^*}=N_{\lambda \cdot \sigma^{-1}}$) is given by the $2\times 2$ minors of
\[\begin{bmatrix}
x_0 & x_1 & x_2\\
x_1 & x_2 & x_3
\end{bmatrix}, \mbox{ resp. }
\begin{bmatrix}
3\partial_0 & \partial_1 & \partial_2\\
\partial_1 & \partial_2 & 3\partial_3
\end{bmatrix}.\]
We choose the basis $\{f_i^{(1)}\}$ and its the dual (up to constant) basis $\{f_i^{*(1)}\}$ with respect to the pairing (\ref{eq:pair}) as follows:
\[\arraycolsep=10pt\begin{array}{ll}
f_1^{(1)} = x_0 x_2 - x_1^2,  & f_1^{*(1)} = 6 \partial_0 \partial_2 - 2\partial_1^2, \\
f_2^{(1)} = x_1 x_3 - x_2^2,  & f_2^{*(1)} = 6\partial_1 \partial_3 - 2 \partial_2^2, \\
f_3^{(1)} = x_0 x_3 - x_1 x_2,  & f_3^{*(1)} = 9 \partial_0 \partial_3 - \partial_1\partial_2.
\end{array}\]
Next, it is easy to see that we can make the choice $f_1^{*(2)}= 3\partial_1 \partial_3 - \partial_2^2$. Now by a direct computation we obtain by Theorem \ref{thm:mult1} that (up to constant) $b_{2,1}(s)=(s+1)(s+5/6)$ and $b_{2,2}(s)=(s+1)(s+7/6)$.

When $k=3$, we have the same expansion for $f$ as with $k=1$, but with the roles of $\lambda$ and $\lambda^*\cdot \sigma$ interchanged. Namely, now $\lambda=(6,3)$ and $\lambda^*\cdot \sigma = (3,0)$. It is easy to see from (\ref{eq:binpol}) that the multiplicity of $\lambda=(6,3)$ in $\complex[V]$ is one, hence (\ref{eq:mult1}) holds. Again, by Theorem \ref{thm:mult1} we have a decomposition $b(s)=b_{3,1}(s)\cdot b_{3,2}(s)$, and it is immediate that $b_{3,2}(s)=s+1$, hence $b_{3,1}=(s+1)(s+5/6)(s+7/6)$.
\end{example}

\begin{example} $(\GL(6), \bigwedge^3 \complex^6)$

This example appears in \cite{skko} and is very similar to the one above, so we omit the details. There exists a semi-invariant $f$ of degree $4$. The $G$-decomposition of $\complex[V]$ is described in \cite[Section 6]{series}. Using this, it is easy to see that the multiplicity one property (\ref{eq:mult1}) holds for all cases $k=1,2,3$, just as in the above example. Hence one can apply Theorem \ref{thm:mult1} here as well and obtain decompositions of the $b$-function of $f$.
\end{example}

\subsection{The slice method}\label{subsec:method}

In general, the multiplicity one property (\ref{eq:mult1}) is not easy to check directly. Several criteria are given in \cite[Section 2]{sasu}, but these are not sufficient for our purposes. Indeed, the authors in \cite{sasu} bring attention to the problem of finding a more satisfactory criterion for the multiplicity one property to hold. Although difficult to answer in general, using slices as in Section \ref{subsec:slice} we derive an efficient criterion that is relatively easy to use. We call this process the slice method.

For the standard theory of reductive groups that we use, we refer the reader to \cite{borel}. Assume $G$ is a connected reductive group, $T$ a maximal torus of $G$ and $B$ a Borel subgroup and $B^-$ an opposite Borel subgroup so that $B\cap B^- = T$.  In this section, $V$ is a rational $G$-module with a $G$-decomposition $V=E\oplus F$ as in Example \ref{ex:main}. We have an algebra isomorphism $\complex[V]=\complex[E]\otimes \complex[F]$. As explained before, for $f\in \op{SI}(G,V)_\sigma$ with $\sigma$ multiplicity-free, we can write 
\begin{equation}\label{eq:decomp}
f(x,y) = \ds_{i=1}^p  f_i^{(1)}(x)f_i^{(2)}(y),
\end{equation}
for $x\in E, y\in F$, where $f_1^{(1)},\dots, f_p^{(1)}$ is a basis for a $G$-irreducible isotypic component $\complex[E]_\lambda$, and $f_1^{(2)},\dots,f_p^{(2)}$ is a $G$-dual basis for the irreducible $\complex[F]_{\sigma \cdot \lambda^*}$, for some dominant weight $\lambda$. 

We can assume WLOG that $f_1^{(1)},\dots, f_p^{(1)}$ is a $T$-weight basis of $\complex[E]_\lambda$ and $f_1^{(1)}$ is the highest weight vector, that is, $f_1^{(1)}$ is a $B$-semi-invariant of weight $\lambda$. Then  $f_1^{(2)},\dots,f_p^{(2)}$ is a $T$-weight basis of $\complex[F]_{\sigma \cdot \lambda^*}$, and $f_1^{(2)}$ is a lowest weight vector, that is, a $B^-$-semi-invariant of weight $\lambda^{-1} \cdot \sigma$. For simplicity, put $f^{(1)}:=f_1^{(1)}$ and $f^{(2)}:=f_1^{(2)}$.

 Let $f^*\in \complex[V^*]$ be the dual of $f$, which is a semi-invariant of multiplicity-free weight $\sigma^{-1}$.  We have an algebra isomorphism $\complex[V^*]=\complex[E^*]\otimes \complex[F^*]$, and we can write
\[f(x^*,y^*) = \ds_{i=1}^p  f_i^{*(1)}(x^*)f_i^{*(2)}(y^*),\]
for $x^*\in E^*,y^*\in F^*$. Here  $f_1^{*(1)},\dots, f_p^{*(1)}$ (resp. $f_1^{*(2)},\dots,f_p^{*(2)}$) is the dual basis of $f_1^{(1)},\dots, f_p^{(1)}$ (resp. $f_1^{(2)},\dots,f_p^{(2)}$) with respect to (\ref{eq:pair}). In particular, $f^{*(2)}:=f^{*(2)}_1$ is a highest weight vector, that is, a $B$-semi-invariant of weight $\lambda \cdot \sigma^{-1}$.

Since $f^{(1)}\in \complex[E]$ is a highest weight vector with dominant weight $\lambda$, the stabilizer of the line $\complex \cdot f^{(1)}$ is a parabolic subgroup $P$ of $G$. Moreover, since $f^{(2)}$ is a lowest weight vector of weight $\sigma\cdot\lambda^{-1}$, the opposite parabolic subgroup $P^-$ is the stabilizer of the line $\complex \cdot f^{(2)}$. We have $P\cap P^-=L$, where $L$ is the Levi subgroup of $P$, which is a connected reductive group.% and $f^{(2)}\in \op{SI}(L,F)$ is a semi-invariant of weight $(\lambda^{-1}\cdot \sigma)|_L$. 

%One can make the above considerations more explicit. Let $(X(T),\Phi,X(T)^\vee,\Phi^\vee)$ be the root datum of $G$ with pairing $\langle \,\, ,\, \rangle : X(T)\times X(T)^\vee \to \integ$. Let $\mathfrak{g}=\mathfrak{n}^-\oplus\mathfrak{t}\oplus\mathfrak{n}^+$ the root space decomposition for the Lie algebra of $G$, with $\mathfrak{n}^{\pm}=\displaystyle\bigoplus_{\alpha\in \Phi^{\pm}} \mathfrak{g_{\alpha}}$, where $\Phi^+$ (resp. $\Phi^-$) is the set of positive (resp. negative) roots, $\Phi=\Phi^- \cup \Phi^+$. Let $\Delta$ be a fixed set of simple roots $\alpha_i$. Put $I=\{\alpha_i \in \Delta | \langle \lambda, \alpha_i^\vee \rangle=0\}$, and let $\Phi_I$ be the root subsystem of $\Phi$ spanned by the simple roots in $I$. Denote $\mathfrak{n}^{\pm}_I=\displaystyle\bigoplus_{\alpha\in \Phi^{\pm}_I} \mathfrak{g}_{\alpha}$. Then $\mathfrak{p}=\mathfrak{n}_I^-\oplus \mathfrak{t} \oplus \mathfrak{n}^+$ is the parabolic subalgebra corresponding to $P$, and $\mathfrak{l}=\mathfrak{n}_I^-\oplus \mathfrak{t} \oplus \mathfrak{n}_I^+$ is the Levi subalgebra corresponding to $L$. %Putting $\mathfrak{u}^\pm=\displaystyle\bigoplus_{\alpha\in \Phi^\pm\backslash \Phi^\pm_I} \mathfrak{g}_{\alpha}$ and $U^\pm$ the corresponding unipotent subgroups, we have the usual Levi decomposition $P= L \ltimes U$.

We assume that we have an element $v\in E$ such that $f^{(1)}(v)=1$, and $f^{(1)}_i(v)=0$, for $i\neq1$. Additionally, we assume that $v$ has a dense $P$-orbit in $E$ (for example, when the action of $G$ on $E$ is multiplicity-free, i.e. $E$ has a dense $B$-orbit -- see \cite{howumed}). With notation from Section \ref{subsec:slice} (choosing $H=P$), we have a decomposition $V=\mathfrak{p}v \oplus F$, and we consider the (slice) representation $(L,F)$ at $v$. Putting $x=v$ in (\ref{eq:decomp}) we get that $f_v = f^{(2)}$ is an $L$-semi-invariant on $F$ of weight $(\lambda^{-1} \cdot \sigma)|_{L}$ (restriction to $L$). If $L_v$ denotes the stabilizer of $v$, then we will see that $f_v$ is an $L_v$-semi-invariant of weight $\sigma|_{L_v}$.

\begin{theorem}\label{thm:maintheor}
Let $v\in E$ as above, assume that the weight $\sigma|_{L_v}$ in $\complex[F]$ is multiplicity-free. Then the multiplicity one property (\ref{eq:mult1}) holds in $\complex[V]$, and we have a decomposition $b_f(s) = b_1(s)\cdot b_2(s)$ as in Theorem \ref{thm:mult1} with $b_2(s)=b_{f_v}(s)=b_{f,v}(s)$.
\end{theorem}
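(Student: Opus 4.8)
The plan is to deduce the multiplicity one property (\ref{eq:mult1}) from the multiplicity-freeness of $\sigma|_{L_v}$ in $\complex[F]$, and then read off the decomposition of $b_f$ from Theorem \ref{thm:mult1} together with Lemma \ref{lem:basic}. The key observation is that the weight appearing in (\ref{eq:mult1}) is $\lambda\cdot\sigma^{d-k-1}$, and since $f^{d-k-1}$ already accounts for the factor $\sigma^{d-k-1}$ while $f^{(1)}$ accounts for $\lambda$, the product $f^{(1)}\cdot f^{d-k-1}$ is a nonzero vector of the correct weight; so what must be shown is that the $G$-isotypic component of $\complex[V]$ of highest weight $\lambda\cdot\sigma^{d-k-1}$ is one-dimensional, i.e. equals $\complex\cdot f^{(1)}f^{d-k-1}$.

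First I would set up the dictionary between $G$-representations in $\complex[V]$ and $L$-, resp.\ $L_v$-, representations in $\complex[F]$. Using the parabolic $P$ with Levi $L$, the highest-weight vector $f^{(1)}\in\complex[E]_\lambda$, and the fact that $v$ has a dense $P$-orbit in $E$, I would argue (as in the slice setup of Section \ref{subsec:slice}, with $H=P$) that the restriction map $g\mapsto g_v$ gives an injection $\op{SI}(P,V)_{\tau}\hookrightarrow \op{SI}(L_v,F)_{\tau|_{L_v}}$ analogous to $\phi_v^\sigma$ in Lemma \ref{lem:basic}, and that a $G$-highest-weight vector of weight $\lambda\cdot\sigma^{m}$ in $\complex[V]$ restricts (after dividing by $f^{(1)}$, which is a unit near $v$ on $E$) to an $L_v$-semi-invariant of weight $\sigma^{m}|_{L_v}$ in $\complex[F]$ — here using $\mu^*(f)=\sigma^{-1}\otimes f_v$ from (\ref{eq:basic}) and $f_v=f^{(2)}$, $f^{(1)}(v)=1$, $f^{(1)}_i(v)=0$ for $i\neq 1$ to control how the expansion (\ref{eq:decomp}) behaves upon specializing $x=v$. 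Injectivity of this restriction then forces the multiplicity of the $G$-module of highest weight $\lambda\cdot\sigma^{d-k-1}$ in $\complex[V]$ to be bounded by the multiplicity of $\sigma^{d-k-1}|_{L_v}=(\sigma|_{L_v})^{d-k-1}$ in $\complex[F]$, which is $1$ by the multiplicity-free hypothesis on $\sigma|_{L_v}$. Since $f^{(1)}f^{d-k-1}$ realizes this weight, the multiplicity is exactly $1$, giving (\ref{eq:mult1}).

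With (\ref{eq:mult1}) in hand, Theorem \ref{thm:mult1} applies directly and yields $b_f(s)=b_1(s)\cdot b_2(s)$ with $b_2(s)$ characterized by part (2) of that theorem, namely $f^{*(2)}_i(\partial x)\cdot f^{s+1}=b_2(s)\,f^{(1)}_i f^s$. To identify $b_2$, I would invoke Remark \ref{rem:loc}: since $f^{(1)}(v)=1\neq 0$, specializing the relation (2) at $x=v$ (more precisely at $v+w$, $w\in F$, using $\mu^*(f)=\sigma^{-1}\otimes f_v$ to turn the operator identity on $V$ into one on the slice $F$) shows that $b_2(s)$ is exactly the operator identity defining $b_{f_v}(s)$; hence $b_{f,v}(s)\mid b_2(s)$ and $b_{f_v}(s)=b_2(s)$ up to checking the operator produced is the optimal one, which follows because $f^{*(2)}$ is the $G$-dual (hence essentially the dual semi-invariant on the slice) of $f^{(2)}=f_v$. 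Finally $b_{f_v}(s)=b_{f,v}(s)$ is Lemma \ref{lem:basic}, and since $f_v=f^{(2)}$ is homogeneous we also recover $b_2=b_{f,v}$, completing the chain $b_2(s)=b_{f_v}(s)=b_{f,v}(s)$.

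The main obstacle I anticipate is the bookkeeping in the first step: carefully verifying that a $G$-highest-weight vector of weight $\lambda\cdot\sigma^{m}$ in $\complex[V]$, when restricted along $\mu$ and divided by the appropriate power of $f^{(1)}$, lands in $\op{SI}(L_v,F)$ with the claimed weight $\sigma^m|_{L_v}$ — and that this restriction is injective on that isotypic component. This requires knowing that $f^{(1)}$ is (up to the dense $P$-orbit) the unique $P$-semi-invariant of its weight on $E$, so that division by $f^{(1)}$ is unambiguous, and that the only $G$-covariants surviving the specialization $x=v$ are the ones "supported" on $f^{(1)}$; the density of the $P$-orbit of $v$ in $E$ is precisely what makes this work, as it forces $\complex[E]^{[P,P]}$ to be a polynomial ring in which evaluation at $v$ is injective on each weight space. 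Everything else is a routine application of the already-established Theorem \ref{thm:mult1}, Lemma \ref{lem:basic}, and Remark \ref{rem:loc}.
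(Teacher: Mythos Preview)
Your overall strategy coincides with the paper's: use the slice map $\phi_v$ (with $H=P$) from Lemma~\ref{lem:basic} to bound the multiplicity of $\lambda\cdot\sigma^k$ in $\complex[V]$ by the multiplicity of the restricted weight in $\complex[F]$, apply Theorem~\ref{thm:mult1}, then specialize equation~(2) at $x=v$ to identify $b_2(s)$. The differences are in the bookkeeping, and here your execution is more complicated than necessary and leaves a small gap.

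The detour through ``dividing by $f^{(1)}$'' is unneeded and is precisely the obstacle you flag at the end. The paper avoids it with a one-line observation: since $L_v$ fixes $v$ and $f^{(1)}(v)=1$, for $l\in L_v$ one has $\lambda(l)=\lambda(l)f^{(1)}(v)=(l\cdot f^{(1)})(v)=f^{(1)}(l^{-1}v)=f^{(1)}(v)=1$, hence $\lambda|_{L_v}=1$. Consequently any $P$-semi-invariant of weight $\lambda\cdot\sigma^k$ restricts under $\phi_v$ directly to an $L_v$-semi-invariant of weight $(\lambda\cdot\sigma^k)|_{L_v}=\sigma^k|_{L_v}$, with no division required; injectivity of $\phi_v^{\lambda\cdot\sigma^k}$ then gives (\ref{eq:mult1}) exactly as you intend.

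There is also a point you gloss over in identifying $b_2(s)=b_{f_v}(s)$. Specializing equation~(2) at $x=v$ yields $f^{*(2)}(\partial y)\cdot f_v^{s+1}=b_2(s)f_v^s$, but to conclude this \emph{equals} $b_{f_v}(s)$ via Theorem~\ref{thm:bfun} one needs that $f_v$ has multiplicity-free weight for the \emph{reductive} group $L$ (not merely $L_v$) and that $f^{*(2)}$ is, up to scalar, the dual semi-invariant $f_v^*$. The paper handles this: from $\lambda|_{L_v}=1$ and the hypothesis on $\sigma|_{L_v}$ one gets that $(\lambda^{-1}\cdot\sigma)|_L$ is multiplicity-free for $f_v$; and since $f^{*(2)}$ is a highest weight vector of weight $\lambda\cdot\sigma^{-1}$, it is an $L$-semi-invariant of weight $(\lambda\cdot\sigma^{-1})|_L$, hence the dual of $f_v$ up to constant. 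Your parenthetical ``hence essentially the dual semi-invariant on the slice'' is correct in spirit but requires this argument.
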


\begin{proof}
First, we show that $\lambda | _{L_v} = 1$. The polynomial $f^{(1)}\in\complex[E]$ is an $L_v$-semi-invariant of weight $\lambda | _{L_v}$. In particular, we have $ (l \cdot f^{(1)})(v) = \lambda(l) f^{(1)}(v) = \lambda(l)$, for any $l \in L_v$. On the other hand, we have $(l \cdot f^{(1)})(v)=f^{(1)}(l^{-1} v) = f^{(1)}(v) =1$, hence  $\lambda | _{L_v} = 1$. This implies that $f_v$ is an $L_v$-semi-invariant of weight $\sigma|_{L_v}$.

Now we show that property (\ref{eq:mult1}) holds. In fact, we prove that the multiplicity in $\complex[V]$ of the irreducible corresponding to $\lambda \cdot \sigma^k$ is one, for any $k\in\nat$. As noted in Remark \ref{rem:works}, the considerations in Section \ref{subsec:slice} work for the slice representation $(L_v,F)$ with the group $L_v$ (although this group is defined in a different way than the one defined in that section). Using that $\lambda | _{L_v} = 1$, the map (\ref{eq:slice}) in this case is
\[\phi_v^{\lambda\cdot\sigma^k} : \op{SI}(P,V)_{\lambda\cdot\sigma^k} \to \op{SI}(L_v,F)_{\sigma^k |_{L_v}}.\]
Since the weight of $\sigma|_{L_v}$ is multiplicity-free,  the space $\op{SI}(L_v,F)_{\sigma^k |_{L_v}} = \complex \cdot f_v^k$ is one-dimensional. By Lemma \ref{lem:basic}, $\phi_v^{\lambda\cdot\sigma^k}$ is injective, hence $\op{SI}(P,V)_{\lambda\cdot\sigma^k}=\complex \cdot (f^{(1)}f^k)$. This implies that the multiplicity one property (\ref{eq:mult1}) holds. By Theorem \ref{thm:mult1} (2)  we have an equation
\begin{equation}\label{eq:eval}
f^{*(2)}(\partial y) \cdot f^{s+1}(x,y) = b_2(s) f^{(1)}(x) f^s(x,y).
\end{equation}
Since $\sigma|_{L_v}$ is a multiplicity-free weight and $\lambda | _{L_v} = 1$, the $L$-semi-invariant $f_v$ has multiplicity-free weight $(\lambda^{-1} \cdot \sigma)|_{L}$ for the reductive group $L$ (in particular, $f$ is homogeneous). Recall that $f^{*(2)}$ has highest weight $\lambda\cdot \sigma^{-1}$, hence it is an $L$-semi-invariant of weight $(\lambda\cdot \sigma^{-1})|_L$. This shows that (up to constant)  $f^{*(2)}$ is the dual $L$-semi-invariant of $f_v$ on $F$, that is $f_v^* = f^{*(2)}$. Now specializing at $x=v$ in the equation (\ref{eq:eval}) we obtain
\[f^*_v (\partial y) \cdot f_v^{s+1} (y) = b_2(s) f^s_v(y).\]
By Theorem \ref{thm:bfun} this equation gives precisely the $b$-function of $f_v$, so $b_2(s)=b_{f_v}(s)$. To see that equation (\ref{eq:eval}) gives indeed the local $b$-function of $f$ at $v$, we use Lemma \ref{lem:basic} again and obtain $b_{f_v}(s)=b_{f,v}(s)$.
\end{proof}

In the case of multiplicity-free tuples as in Definition \ref{def:tuple}, we can say more about the first factor $b_1(s)$ in Theorem \ref{thm:maintheor}:

\begin{lemma}\label{lem:firstfac}
Assume additionally, that $\ul{f}=(f_1^{(1)},\dots, f_p^{(1)})$ from (\ref{eq:decomp}) is a multiplicity-free tuple. Then $b_1(s)=P_{\ul{f}}(s)$ , with $P_{\ul{f}}(s)$ as in Proposition \ref{prop:pf}. In particular, the Bernstein-Sato polynomial $b_{\ul{f}}(s)$ divides $b_1(s)$.
\end{lemma}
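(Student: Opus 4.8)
The plan is to identify the differential operator defining $b_1(s)$ in Theorem \ref{thm:maintheor}(1) with the $G$-invariant operator $D_{\ul f}$ from Proposition \ref{prop:pf}, and then to read off the eigenvalue. Recall from the construction preceding Theorem \ref{thm:mult1} that $b_1(s)$ is defined by
\[
\left[\ds_{i=1}^p f_i^{*(1)}(\partial x) f_i^{(1)}(x)\right]\cdot f^s = b_1(s)\, f^s,
\]
where $f_1^{(1)},\dots,f_p^{(1)}$ is a basis of the $G$-irreducible $\complex[E]_\lambda$ and $f_1^{*(1)},\dots,f_p^{*(1)}$ the $G$-dual basis with respect to the pairing (\ref{eq:pair}). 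Under the hypothesis that $\ul f=(f_1^{(1)},\dots,f_p^{(1)})$ is a multiplicity-free tuple (in particular, WLOG a basis of $M_1$), this operator is exactly $D_{\ul f}=\ds_{i=1}^p f_i^{*(1)}(\partial x) f_i^{(1)}(x)$. So the task reduces to showing the scalar $b_1(s)$ by which $D_{\ul f}$ acts on $f^s$ coincides with the polynomial $P_{\ul f}(s)$ by which $D_{\ul f}$ acts on $\ul f^{\ul s}$, after the substitution $s=s_1+\dots+s_r$.

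First I would observe that $D_{\ul f}$ is a $G$-invariant differential operator, so it acts as a scalar on each isotypic component of $\complex[V]$ (Schur's lemma). Now $f = f^{(1)}\cdot f^{(2)}$ lies in the product $M_\lambda\otimes M_{\sigma\cdot\lambda^*}$, and more to the point $f$ itself, as a highest-weight-times-lowest-weight combination, generates under $G$ an irreducible submodule; the relevant fact from the multiplicity one property is that $f^t$ for each $t$ generates an irreducible, and $M_\lambda\cdot f^{d-k-1}$ realizes the irreducible of highest weight $\lambda\cdot\sigma^{d-k-1}$ with multiplicity one. Since $D_{\ul f}$ is $G$-invariant it acts by a scalar on the span of $\{f_i^{(1)} f^t\}_i$ for each $t\in\nat$; evaluating on $f^{(1)} f^t = f_1^{(1)}\cdot\ul f^{\ul t'}$-type monomials, and using that $P_{\ul f}$ was defined via the action of the \emph{same} operator $D_{\ul f}$ on monomials $\ul f^{\ul s}$ (with $s=\sum s_i$), the two eigenvalue polynomials must agree as polynomials in $s$. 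Concretely: take the component of $\complex[V]$ spanned by $\{f_i^{(1)}\ul f^{\ul k}\}$ with $|\ul k|=t$ — this is the same isotypic piece whether we think of it as coming from the expansion of $f^{t}$ or from powers of the tuple, hence $b_1(t) = P_{\ul f}(t)$ for all $t\in\nat$, forcing $b_1(s)=P_{\ul f}(s)$.

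The last sentence of the lemma is then immediate: by Proposition \ref{prop:pf}, $b_{\ul f}(s)$ divides $P_{\ul f}(s)=b_1(s)$.

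The main obstacle I anticipate is bookkeeping the identification of operators and modules cleanly — specifically, matching the basis $\{f_i^{(1)}\}$ of $M_\lambda=\complex[E]_\lambda$ with a basis of $M_1$ in Definition \ref{def:tuple} (they must be the same tuple up to the WLOG normalization that $\ul f$ is a basis of $M_1$), and verifying that the $G$-dual basis $\{f_i^{*(1)}\}$ used to build $b_1(s)$ is the \emph{same} dual basis used in $D_{\ul f}$. Once the operator is literally seen to be $D_{\ul f}$, the Schur's-lemma eigenvalue argument is routine; the subtlety is only in confirming that the irreducible constituent of $\complex[V]$ on which we evaluate (the one containing $f^{(1)}\cdot f^t$) is genuinely the isotypic component $M_{|\ul k|+1}$ or the appropriate twist, so that the eigenvalue of $D_{\ul f}$ there is governed by $P_{\ul f}$ rather than some unrelated polynomial. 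This is exactly where the multiplicity-free tuple hypothesis and the multiplicity one property (\ref{eq:mult1}) from Theorem \ref{thm:maintheor} are used, and I would spell that matching out carefully rather than gloss it — referring, as the paper suggests, to the Fourier-transform computation in the proof of \cite[Theorem 3.5]{bmax} or the Schur's-lemma argument around \cite[Theorem 3.3]{sasu} to pin down the explicit form of $P_{\ul f}(s)$ when needed.
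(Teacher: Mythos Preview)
Your identification of the operator in Theorem~\ref{thm:mult1}(1) with $D_{\ul f}$ is correct, and the strategy of comparing eigenvalues via Schur's lemma is the right one. However, the execution has a gap at the point where you try to match the eigenvalue of $D_{\ul f}$ on $f^s\in\complex[V]$ with its eigenvalue $P_{\ul f}(s)$ on $\ul f^{\ul s}\in\complex[E]$. You write that ``the component of $\complex[V]$ spanned by $\{f_i^{(1)}\ul f^{\ul k}\}$ with $|\ul k|=t$'' is ``the same isotypic piece whether we think of it as coming from the expansion of $f^t$ or from powers of the tuple'' --- but this is not right: the elements $f_i^{(1)}\ul f^{\ul k}$ lie in $\complex[E]$, while $f^t$ lives in $\complex[V]=\complex[E]\otimes\complex[F]$ and involves the $y$-variables. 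The one-dimensional $G$-module $\complex\cdot f^t$ (weight $\sigma^t$) and the irreducible $M_t\subset\complex[E]$ are genuinely different representations, so you cannot conclude the eigenvalues agree just by invoking isotypic components.

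The paper closes this gap with a simple trick that you should adopt: since $D_{\ul f}$ involves only the $E$-variables, one may \emph{specialize} the equation $D_{\ul f}\cdot f^s(x,y)=b_1(s)f^s(x,y)$ at any point $y=w\in F$ with $f(\cdot,w)\not\equiv 0$. By the expansion~(\ref{eq:decomp}), $f(x,w)=\sum_i f_i^{(2)}(w)\,f_i^{(1)}(x)$ is a nonzero element of $M_1\subset\complex[E]$, hence $f^s(x,w)\in M_s$. Now Schur's lemma applies directly in $\complex[E]$: $D_{\ul f}$ acts on $M_s$ by the scalar $P_{\ul f}(s)$ from Proposition~\ref{prop:pf}, so $b_1(s)=P_{\ul f}(s)$ for every positive integer $s$, hence as polynomials. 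This replaces your vague ``isotypic piece'' matching with a one-line reduction, and in particular renders the multiplicity one property~(\ref{eq:mult1}) unnecessary for this lemma --- only the multiplicity-free tuple hypothesis is used.
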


\begin{proof}
It is enought to show $b_1(s)=P_{\ul{f}}(s)$ for an arbitrary positive integer $s$. Denote by $M_s$ the irreducible $G$-module as in Definition \ref{def:tuple}. By Theorem \ref{thm:mult1}, $b_1(s)$ is given by the equation
\[\left[\ds_{i=1}^p f^{*(1)}_i(\partial x) f^{(1)}_i (x)\right] \cdot f^{s}(x,y) = b_1(s) f^s(x,y).\]
We can evaluate the equation at any point $y=w\in F$. Choose $w\in F$ such that the polynomial $f(x,w)\in \complex[E]$ is not zero. By the expansion (\ref{eq:decomp}), we see that $f^s(x,w) \in M_s$, for any $w\in F$. By Schur's Lemma and Proposition \ref{prop:pf}, $D_{\ul{f}}$ acts on $M_s$ by the scalar $P_{\ul{f}}(s)$, which then coincides with $b_1(s)$ by the equation above.
\end{proof}

Now we formulate a result for the important case when the representation $(G,V)$ is of the form 
\begin{equation}\label{eq:matrix}
(\GL(m)\times \GL(n) \times G'\,,\, \Lambda_1^{(*)} \otimes \Lambda_1^{(*)} \otimes 1 \oplus 1 \otimes \rho\,,\, M_{m,n}\oplus F)
\end{equation}
with $m\leq n$. This is the main case considered also in \cite{sasu} and \cite{sugi}, and we use the notation as in \cite[Section 2.1]{sasu}. Namely, here $G'$ is an arbitary connected reductive group, $ \rho$ is an arbitary rational representation of $\GL(n) \times G'$, and  $\Lambda_1^{(*)}$ is either the standard representation of $\GL$ or its dual (for simplicity, we take WLOG the duals $\Lambda_1^{*}$).  Many prehomogeneous vector spaces are of this form -- see Sections \ref{sec:quiv} and the classification in \cite{saki}.

We define
\[H=\GL(m) \times \GL(n-m) \times G'\]
to be the the reductive subgroup of $\GL(n) \times G' \subset  G$ , with the factor $\GL(m) \times \GL(n-m)$ of $H$ embeds into $\GL(n)$ as

\[\GL(m) \times \GL(n-m) = \left\{\begin{bmatrix} A & 0 \\ 0 & B \end{bmatrix} : \mbox{ where } A\in \GL(m), B\in \GL(n-m) \right\}\subset \GL(n).\]

 Let $I \subset \complex[M_{m,n}]$ denote the ideal generated by the maximal minors as introduced in Section \ref{subsec:bideal}. Choose $v=\left(\begin{bmatrix}I_m & 0_{n-m} \end{bmatrix},0\right) \in  M_{m,n}\oplus F$.

\begin{theorem} \label{thm:mainapp}
Consider the space $V=M_{m,n}\oplus F$ as in (\ref{eq:matrix}) and let $f\in \complex[V]$ be a $G$-semi-invariant of weight $\sigma=\det^d \otimes \det^e \otimes \sigma'$, where $d,e\in \nat$ and $\sigma'$ is a character of $G'$. Assume that $\det^{e-d} \otimes \det^e \otimes \sigma'$ is a multiplicity-free character of $H$ in $\complex[F]$. Then $\sigma$ is a multiplicity-free character of $G$ in $\complex[V]$ and the multiplicity one property (\ref{eq:mult1}) holds. Moreover, the Bernstein-Sato polynomial of $f$ decomposes as $b_f(s)=b_1(s) \cdot b_2(s)$ where:
\begin{itemize}
\item[(1)] $b_1(s) =b_{I^{d}}(s) = [s]_{n-m,n}^{d}$ is the Bernstein-Sato polynomial of the ideal $I^{d}$;
\item[(2)] $b_2(s) = b_{f,v}(s) = b_{f_v}(s)$ is the Bernstein-Sato polynomial of the induced semi-invariant $f_v$ on the slice $(H,F)$, which is also equal to the local Bernstein-Sato polynomial of $f$ at $v$.
\end{itemize}
\end{theorem}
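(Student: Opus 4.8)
The plan is to recognize Theorem \ref{thm:mainapp} as an instance of Theorem \ref{thm:maintheor} together with Lemma \ref{lem:firstfac}, after the bookkeeping needed to fit the setup \eqref{eq:matrix} into the general slice framework of Section \ref{subsec:method}. First I would take $E=M_{m,n}$ and the $G$-decomposition $V=E\oplus F$, with $G'=\GL(m)\times\GL(n)\times G'$ acting as in \eqref{eq:matrix}. The semi-invariant $f$ has multiplicity-free weight $\sigma=\det^d\otimes\det^e\otimes\sigma'$ provided we check the hypotheses, so I would first produce the expansion \eqref{eq:decomp}: since $f$ restricted to the $E$-variables must transform by $\det^d$ under the first $\GL(m)$-factor, the isotypic component $\complex[E]_\lambda$ that appears is forced by the first fundamental theorem for $\GL$ to be the span of the $d$-th power of the tuple of maximal $m\times m$ minors of the generic matrix $X\in M_{m,n}$ (cf. the proof of Theorem \ref{thm:powermax}); the dominant weight is $\lambda = \det^d\otimes(\text{partition }(d^m)\text{ on }\GL(n))$. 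In particular $\ul{f}=(f_1^{(1)},\dots,f_p^{(1)})$ is the $d$-th power of the multiplicity-free tuple of maximal minors, hence itself multiplicity-free, so Lemma \ref{lem:firstfac} will apply and give $b_1(s)=P_{\ul f}(s)=b_{I^d}(s)=[s]^d_{n-m,n}$ by Theorem \ref{thm:powermax}. This settles part (1) once the slice machinery is in place.

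Next I would set up the parabolic $P$: since $f^{(1)}$ is (up to constant) the $d$-th power of the leading maximal minor — the one on the first $m$ columns — its stabilizing line has stabilizer the parabolic $P$ of $G$ whose Levi part is exactly $H=\GL(m)\times\GL(n-m)\times G'$ as defined before the theorem. The chosen point $v=\left(\begin{bmatrix}I_m & 0_{n-m}\end{bmatrix},0\right)$ satisfies $f^{(1)}(v)=1$ and $f_i^{(1)}(v)=0$ for $i\neq 1$ (every other maximal minor involves a later column and so vanishes at $v$), and the $P$-orbit of $I_m$-block in $M_{m,n}$ is dense in $E$ (the maximal-minor action of $\GL(m)\times\GL(n)$ on $M_{m,n}$ already has a dense orbit, and $P$ surjects onto enough of it). Thus the standing hypotheses of Theorem \ref{thm:maintheor} are met with $L=H$ as the Levi, and I must identify $L_v$ and the weight $\sigma|_{L_v}$. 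Computing the stabilizer of $v=\begin{bmatrix}I_m & 0\end{bmatrix}$ inside $H$: an element $(A,B,g')\in\GL(m)\times\GL(n-m)\times G'$ (with $g'$ acting via $\rho$ only on $F$, trivially on $M_{m,n}$, since the $E$-summand is $1\otimes$ nothing from $G'$) fixes $v$ iff the induced action on $M_{m,n}$ does; tracing through the $\Lambda_1^*\otimes\Lambda_1^*$ action one finds the stabilizer condition ties the $\GL(m)$ factor from the row-side to the $\GL(m)\subset\GL(n)$ block, so $L_v$ is (isomorphic to) $\GL(m)\times\GL(n-m)\times G'$ embedded diagonally in the appropriate way, and the restriction of $\sigma=\det^d\otimes\det^e\otimes\sigma'$ becomes $\det^{e-d}\otimes\det^e\otimes\sigma'$ on this $H$. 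This is precisely the character assumed multiplicity-free in $\complex[F]$ in the hypothesis, so Theorem \ref{thm:maintheor} applies verbatim and yields the decomposition $b_f(s)=b_1(s)b_2(s)$ with $b_2(s)=b_{f_v}(s)=b_{f,v}(s)$, giving part (2), while the multiplicity one property \eqref{eq:mult1} and the multiplicity-freeness of $\sigma$ in $\complex[V]$ come for free from that theorem.

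The main obstacle I anticipate is the explicit identification of $L_v$ and the correct computation of $\sigma|_{L_v}=\det^{e-d}\otimes\det^e\otimes\sigma'$ — that is, tracking how the character $\sigma$, which is defined on the ambient $\GL(m)\times\GL(n)\times G'$, restricts once one passes to the stabilizer of $v$ inside $H$. The shift from $\det^d$ to $\det^{e-d}$ is the subtle point: it arises because the $\GL(m)$ acting on the rows of $M_{m,n}$ gets "absorbed" into the $\GL(m)$-block of $\GL(n)$ when we stabilize $v=\begin{bmatrix}I_m&0\end{bmatrix}$, so the two $\det^d$ and $\det^e$ contributions on overlapping indices combine as $\det^{e}$ on the $\GL(m)$-block after the $\det^{d}$ from the row-side is moved over, leaving a residual $\det^{-d}$; hence $\det^{e-d}$ on the first $m$ coordinates. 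Once this linear-algebra-of-characters computation is done carefully — essentially the content of the displayed computation $\lambda|_{L_v}=1$ in the proof of Theorem \ref{thm:maintheor}, specialized to this matrix situation — everything else is an application of results already established. I would also remark that $f_v=f^{(2)}$ by putting $x=v$ in \eqref{eq:decomp}, so $f_v$ is literally the induced semi-invariant on the slice $(H,F)$ as claimed, and that $f_v^*=f^{*(2)}$ exactly as in the proof of Theorem \ref{thm:maintheor}, which is what makes the equation for $b_2$ a genuine Bernstein-Sato equation via Theorem \ref{thm:bfun}.
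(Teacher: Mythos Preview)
Your approach is essentially the paper's own: verify the hypotheses of Theorem~\ref{thm:maintheor} in the concrete situation \eqref{eq:matrix}, then invoke Lemma~\ref{lem:firstfac} and Theorem~\ref{thm:powermax} for the identification of $b_1(s)$. Two inaccuracies should be cleaned up, however.

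First, the Levi $L$ of the parabolic $P\subset G=\GL(m)\times\GL(n)\times G'$ associated to $\lambda=\det^d\otimes(d^m,0^{n-m})\otimes 1$ is \emph{not} $H$: it is $\GL(m)\times\bigl(\GL(m)\times\GL(n-m)\bigr)\times G'$. What is true (and what the paper computes) is that the stabilizer $L_v$ of $v$ inside $L$ consists of elements $\left(A^{-1},\begin{bmatrix}A&0\\0&B\end{bmatrix},g\right)$ and is therefore \emph{isomorphic} to $H$ via the forgetful map dropping the first factor. Your diagonal embedding description is the right idea, but you should not write ``$L=H$ as the Levi''.

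Second, there is a mild circularity in your order of operations. The expansion \eqref{eq:decomp} and the whole setup of Theorem~\ref{thm:maintheor} already presuppose that $\sigma$ is multiplicity-free in $\complex[V]$; this is a standing assumption in Section~\ref{subsec:method}, not a conclusion of Theorem~\ref{thm:maintheor}. The paper's proof therefore establishes multiplicity-freeness of $\sigma$ \emph{first}, directly, by the slice-map argument: since $\GL(m)$ acts trivially on $F$, any $G$-semi-invariant $h$ of weight $\sigma^k$ restricts to an $H$-semi-invariant $h_v\in\complex[F]$ of weight $(\det^{e-d}\otimes\det^e\otimes\sigma')^k$, and injectivity of $\phi_v^{\sigma^k}$ (Lemma~\ref{lem:basic}) together with the assumed multiplicity-freeness on $\complex[F]$ forces $\op{SI}(G,V)_{\sigma^k}=\complex\cdot f^k$. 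Only after this does one write down the expansion and feed everything into Theorem~\ref{thm:maintheor}. Once you reorder these two steps, your argument matches the paper's.
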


\begin{proof}
The stabilizer $G_v$ of $v$ is formed by all elements of the form
\[\left(A^{-1}, \begin{bmatrix} A  & 0 \\ C & B \end{bmatrix}, g\right) \subset \GL(n) \times \GL(m)\times G'.\]
Let $L_v$ be the reductive subgroup of $G_v$ formed by the elements as above with $C=0$. Clearly, $L_v$ is isomorphic to $H$ (by forgetting the first factor).

We have $\mathfrak{g}v=M_{m,n}$. As in Section \ref{subsec:slice}, we consider the map from $\complex[V]$ to $\complex[F]$ given by $h \mapsto h_v$, where $h\in \complex[V]$, and $h_v\in\complex[F]$ is defined by $h_v(y)=h(v+y)$, for $y\in F$. Fix $k\in\nat$ and assume $h\in \complex[V]$ is a $G$-semi-invariant of weight $\sigma^k$. As seen in Section \ref{subsec:slice}, $h_v\in\complex[F]$ is then an $L_v$-semi-invariant of weight $\sigma^k|_{L_v}=(\det^{e-d} \otimes \det^e \otimes \sigma')^k$. Since the first factor $\GL(n)$ of $G$ acts on $F$ trivially, in fact $h_v$ is also an $H$-semi-invariant of weight $(\det^{e-d} \otimes \det^e \otimes \sigma')^k$. This shows that we have a map as (\ref{eq:slice}):
\[\phi_v^{\sigma^k} : \op{SI}(G,V)_{\sigma^k} \to \op{SI}(H,F)_{(\det^{e-d} \otimes \det^e \otimes \sigma')^k}.\]
Since $\det^{e-d} \otimes \det^e \otimes \sigma'$ is multiplicity-free, $ \op{SI}(H,F)_{(\det^{e-d} \otimes \det^e \otimes \sigma')^k}=\complex \cdot f_v^k$ is one-dimensional. By Lemma \ref{lem:basic} (taking into account Remark \ref{rem:works}) the map $\phi_v^{\sigma^k}$ is injective, hence $\op{SI}(G,V)_{\sigma^k}=\complex \cdot f^k$ and $\sigma$ is multiplicity-free. In particular, we have an expansion of the form (\ref{eq:decomp}). By FFT  (see \cite[XI. Section 1.2]{proc}), the elements $f^{(1)}_1,\dots, f^{(1)}_p$ are a basis of the irreducible $\GL(m)\times\GL(n)$-module $M_\lambda$ of $\complex[M_{m,n}]$, where the dominant weight is $\lambda = \det^d \otimes (d^m, 0^{n-m}) \otimes 1$ (see also \cite[Section 2.1]{sasu}). We choose $f^{(1)}_1,\dots, f^{(1)}_p$ to be elements that are products of $d$ maximal minors. We can take $f_1^{(1)}$ to be the $d$th power of the maximal minor corresponding to the first $m$ columns, which, by a standard choice of a Borel subgroup $B$ of $\GL(m)\times\GL(n)$, is highest weight vector. Note that under this choice the $B$-orbit of $v$ is dense in $M_{m,n}$. Also, $f_1^{(1)}(v)=1$,  while $f_i^{(1)}(v) = 0$, for $i\neq 1$, so $f^{(2)}_1 = f_v$ (see the considerations before Theorem \ref{thm:maintheor}). Let $P$ be the parabolic subgroup of $G$ corresponding to $\lambda$, i.e. the stabilizer of the line $\complex \cdot f_1^{(1)}$, and let $L$ be the corresponding Levi subgroup. Then it is easy to see that the stabilizer of $v$ in $L$ is the same as the group $L_v$ constructed above. Since  $\sigma'|_H$ is multiplicity-free, $\sigma|_{L_v}$ is multiplicity-free on $F$ as well. 

We showed that all the assumptions in Theorem \ref{thm:maintheor} are satisfied. This, together with Lemma \ref{lem:firstfac} and Theorem \ref{thm:powermax}, yields the conclusion.
\end{proof}

The technique can be used to determine an explicit representative for the locally semi-simple point of $f$ (see Section \ref{subsec:bfun}).

\begin{proposition}\label{prop:locsemi}
Consider a semi-invariant $f\in \complex[V]$ as in Theorem \ref{thm:mainapp} with $d\neq 0$. Let $w\in F$ be the locally semi-simple point of $f_v \in \op{SI}(H,F)$. Then $v+w$ is the locally semi-simple point of $f\in \op{SI}(G,V)$.
\end{proposition}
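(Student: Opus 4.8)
The plan is to identify the locally semi-simple point of $f$ via the geometric characterization from Section \ref{subsec:bfun}: since $\sigma$ is multiplicity-free (which we know from Theorem \ref{thm:mainapp}), the locally semi-simple point is the unique element lying on the closed $G$-orbit in the open set $\{f\neq 0\}$. So I must show that $G\cdot(v+w)$ is closed and that it is the unique closed orbit meeting $\{f\neq 0\}$; uniqueness is automatic once closedness is established, so the real content is closedness of $G\cdot(v+w)$.

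The key tool is the slice structure already set up in the proof of Theorem \ref{thm:mainapp}. First I would recall that $V=\mathfrak{g}v\oplus F$ with $(L_v,F)$ the slice representation at $v$, where $L_v\cong H$, and that $f_v\in\op{SI}(H,F)$ is the induced semi-invariant with $f(v+y)=f_v(y)$. By Luna's slice theorem (or directly, since the orbit map is here well-behaved — $v$ has dense $P$-orbit in $E=M_{m,n}$ and $f^{(1)}(v)\neq 0$ guarantees $v$ is a $G$-semistable point with $G\cdot v$ of minimal dimension in its fiber), the $G$-saturation of a small slice neighborhood of $v$ is $G$-equivariantly modeled on $G\times_{L_v}F$, and closed $G$-orbits near $G\cdot v$ correspond to closed $L_v$-orbits in $F$. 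Since $w$ is the locally semi-simple point of $f_v$, the orbit $L_v\cdot w\cong H\cdot w$ is closed in $F$ (it is the unique closed $H$-orbit in $\{f_v\neq 0\}$), hence $G\cdot(v+w)$ is closed in the $G$-saturation, and one checks it is in fact closed in all of $V$ because it lies in $\{f\neq 0\}$ and $f$ cuts out a $G$-stable hypersurface whose complement is affine — so a closed orbit in the complement is closed in $V$.

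Alternatively, and perhaps more cleanly for the paper, I would argue purely at the level of the weight-space map $\phi_v^{\sigma^k}$ constructed in the proof of Theorem \ref{thm:mainapp}: the argument there shows $\op{SI}(G,V)_{\sigma^k}=\complex\cdot f^k$ and $\op{SI}(H,F)_{(\det^{e-d}\otimes\det^e\otimes\sigma')^k}=\complex\cdot f_v^k$, and that the restriction map is an isomorphism. Since $w$ is locally semi-simple for $f_v$, the point $w$ lies on the closed $H$-orbit in $\{f_v\neq 0\}$; translating via $\mu:G\times F\to V$, $\mu(g,w)=g(v+w)$, and using that $\mu$ is smooth and separates variables for $\sigma$-semi-invariants (equation \eqref{eq:basic}), one transports the closed-orbit condition: the fiber of the quotient map $\{f\neq 0\}\to\{f\neq 0\}/\!\!/G$ through $v+w$ is the image under $\mu$ of $G\times(H\cdot w)$, which is closed, so $G\cdot(v+w)$ is the unique closed orbit in that fiber — i.e. $v+w$ is locally semi-simple for $f$.

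The main obstacle is making the ``transport of closedness'' rigorous without invoking heavy machinery: one needs that a point of $\{f_v\neq 0\}$ lying on the closed $L_v$-orbit there also lies, after the inclusion $F\hookrightarrow V$ via $y\mapsto v+y$, on the closed $G$-orbit in $\{f\neq 0\}$. The honest way is Luna's étale slice theorem applied at $v$, which gives exactly the correspondence between closed orbits in a slice neighborhood and closed orbits in $V$; the only thing to verify is that $G\cdot v$ need not itself be closed (it is not, in general) but that this does not matter because we only need the correspondence for orbits in the saturation of the slice, and every orbit in $\{f\neq 0\}$ near the relevant fiber meets the slice. I would cite \cite{mostow} and the standard slice theorem for this, keeping the write-up short since the essential computations (the stabilizer $G_v$, the isomorphism $L_v\cong H$, the identification $f_v=f^{(2)}_1$) are already done in the proof of Theorem \ref{thm:mainapp}.
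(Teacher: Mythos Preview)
Your approach can be made to work, but it is considerably heavier than what the paper does, and it has a soft spot you partly acknowledge: Luna's \'etale slice theorem requires $G\cdot v$ to be closed, which it is not here (the $M_{m,n}$--component of $v$ lies on the \emph{open} orbit of full-rank matrices). You try to work around this by restricting to $\{f\neq 0\}$ and arguing with the saturation, but you never make precise why every $G$-orbit in $\{f\neq 0\}$ actually meets the slice $v+F$; without that, nothing ties the slice picture to the global closed orbit. This is exactly where the hypothesis $d\neq 0$ enters, and your write-up does not use it.

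The paper's proof bypasses all of this with a three-line closure argument, using the alternative characterization of the locally semi-simple point as the point lying in the closure of \emph{every} $G$-orbit in $\{f\neq 0\}$. Take any $z\in V$ with $f(z)\neq 0$. Because $d\neq 0$, the $\GL(m)$-weight of $f$ is nontrivial, so $f$ vanishes on matrices of rank $<m$; hence the $M_{m,n}$-component of $z$ has full rank and $G\cdot z$ contains some $v+w'$ with $w'\in F$. Then $f_v(w')=f(v+w')\neq 0$, and since $w$ is locally semi-simple for $f_v$ we have $w\in\overline{H\cdot w'}=\overline{L_v\cdot w'}$. As $L_v$ fixes $v$, this gives $v+w\in\overline{L_v\cdot(v+w')}\subset\overline{G\cdot z}$. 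Thus $v+w$ is in the closure of every orbit in $\{f\neq 0\}$, which is the required property.

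So the paper's route avoids any appeal to Luna entirely and makes the role of $d\neq 0$ transparent: it is precisely what forces every relevant $G$-orbit to meet the slice. Your approach trades this elementary closure chase for orbit-correspondence machinery that, in this non-closed-orbit setting, needs more justification than you give.
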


\begin{proof}
Take any $z \in V$ such that $f(z)\neq 0$. We want to show that $v+w \in \overline{Gz}$. Since $d\neq 0$, the orbit $G\cdot z$ has an element the form $v+w'$, where $w' \in F$. Since $f_v(w') = f(v+w') \neq 0$ and $w$ is the locally semi-simple point of $f_v$, we must have that $w \in \overline{H\cdot w'}=\overline{L_v \cdot w'}$. Since $L_v$ fixes $v$, we have $v+w \in \overline{L_v \cdot (v+w')}$. This shows that $v+w$ is in the closure of the $G$-orbit of $z$.
\end{proof}

As seen in the proof of Theorem \ref{thm:mainapp}, the stabilizer $G_v$ of $v$ decomposes as a semi-direct product $G_v = L_v \ltimes U$, where $U\cong \Hom_\complex(\complex^m, \complex^{n-m})$ is a unipotent subgroup.

\begin{proposition}\label{prop:algiso}
Consider the space $V=M_{m,n} \oplus F$ as in (\ref{eq:matrix}). Then the map $\phi_v$ from (\ref{eq:slice}) induces an isomorphism of algebras
\[ \phi_v : \op{SI}(G,V) \cong \op{SI}(H,F)^{U}.\]
\end{proposition}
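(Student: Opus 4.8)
The plan is to use the separation-of-variables map $\mu^*$ from Section~\ref{subsec:slice} to identify the image of $\phi_v$ precisely. Recall that $G_v = L_v \ltimes U$ with $L_v \cong H$ and $U \cong \Hom_\complex(\complex^m,\complex^{n-m})$ unipotent. First I would observe that $\phi_v$ is already known to be injective on each semi-invariant weight space by Lemma~\ref{lem:basic} (together with Remark~\ref{rem:works}, since the considerations apply to $L_v$), and in fact the argument in the proof of Theorem~\ref{thm:mainapp} shows that $\op{SI}(G,V)_{\sigma^k} = \complex\cdot f^k$ maps isomorphically onto $\complex\cdot f_v^k$ for every character $\sigma$ for which the hypotheses hold; but for the present statement we want the full ring map, so I would instead argue more structurally. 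Note that $\phi_v$ is manifestly an algebra homomorphism (it is the restriction to $\op{SI}$ of the algebra map $h\mapsto h_v$, which is just precomposition with $y\mapsto v+y$), and it lands in $\op{SI}(L_v,F) = \op{SI}(H,F)$; since $U$ fixes $v$ and acts on $F$, and $h_v$ arises from a $G$-semi-invariant, the image is moreover $U$-invariant, because a $G$-semi-invariant restricted along the $G_v$-fixed point $v$ produces a $G_v$-semi-invariant on $F$ whose weight is trivial on the unipotent part $U$. So $\phi_v$ does take values in $\op{SI}(H,F)^U$.

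Next I would prove surjectivity onto $\op{SI}(H,F)^U$. The key tool is the map $\mu\colon G\times F \to V$, $\mu(h,y)=h(v+y)$, whose differential at the identity is surjective (the tangent space to $G\cdot v$ is $\mathfrak g v = M_{m,n}$, complementary to $F$), so $\mu$ is dominant and $\mu^*\colon \complex[V]\hookrightarrow \complex[G]\otimes\complex[F]$ is injective. Given $g\in\op{SI}(H,F)^U$, I want to produce $f\in\op{SI}(G,V)$ with $f_v = g$. The recipe: the orbit map gives an open dense subset $G\cdot v + F$ of $V$ (more precisely the image of $\mu$ is dense, and over it one can descend functions that are suitably equivariant). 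Since $g$ is $G_v = L_v\ltimes U$-semi-invariant — it is $L_v\cong H$-semi-invariant by assumption and $U$-invariant — the function on $G\times F$ given by $(h,y)\mapsto \chi(h)\, g(y)$ (for the appropriate character $\chi$ extending the $G_v$-weight of $g$, using that $\sigma|_{L_v}$-type weights on $F$ come from restrictions of characters of $G$ in the $(\GL(m)\times\GL(n)\times G')$ setup) is right-$G_v$-invariant in the twisted sense and hence descends along $\mu$ to a rational semi-invariant $f$ on $V$; one then checks $f$ is regular because the complement of the dense orbit has codimension behavior controlled by the multiplicity-free hypothesis (alternatively, by the classification of prehomogeneous components, semi-invariants extend). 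Restricting $f$ to $v+F$ recovers $g$, i.e. $\phi_v(f) = g$. Combined with injectivity from Lemma~\ref{lem:basic}, $\phi_v$ is an algebra isomorphism $\op{SI}(G,V)\cong \op{SI}(H,F)^U$.

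The step I expect to be the main obstacle is the surjectivity: rigorously producing a \emph{regular} (not merely rational) $G$-semi-invariant $f$ on $V$ from a given $U$-invariant $H$-semi-invariant $g$ on $F$, and verifying that its weight under $G$ is well-defined. The descent along $\mu$ naturally yields a function regular on the dense open image of $\mu$, and one must argue it extends across the boundary; here I would invoke that for $G$ reductive and $V$ with $\sigma$ multiplicity-free, $\op{SI}(G,V)$ is a polynomial ring on the fundamental semi-invariants (Theorem~\ref{thm:satokimura} in the prehomogeneous case, and more generally the multiplicity-free structure), so one reduces to matching up weights, and the weight of the candidate $f$ is forced by the $G_v$-weight of $g$ together with the fact that characters of $\GL(m)\times\GL(n)\times G'$ restrict surjectively onto the relevant characters of $H$. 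The injectivity, $U$-invariance of the image, and the algebra-homomorphism property are all routine given the machinery of Section~\ref{subsec:slice}.
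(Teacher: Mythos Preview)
Your overall strategy---define the candidate preimage on the dense $G$-saturation of $v+F$ via the character twist, then extend---is exactly the paper's. However, two steps in your write-up are genuine gaps, and both come down to not using the assumption $m<n$ (which is implicit in the setup~(\ref{eq:matrix}) for this proposition; see the Remark immediately after it for the $m=n$ variant).

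\textbf{Injectivity.} Lemma~\ref{lem:basic} only gives injectivity of $\phi_v^\sigma$ on each individual weight space. To conclude that the full algebra map $\phi_v$ is injective you must also check that distinct $G$-weights are sent to distinct $H$-weights; otherwise two weight components could cancel in the image. The paper does this explicitly: a $G$-weight $\det^d\otimes\det^e\otimes\sigma'$ is sent to the $H$-weight $\det^{e-d}\otimes\det^e\otimes\sigma'$, and because $m<n$ the factor $\GL(n-m)$ is nontrivial, so one recovers $e$ (hence $d$) from the $H$-weight. You assert ``injectivity from Lemma~\ref{lem:basic}'' without this step, which is not sufficient.

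\textbf{Regularity of the extension.} You correctly identify this as the main obstacle, but your proposed justification is wrong: you appeal to a ``multiplicity-free hypothesis'' and to Theorem~\ref{thm:satokimura}, neither of which is assumed in Proposition~\ref{prop:algiso}. The statement is for an \emph{arbitrary} $F$ and $G'$, with no prehomogeneity or multiplicity-freeness imposed. The paper's argument is purely geometric and much simpler: the open set on which the descended function is defined is $(G\cdot v)\times F\subset M_{m,n}\oplus F$, and its complement is the locus of matrices of rank $<m$ in $M_{m,n}$ (times $F$), which has codimension $n-m+1\geq 2$ since $m<n$. Hartogs-type extension then gives regularity on all of $V$. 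Replace your appeal to multiplicity-freeness with this codimension argument and the proof goes through.
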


\begin{proof}
By Lemma \ref{lem:basic}, $\phi_v$ is injective on the level of weight spaces. A $G$-semi-invariant of weight $\det^d \otimes \det^e \otimes \sigma'$ is mapped to an $H$-semi-invariant of weight  $\det^{e-d} \otimes \det^e \otimes \sigma'$. Since $m<n$, this shows that different weight spaces are mapped to different weights spaces, so $\phi_v$ is injective.

Now we show that $\phi_v$ is surjective. Let $f' \in \op{SI}(H,F)^U$ be an $H$-semi-invariant of weight $\det^{a} \otimes \det^b \otimes \sigma'$, for some $a,b\in \integ$ and character $\sigma'$ of $G'$. Consider the character $\sigma$ of $G$ defined as $\sigma = \det^{b-a} \otimes \det^b \otimes \sigma'$. Consider the function $F$ defined on the open set $G\cdot v \times F \subset V$ by
\[ F(g\cdot v, y) = \sigma(g) \cdot f'(g^{-1} \cdot y), \mbox{ for } g\in G, y\in F.\]
Using that $f'$ is $G_v$-semi-invariant, we see that $F$ is a well-defined semi-invariant of weight $\sigma$. Since $m<n$, the open set $G\cdot v \times F$ has codimension $\geq 2$ in $V$. Hence $F$ extends to a global semi-invariant, and $F_v = f'$.
\end{proof}

\begin{remark}
We note that the results above regarding $b$-functions hold for the case $m=n$ in (\ref{eq:matrix}) as well. Moreover, in this case there is an algebra isomorphism analogous to Proposition \ref{prop:algiso}
\[\phi_v: \op{SI}(G,V) / (\det X - 1) \cong \op{SI}(H,F),\]
where $X$ is the generic matrix of variables on $M_{n,n}$. For results in this direction obtained by slicing at elements other then our choice $v$, cf. \cite[Section 4]{phd}.
\end{remark}

We conclude the section by mentioning that most results for $b$-functions of one variable can be extended readily to the case of $b$-functions of \textit{several} variables as in Lemma \ref{lem:several}. We will mention only the extension of Theorem \ref{thm:mainapp} to this case, the proof of which is analogous, \textit{mutatis mutandis}. 

\begin{theorem}\label{thm:multi}
Consider the space $V=M_{m,n}\oplus F$ as in (\ref{eq:matrix}), and let $\ul{f}=(f_1,\dots, f_l)$ be $G$-semi-invariants in $\complex[V]$ of weights $\sigma_1,\dots, \sigma_l$, respectively, where $\sigma_i = \det^{d_i} \otimes \det^{e_i} \otimes \sigma'_i$, for $i=1,\dots,l$, with $d_i,e_i\in \nat$ and $\sigma'_i$ a character of $G'$.  Assume the product $\prod_{i=1}^l \det^{e_i-d_i} \otimes \det^{e_i} \otimes \sigma'_i$ is a multiplicity-free character of $H$ in $\complex[F]$. Then the product $\sigma_1 \cdots \sigma_l$ is a multiplicity-free character of $G$ in $\complex[V]$. Moreover, the $b$-function of several variables decomposes as 
\[b_{\ul{f},\ul{m}}(\ul{s}) = [s]_{n-m,n}^{d_1,\dots,d_l} \cdot  b_{\ul{f}_v,\ul{m}}(\ul{s}),\] 
for any tuple $\ul{m}=(m_1,\dots,m_l)\in \nat^l$, where $b_{\ul{f}_v,\ul{m}}(\ul{s})$ is the $b$-function of several variables of the tuple  $\ul{f}_v=(f_{1,v},\dots, f_{l,v})$ of induced semi-invariants on the slice $(H,F)$.

%$$b_{\underline{m}}(\underline{s})=b_{v,\underline{m}}(\underline{s})\prod_{i=0}^{d-1} (d_1 s_1 + \dots + d_l s_l+r+i),$$
%where $d=\ds_{i=1}^l d_i m_i$.
\end{theorem}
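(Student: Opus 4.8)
The plan is to mimic the proof of Theorem \ref{thm:mainapp} verbatim, replacing the single semi-invariant $f$ by the tuple $\ul{f}=(f_1,\dots,f_l)$ and the one-variable $b$-function by the $b$-function of several variables from Lemma \ref{lem:several}. First I would set up the same slice data: the point $v=(\begin{bmatrix} I_m & 0_{n-m}\end{bmatrix},0)$, its stabilizer $G_v$, the reductive subgroup $L_v\cong H$, and the tangent space $\mathfrak{g}v=M_{m,n}$, so that $V=\mathfrak{g}v\oplus F$ as $L_v$-modules. The key multiplicity bookkeeping is this: for any multi-index $\ul{k}=(k_1,\dots,k_l)\in\nat^l$, the product $\underline{f}^{\ul k}=\prod_i f_i^{k_i}$ is a $G$-semi-invariant of weight $\prod_i \sigma_i^{k_i}$, and under $h\mapsto h_v$ it maps to an $H$-semi-invariant on $F$ of weight $\prod_i (\det^{e_i-d_i}\otimes\det^{e_i}\otimes\sigma_i')^{k_i}$. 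By hypothesis the product $\prod_i \det^{e_i-d_i}\otimes\det^{e_i}\otimes\sigma_i'$ is multiplicity-free in $\complex[F]$, so every such weight space $\op{SI}(H,F)_{\prod_i(\dots)^{k_i}}$ is at most one-dimensional, spanned by $\prod_i f_{i,v}^{k_i}$; since $m<n$, distinct $\ul{k}$ give distinct $H$-weights on $F$. By Lemma \ref{lem:basic} (with Remark \ref{rem:works}, slicing with the group $L_v$) the map $\phi_v$ is injective on weight spaces, so $\op{SI}(G,V)_{\prod_i\sigma_i^{k_i}}=\complex\cdot\underline{f}^{\ul k}$ is one-dimensional; in particular $\sigma_1\cdots\sigma_l$ is a multiplicity-free character of $G$ in $\complex[V]$, which is the first assertion.

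Next I would produce the decomposition of the $b$-function of several variables. Applying the expansion from Section \ref{subsec:mult1} in the reducible setting $V=M_{m,n}\oplus F$ to the single semi-invariant $\underline{f}^{\ul m}=\prod_i f_i^{m_i}$ of weight $\sigma=\prod_i\sigma_i^{m_i}$, we get $\underline{f}^{\ul m}(x,y)=\sum_j g_j^{(1)}(x)g_j^{(2)}(y)$ where $g_1^{(1)},\dots$ is a basis of the $\GL(m)\times\GL(n)$-irreducible $M_\lambda\subset\complex[M_{m,n}]$ with $\lambda=\det^{d}\otimes(d^m,0^{n-m})\otimes 1$ and $d=\sum_i m_id_i$. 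By the FFT the $g_j^{(1)}$ can be taken to be products of $d$ maximal minors, and the tuple of maximal minors is multiplicity-free (by \cite{bmax} via the FFT); its $d$th power, which spans $M_\lambda$, is therefore multiplicity-free as well. Now the same argument as in Theorem \ref{thm:maintheor} and Lemma \ref{lem:firstfac} applies: the first factor $b_1(\ul s)$ — obtained by letting $D_{\ul g}=\sum_j g_j^{*(1)}(\partial x)g_j^{(1)}(x)$ act — is computed by Schur's Lemma on the irreducible module spanned by $\underline{f}^{\ul{(\text{anything})}}$ of the appropriate degree, so it equals $P_{\ul g}(s)$ with $s=\sum_i d_is_i$, which by Theorem \ref{thm:powermax} (i.e.\ Proposition \ref{prop:pf} applied to the $d$th power of the maximal minors) is, up to constant, $[s]^{d_1,\dots,d_l}_{n-m,n}$ in Notation \ref{not:notation}. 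The second factor is extracted from the analogue of Theorem \ref{thm:mult1}(2): one gets an identity $\underline{f}^{*\ul m}(\partial y)\cdot\underline{f}^{\ul s+\ul m}(x,y)=b_2(\ul s)\,g_1^{(1)}(x)\,\underline{f}^{\ul s}(x,y)$ after dividing by the other minor factor, and specializing $x=v$ (where $g_1^{(1)}(v)=1$, $g_j^{(1)}(v)=0$ for $j\neq 1$, so $g_1^{(2)}=\underline{f}^{\ul m}_v$) turns this into $\underline{f}_v^{*\ul m}(\partial y)\cdot\underline{f}_v^{\ul s+\ul m}(y)=b_2(\ul s)\,\underline{f}_v^{\ul m}(y)$, which by Lemma \ref{lem:several} is exactly $b_{\ul{f}_v,\ul m}(\ul s)$ up to constant. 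Putting these together gives $b_{\ul f,\ul m}(\ul s)=[s]^{d_1,\dots,d_l}_{n-m,n}\cdot b_{\ul f_v,\ul m}(\ul s)$.

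A couple of technical points need care. One should check that $f_1^{*(2)}$ is indeed a $B$-highest-weight vector of weight $\lambda\cdot\sigma^{-1}$ and hence restricts to the dual $L$-semi-invariant of $\underline{f}^{\ul m}_v$ on $F$, so that after specialization the left-hand operator $\underline{f}_v^{*\ul m}(\partial y)$ is the correct dual tuple $\underline{f}_v^{*\ul m}$ appearing in Lemma \ref{lem:several} — this is the same verification as in the proof of Theorem \ref{thm:maintheor}, and it uses $\lambda|_{L_v}=1$, which follows because $f_1^{(1)}(v)=1$ forces the $L_v$-weight $\lambda|_{L_v}$ to be trivial. One also needs that $\prod_i\sigma_i'|_H$ multiplicity-free implies $\sigma|_{L_v}$ multiplicity-free on $F$, exactly as before. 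Since all the constituent results (Lemma \ref{lem:basic}, Theorem \ref{thm:mult1}, Theorem \ref{thm:maintheor}, Lemma \ref{lem:firstfac}, Theorem \ref{thm:powermax}, Lemma \ref{lem:several}) are already available, the proof is genuinely a \emph{mutatis mutandis} adaptation.

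The main obstacle I anticipate is purely notational rather than conceptual: keeping the multi-variable $\ul s$, the fixed exponent tuple $\ul m$, the ``collapsing'' variable $s=\sum_i d_is_i$, and the integer $d=\sum_i m_id_i$ straight, and making sure Schur's-Lemma step for $b_1(\ul s)$ is legitimate — i.e.\ that the polynomials $\underline{f}^{\ul k}(x,w)$ for fixed total weight all lie in one irreducible $G$-module of $\complex[E]$ so that $D_{\ul g}$ acts by a scalar depending only on $s$. This is where one must invoke that $(g_1^{(1)},\dots,g_p^{(1)})$ is a multiplicity-free tuple (so Definition \ref{def:tuple} and Proposition \ref{prop:pf} apply to its powers) and that $\underline{f}^{\ul m}$, and more generally $\underline{f}^{\ul k}$ for $\ul k$ proportional-in-total-degree to $\ul m$, expands with the same $M_\lambda^{(\cdot)}$ on the $E$-side — the careful statement is that for each positive integer $t$, every monomial $\underline{f}^{\ul k}$ with $\sum_i k_i d_i/d \cdot(\text{degree normalization})$ summing to $t$ lands in the single irreducible $M_{td}\subset\complex[M_{m,n}]$ spanned by products of $td$ maximal minors, and then $D_{\ul g}$ acts on it by $P_{\ul g}$ evaluated appropriately. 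Once that is pinned down, the rest follows.
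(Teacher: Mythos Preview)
Your proposal is correct and follows exactly the approach the paper indicates: the paper gives no proof beyond the sentence ``the proof of which is analogous, \textit{mutatis mutandis}'' to Theorem~\ref{thm:mainapp}, and your sketch is precisely that adaptation, including the correct identification of the first factor via Lemma~\ref{lem:firstfac}/Theorem~\ref{thm:powermax} and of the second factor via specialization at $v$. One harmless overreach: the hypothesis only guarantees that $\op{SI}(H,F)_{(\prod_i\tau_i)^k}$ is one-dimensional for \emph{diagonal} tuples $\ul{k}=(k,\dots,k)$, not for arbitrary $\ul{k}$, but that diagonal case is all you need to conclude $\sigma_1\cdots\sigma_l$ is multiplicity-free.
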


\section{Semi-invariants of quivers and the slice method}
\label{sec:quiv}

In this section we apply the methods the slice method from Section \ref{subsec:method} to semi-invariants of quivers.

\subsection{Background on quivers and their semi-invariants}\label{subsec:backquiv}

In this section we will introduce some basics of quivers and semi-invariants. For more background material, we refer the reader to \cite{elements,harwey}. We follow similar notation to that in \cite{en}.

A quiver $Q$ is an oriented graph, i.e. a pair $Q=(Q_0,Q_1)$ formed by the set of vertices $Q_0$ and the set of arrows $Q_1$. An arrow $a$ has a head $ha$, and tail $ta$, that are elements in $Q_0$:
\[\xymatrix{
ta \ar[r]^{a} & ha
}\]
%A vertex $x\in Q_0$ is called a sink (resp. source) if there is no arrow in $Q$ starting (resp. ending) in $x$.

We assume in throughout that $Q$ is a quiver \itshape without oriented cycles \normalfont.

A representation $V$ of $Q$ is a family of finite dimensional vector spaces $\{V(x)\,|\, x\in Q_0\}$ together with linear maps $\{V(a) : V(ta)\to V(ha)\, | \, a\in Q_1\}$. The dimension vector $\underline{d} (V)\in \nat^{Q_0}$ of a representation $V$ is the tuple $\Dim V:=(\dim V(x))_{x\in Q_0}$. A morphism $\phi:V\to W$ of two representations is a collection of linear maps $\phi = \{\phi(x) : V(x) \to W(x)\,| \,x\in Q_0\}$, with the property that for each $a\in Q_1$ we have $\phi(ha)V(a)=W(a)\phi(ta)$. Denote by $\Hom_Q(V,W)$ the vector space of morphisms of representations from $V$ to $W$. For two vectors $\alpha, \beta\in \integ^{Q_0}$, we define the Euler product
$$\langle \alpha, \beta \rangle = \ds_{x\in Q_0} \alpha_x \beta_x - \ds_{a\in Q_1} \alpha_{ta} \beta_{ha}.$$
Let $E$ denote the Euler matrix corresponding to the Euler product. Then $C=-E^{-1} \cdot E^t$ is the \textit{Coxeter transformation} of $Q$ (see \cite{elements}).

We define the vector space of representations with dimension vector $\alpha\in \nat^{Q_0}$ by
$$\Rep(Q,\alpha):=\displaystyle\bigoplus_{a\in Q_1} \Hom(\complex^{\alpha_{ta}},\complex^{\alpha_{ha}}).$$
The group 
$$\GL(\alpha):= \prod_{x\in Q_0} \GL(\alpha_x)$$
acts on $\Rep(Q,\alpha)$ in a natural way by changing basis at each vertex. Under this action, two representations lie in the same orbit if and only if they are isomorphic representations.

For any two representations $V$ and $W$, we have the following exact sequence:
\begin{equation}\label{eq:ringel}
\begin{array}{rlc}
0 \to \Hom_Q (V,W) \stackrel{i}{\longrightarrow} \displaystyle\bigoplus_{x \in Q_0}& \!\!\!\!\!\Hom(V(x),W(x)) & \\
& \stackrel{d^V_W}{\longrightarrow}  \displaystyle\bigoplus_{a\in Q_1} \Hom(V(ta),W(ha)) \stackrel{p}{\longrightarrow} \Ext_Q(V,W)\to 0 &
\end{array}
\end{equation}
Here, the map $i$ is the inclusion, $d_W^V$ is given by
$$\{\phi(x)\}_{x\in Q_0} \mapsto \{\phi(ha)V(a) - W(a)\phi(ta)\}_{a\in Q_1}$$
and the map $p$ builds an extension of $V$ and $W$ by adding the maps $V(ta)\to W(ha)$ to the direct sum $V\oplus W$. From the exact sequence (\ref{eq:ringel}) we have that 
\[\langle \Dim V,\Dim W \rangle = \dim \Hom_Q(V,W) - \dim \Ext_Q(V,W).\]
The orbit $\mathcal{O}_V$ is dense in $\Rep(Q,\alpha)$ if and only if $\Ext_Q(V,V)=0$, in which case we say that $V$ is a \textit{generic} representation, and $\alpha$ a \textit{prehomogeneous dimension vector}.

Now we turn to semi-invariants of a quiver representation space $\Rep(Q,\beta)$. As in Section \ref{sec:b-func}, form the ring of semi-invariants $\op{SI}(Q,\beta)\subset \complex[\Rep(Q,\beta)]$ by 
\[\op{SI}(Q,\beta)=\bigoplus_\sigma \op{SI}(Q,\beta)_\sigma.\]
Here $\sigma$ runs through all the characters of $\GL(\beta)$. Each character $\sigma$ of $\GL(\beta)$ is a product of determinants, that is, of the form 
$\prod_{x\in Q_0} \op{det}_x^{\sigma(x)},$ where $\op{det}_x$ is the determinant function on $\GL(\beta_x)$. In this way, we will view a character $\sigma$ as a function $\sigma : Q_0 \to \integ$, or equivalently, as an element $\sigma\in \Hom_{\integ}(\integ^{Q_0},\integ)$. With this convention, we view characters as duals to dimension vectors, namely:
$$\sigma(\beta)=\ds_{x\in Q_0} \sigma(x)\beta_x.$$

%Following \cite{shme}, for a semi-invariant $f\in \op{SI}(Q,\beta)$, we say that a representation $A\in \Rep(Q,\beta)$ is \itshape locally semi-simple \normalfont (corresponding to $f$), if the orbit $\mathcal{O}_A$ of $A$ is closed in the principal open set defined by $f\neq 0$. Such an orbit is unique if and only if the weight of $f$ is multiplicity-free. In other words, a locally semi-simple representation corresponding to $f$ is a minimal representation $A$ (with respect to containment of orbit closures) for which $f(A)\neq 0$. 

We recall the definition of an important class of determinantal semi-invariants, first constructed by Schofield in \cite{scho}. Fix two dimension vectors $\alpha,\beta$, such that $\langle \alpha, \beta \rangle=0$. The latter condition says that for every $V\in \Rep(Q,\alpha)$ and $W\in \Rep(Q,\beta)$ the matrix of the map $d^V_W$ in (\ref{eq:ringel}) will be a square matrix. We define the semi-invariant $c$ of the action of $\GL(\alpha)\times\GL(\beta)$ on $\Rep(Q,\alpha)\times \Rep(Q,\beta)$ by $c(V,W):=\det d^V_W$. Note that we have
$$c(V,W)=0 \Leftrightarrow \Hom(V,W)\neq 0 \Leftrightarrow \Ext(V,W)\neq 0.$$
Next, for a fixed $V$, restricting $c$ to $\{V\} \times \Rep(Q,\beta)$ defines a semi-invariant $c^V\in \op{SI}(Q,\beta)$. Similarly, for a fixed $W$, restricting $c$ to $\Rep(Q,\alpha)\times \{W\}$, we get a semi-invariant $c_W\in \op{SI}(Q,\alpha)$. The weight of $c^V$ is $\langle \alpha, \cdot \rangle \in \Hom_{\integ}(\integ^{Q_0},\integ)$, and the weight of $c_W$ is $-\langle \cdot, \beta \rangle$. The semi-invariants $c^V$ and $c_W$ are well-defined up to scalar, that is, if $V$ is isomorphic to $V'$, then $c^V$ and $c^{V'}$ are equal up to a scalar. %Furthermore, taking any projective (resp. injective) resolution (not the canonical one that gives (\ref{eq:ringel}) doesn't affect the outcome up to a scalar. We will always ignore such scalars.

\begin{theorem}[{\cite{harwey,schovan}}]\label{thm:span}
For a fixed dimension vector $\beta$, the ring of semi-invariants $\op{SI}(Q,\beta)$ is spanned by the semi-invariants $c^V$, with $\langle \Dim V,\beta \rangle=0$. The analogous result holds for the semi-invariants $c_W$.
\end{theorem}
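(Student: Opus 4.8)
The plan is to pass to weight spaces, pin down which weights can occur, and then reduce the spanning statement to a block-matrix computation governed by the first fundamental theorem, transporting the outcome along reflection functors to reach an arbitrary acyclic $Q$.

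First I would decompose $\op{SI}(Q,\beta)=\bigoplus_\sigma\op{SI}(Q,\beta)_\sigma$ and reduce to a single weight $\sigma$. The scalars $(\lambda\cdot\op{Id}_{\beta_x})_{x\in Q_0}$, $\lambda\in\complex^\times$, form a subgroup of $\GL(\beta)$ acting trivially on $\Rep(Q,\beta)$, so a nonzero $f\in\op{SI}(Q,\beta)_\sigma$ satisfies $\lambda^{\sigma(\beta)}f=f$ for all $\lambda$, forcing $\sigma(\beta)=0$. Since $Q$ is acyclic, the Euler matrix $E$ is unitriangular in a topological ordering of $Q_0$, hence invertible over $\integ$, so $\sigma=\langle\alpha,\cdot\,\rangle$ for a unique $\alpha\in\integ^{Q_0}$, and when $\alpha$ is an honest dimension vector we get $\langle\alpha,\beta\rangle=\sigma(\beta)=0$; thus every $c^V$ with $\Dim V$ such an $\alpha$ does lie in $\op{SI}(Q,\beta)_\sigma$ (as noted before the statement). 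What remains is to show these $c^V$ span $\op{SI}(Q,\beta)_\sigma$, which in particular will entail that $\op{SI}(Q,\beta)_\sigma\neq 0$ only when $\alpha$ can be chosen nonnegative.

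For the spanning I would first dispose of the bipartite case: if every arrow of $Q$ runs from a source to a sink, then $\Rep(Q,\beta)$ is a sum of Hom-spaces $\Hom(\complex^{\beta_{ta}},\complex^{\beta_{ha}})$, and Cauchy's formula expands $\complex[\Rep(Q,\beta)]$ as a $\GL(\beta)$-module into tensor products of Schur functors at the two ends. Taking $\op{SL}(\beta)$-invariants of weight $\sigma$ isolates the summands whose Schur functors are rectangular at each vertex, and by the first fundamental theorem for $\GL$ (see \cite[XI, \S1.2]{proc}) every such invariant is a polynomial in the maximal minors of the block matrix assembled from the arrow maps; each such minor is, up to a unit, a value $c^V(-)$ for a suitable thin representation $V$ of $Q$, and matching weights identifies the span with the asserted one. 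For a general acyclic $Q$ I would transport this along reflection functors at sinks (APR tilts): by Schofield--Van den Bergh \cite{schovan}, reflection at a sink induces a graded isomorphism of semi-invariant rings $\op{SI}(Q,\beta)\cong\op{SI}(Q',\beta')$ carrying each $c^V$ to $c^{V'}$ for the reflected module; iterating reaches a bipartite orientation (or, alternatively, one inducts on $|Q_1|$ by splitting off a source or sink), and the bipartite case closes the argument for the given $\sigma$.

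Finally, the statement for the $c_W$ follows by symmetry: applying the result just obtained to the opposite quiver $Q^{\mathrm{op}}$ (still acyclic) together with the $\GL(\beta)$-equivariant identification $\Rep(Q,\beta)^*\cong\Rep(Q^{\mathrm{op}},\beta)$ turns each $c_W$ for $Q$ into a $c^{W}$ for $Q^{\mathrm{op}}$ up to a unit. The step I expect to be the real obstacle is the reduction of a general $Q$ to the bipartite case, together with the companion fact that the weights occurring in $\op{SI}(Q,\beta)$ are exactly those realized by honest (nonnegative) dimension vectors $\alpha$; that is, showing the supply of determinantal semi-invariants $c^V$ is genuinely exhaustive and that the reflection/FFT bookkeeping never drops a weight.
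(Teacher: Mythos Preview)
The paper does not contain a proof of this statement: Theorem~\ref{thm:span} is quoted from the literature (Derksen--Weyman \cite{harwey} and Schofield--Van den Bergh \cite{schovan}) and used as a black box. So there is no ``paper's own proof'' to compare against.

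As for the proposal itself, the weight-space reduction and the observation that $\sigma(\beta)=0$ via the diagonal $\complex^\times$ are fine, and the Cauchy/FFT strategy is indeed the core of the Derksen--Weyman argument. However, your reduction step has a genuine gap: an acyclic quiver need not admit a bipartite orientation at all (this requires the \emph{underlying graph} to be bipartite, which fails whenever it contains an odd cycle), and sink/source reflections only reorient arrows, they do not change the underlying graph. Your parenthetical fallback ``induct on $|Q_1|$ by splitting off a source or sink'' is too vague to close this. The actual Derksen--Weyman proof avoids any bipartite reduction: it applies the Cauchy decomposition arrow-by-arrow on the full $\Rep(Q,\beta)$, extracts the $\op{SL}(\beta)$-invariants directly, and then identifies the resulting basis elements with the $c^V$ via an explicit combinatorial/FFT argument at each vertex simultaneously. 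The Schofield--Van den Bergh approach is different again, going through a spectral-sequence/tilting argument rather than classical invariant theory. If you want to complete your sketch, drop the bipartite detour and carry out the Cauchy/FFT computation on the general acyclic $Q$ directly; the bookkeeping is heavier but there is no structural obstruction.
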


By \cite[Lemma 1]{harwey}, the algebra of semi-invariants $\op{SI}(Q,\beta)$ is generated by semi-invariants $c^V$, with $\langle \Dim V, \cdot \rangle=0$ and $V$ a Schur representation (that is, $\End_Q(V)=\complex$). We call a prehomogeneous dimension vector $\alpha$ a \itshape real Schur root\normalfont, if the generic representation $V\in\Rep(Q,\alpha)$  is a Schur representation. Note that in this case we have $\langle \alpha, \alpha \rangle=1$. Examples of real Schur roots include the dimension vectors of preprojective and preinjective representations (see \cite{en}).

In the case $\beta$ is a prehomogeneous dimension vector, $\op{SI}(Q,\beta)$ a polynomial ring generated by semi-invariants $c^{V_i}$, where $V_i$ are the simple objects in an appropriate perpendicular category (see \cite[Theorem 4.3]{scho}).

To find semi-invaraints with multiplicity-free weights on spaces $\Rep(Q,\beta)$  with $\beta$ not necessarily prehomogeneous, the following reciprocity result is useful:

\begin{theorem}[{\cite[Corollary 1]{harwey}}]
\label{thm:recip}
Let $\alpha$ and $\beta$ be two dimension vectors, with $\langle \alpha,\beta \rangle=0$. Then
$$\dim \op{SI}(Q,\beta)_{\langle \alpha, \cdot \rangle} = \dim \op{SI}(Q,\alpha)_{-\langle \cdot , \beta \rangle}.$$
\end{theorem}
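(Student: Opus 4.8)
The plan is to realize both dimensions as the rank of one and the same bivariate polynomial --- the Schofield semi-invariant $c(V,W)=\det d^V_W$ on $\Rep(Q,\alpha)\times\Rep(Q,\beta)$, which is well-defined precisely because $\langle\alpha,\beta\rangle=0$ --- and then to read off each of the two weight-space dimensions from it using Theorem \ref{thm:span}. Concretely, I view $c$ as an element of $\complex[\Rep(Q,\alpha)]\otimes\complex[\Rep(Q,\beta)]$, write $c=\sum_{i=1}^r p_i\otimes q_i$ with $r$ minimal (so the $p_i$ are linearly independent in $\complex[\Rep(Q,\alpha)]$ and the $q_i$ are linearly independent in $\complex[\Rep(Q,\beta)]$), and observe that, since linearly independent polynomials are separated by point evaluations, the specializations $c(\cdot,W)=\sum_i q_i(W)\,p_i$ span exactly $\op{span}\{p_1,\dots,p_r\}$ as $W$ ranges over $\Rep(Q,\beta)$, while the $c(V,\cdot)$ span $\op{span}\{q_1,\dots,q_r\}$; both of these spaces have dimension $r$.

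It then remains to identify these two spans with the two weight spaces in the statement. Here I would first record the bookkeeping between characters and dimension vectors: since $Q$ has no oriented cycles one can order $Q_0$ so that every arrow increases the vertex index, which makes the Euler matrix $E$ upper unitriangular and hence invertible (this invertibility is already used implicitly in the formula $C=-E^{-1}E^t$); therefore $\gamma\mapsto\langle\gamma,\cdot\rangle$ and $\gamma\mapsto\langle\cdot,\gamma\rangle$ are injective on $\integ^{Q_0}$. Now Theorem \ref{thm:span} says $\op{SI}(Q,\beta)$ is spanned by the $c^V=c(V,\cdot)$ with $\langle\Dim V,\beta\rangle=0$, and such a $c^V$ has weight $\langle\Dim V,\cdot\rangle$; passing to the graded piece of weight $\langle\alpha,\cdot\rangle$ and using the injectivity above (which simultaneously forces $\langle\Dim V,\beta\rangle=\langle\alpha,\beta\rangle=0$), one gets $\op{SI}(Q,\beta)_{\langle\alpha,\cdot\rangle}=\op{span}\{c(V,\cdot):V\in\Rep(Q,\alpha)\}$. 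The analogous statement of Theorem \ref{thm:span} for the $c_W$ yields, in the same fashion, $\op{SI}(Q,\alpha)_{-\langle\cdot,\beta\rangle}=\op{span}\{c(\cdot,W):W\in\Rep(Q,\beta)\}$. Combining this with the previous paragraph gives $\dim\op{SI}(Q,\beta)_{\langle\alpha,\cdot\rangle}=r=\dim\op{SI}(Q,\alpha)_{-\langle\cdot,\beta\rangle}$; the degenerate case $c\equiv 0$ (i.e.\ $r=0$, both sides zero) needs no separate treatment.

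I expect the only genuinely delicate step to be this last identification --- namely, checking that $\op{span}\{c(V,\cdot):V\in\Rep(Q,\alpha)\}$ is the \emph{full} weight space $\op{SI}(Q,\beta)_{\langle\alpha,\cdot\rangle}$ rather than a proper subspace of it. This is exactly where both inputs are essential: Theorem \ref{thm:span} to guarantee that \emph{all} semi-invariants, not just the obvious ones, arise from Schofield semi-invariants, and the non-degeneracy of the Euler form (i.e.\ acyclicity of $Q$) to rule out a semi-invariant of weight $\langle\alpha,\cdot\rangle$ arising from some $c^V$ with $\Dim V\neq\alpha$. Everything else --- the tensor-rank observation and the separation-of-polynomials argument --- is routine linear algebra.
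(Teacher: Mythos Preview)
The paper does not give its own proof of this statement; it is quoted verbatim from \cite[Corollary 1]{harwey}. Your argument is correct and is, in fact, the standard proof found in that reference: one realizes both weight spaces as the two ``one-variable shadows'' of the single bivariate Schofield semi-invariant $c\in\complex[\Rep(Q,\alpha)]\otimes\complex[\Rep(Q,\beta)]$, and the tensor-rank observation (that the spans of $\{c(V,\cdot)\}_V$ and $\{c(\cdot,W)\}_W$ have the same dimension) does the rest. Your handling of the point you flagged as delicate is also correct: since $\op{SI}(Q,\beta)=\bigoplus_\sigma\op{SI}(Q,\beta)_\sigma$ is a direct sum over characters, the spanning statement of Theorem~\ref{thm:span} passes to each graded piece, and the invertibility of the Euler matrix (from acyclicity of $Q$) then forces $\Dim V=\alpha$ for any $c^V$ contributing to the weight-$\langle\alpha,\cdot\rangle$ component.
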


In particular, if $f$ is a non-zero semi-invariant of weight $\langle \alpha , \cdot \rangle$, with $\alpha$ prehomogeneous, then $f=c^V \in \op{SI}(Q,\beta)$ has multiplicity-free weight, where $V$ is the generic representation in $\Rep(Q,\alpha)$. 

\begin{remark}\label{rem:fulton} 
By the proof of the Generalized Fulton Conjecture (see \cite[Theorem 2.22]{combi}), in order to show that a character $\sigma$ is multiplicity-free in $\op{SI}(Q,\beta)$, it is enough to show that $\dim \op{SI}(Q,\beta)_\sigma = 1$ (i.e. one does not need to check this for higher powers of $\sigma$).
\end{remark}

One can write down the semi-invariants $c^V$ explicitly as determinants of suitable block matrices (see \cite[Remark 3.3]{en}).

\begin{example}
\label{eq:deefor}
Let $Q$ be the following $\D_4$ quiver:
\[\xymatrix{
 & 2 \ar[d] & \\
1 \ar[r] & 4 & \ar[l] 3
}\]
Let $V$ be the indecomposable $V=\begin{matrix} & 1 & \\ 1 & 1 & 1\end{matrix}$. Then $\langle \alpha, \beta \rangle=0$ gives $\beta=(\beta_1,\beta_2,\beta_3,\beta_4)$ with $\beta_1+\beta_2+\beta_3=2\beta_4$. Let $X,Y,Z$ be generic matrices of variables, with $X\in M_{\beta_4,\beta_1},Y\in M_{\beta_4,\beta_2},Z\in M_{\beta_4,\beta_3}$. Then $c^V$ is the determinant of the following square matrix of variables:
$$\det\begin{pmatrix}
X & 0 & 0 & I_{\beta_4}\\
0 & Y & 0 & I_{\beta_4}\\
0 & 0 & Z & I_{\beta_4}
\end{pmatrix}=\det\begin{pmatrix}
X & Y & 0 \\
0 & Y & Z
\end{pmatrix}.$$
Also, $c^V\neq 0$ if and only if $\beta_i\leq \beta_4$, for $i=1,2,3$, and $c^V$ is irreducible if and only if all these inequalities are strict.
\end{example}

In general, slicing a quiver results in a more complicated quiver. However, in some cases we can view a semi-invariant of a quiver as a function on a simpler quiver.
 
 \begin{lemma}[{\cite[Lemma 3.4]{en}}]\label{lem:simp}
Let $Q$ be a quiver without oriented cycles, $\beta$ a dimension vector and $f$ a semi-invariant on $\Rep(Q,\beta)$ of weight $\sigma=\langle \alpha , \cdot \rangle$. Then we can view $f$ as a semi-invariant on a new quiver with new weight according to the following simplification rules:
\begin{itemize}
\item[(a)] If $\alpha_1=0$, then we have (we put the values of $\alpha$ on top of $\beta$):
 \[\vcenter{\vbox{\xymatrix@R-2.3pc@C+1.8pc{
 & \stackrel{\alpha_{x_1}}{\beta_{x_1}} \dots \\
 & \stackrel{\alpha_{x_2}}{\beta_{x_2}} \dots \\
\stackrel{0}{\beta_{1}} \ar[uur] \ar[ur] & \dots \\
 & \stackrel{\alpha_{y_2}}{\beta_{y_2}} \ar[ul] \dots \\
 & \stackrel{\alpha_{y_1}}{\beta_{y_1}} \ar[uul] \dots
}}}\hspace{0.4in}{\xymatrix@C+1.5pc{ \ar@{~>}[r] & }}\hspace{0.4in}
 \vcenter{\vbox{\xymatrix@R-2pc@C+2pc{
 & \stackrel{\alpha_{x_1}}{\beta_{x_1}} \dots \\
 & \stackrel{\alpha_{x_2}}{\beta_{x_2}} \dots \\
 & \hspace{0.2in} \dots \\
\stackrel{0}{\beta_{1}} & \stackrel{\alpha_{y_2}}{\beta_{y_2}} \ar[l] \dots \\
\stackrel{0}{\beta_{1}} & \stackrel{\alpha_{y_1}}{\beta_{y_1}} \ar[l] \dots
}}} \]
\item[(b)] Write $\sigma=-\langle \cdot, \alpha^* \rangle$. If $\alpha^*_1=0$, then the same simplification rule holds as in part (a) by replacing $\alpha$ with $\alpha^*$, with the arrows reversed.
\end{itemize}
\end{lemma}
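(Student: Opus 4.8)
\emph{Proof idea.} The plan is to reduce the statement to Schofield's determinantal semi-invariants and to read it off the matrix $d^V_W$ from the exact sequence (\ref{eq:ringel}). Since $Q$ has no oriented cycles, the Euler matrix is invertible, so a character $\sigma$ determines the dimension vector $\alpha$ with $\sigma=\langle\alpha,\cdot\rangle$ uniquely; by Theorem \ref{thm:span} the space $\op{SI}(Q,\beta)_\sigma$ is spanned by the semi-invariants $c^V$ with $\Dim V=\alpha$, and each such $V$ satisfies $V(1)=0$ because $\alpha_1=0$. Hence it is enough to prove part (a) for a single such $c^V$; part (b) will be handled likewise using the spanning of $\op{SI}(Q,\beta)_\sigma$ by the $c_W$ with $\Dim W=\alpha^*$.

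First I would examine $d^V_W$ block by block, indexing the columns of its matrix by the summands $\Hom(V(x),W(x))$ and the rows by the summands $\Hom(V(ta),W(ha))$. Since $V(1)=0$: the column at vertex $1$ is zero-dimensional; for every outgoing arrow $a\colon1\to x_i$ the row $\Hom(V(1),W(x_i))$ is zero-dimensional; and for every incoming arrow $a\colon y_j\to1$ the corresponding row block sends $\{\phi(x)\}\mapsto\phi(1)V(a)-W(a)\phi(y_j)=-W(a)\circ\phi(y_j)$, as $\phi(1)=0$. In particular $\det d^V_W$ does not involve the components $W(a)$ with $a\colon1\to x_i$, so $c^V$ factors through the projection forgetting those components.

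Next I would introduce $Q'$, $\beta'$, $\alpha'$ exactly as in the displayed simplification: delete the arrows $1\to x_i$, replace the vertex $1$ by one copy $1^{(j)}$ for each incoming arrow $y_j\to1$ (now $y_j\to1^{(j)}$), and set $\beta'_{1^{(j)}}=\beta_1$, $\alpha'_{1^{(j)}}=0$, keeping everything else. I would check that (i) $\langle\alpha',\beta'\rangle=\langle\alpha,\beta\rangle=0$, because every term of the Euler form that is altered carries a factor $\alpha_1=0$; (ii) the forgetful map $\pi\colon\Rep(Q,\beta)\to\Rep(Q',\beta')$ is surjective and equivariant for the embedding $\GL(\beta)\hookrightarrow\GL(\beta')$ coming from the diagonal $\GL(\beta_1)\hookrightarrow\prod_j\GL(\beta_1)$; and (iii) restricting the character $\langle\alpha',\cdot\rangle$ along this embedding returns $\langle\alpha,\cdot\rangle$, the only point needing care being $\sum_j\langle\alpha',e_{1^{(j)}}\rangle=-\sum_j\alpha_{y_j}=\langle\alpha,e_1\rangle$. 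Letting $V'$ be the representation of $Q'$ with $V'(x)=V(x)$, $V'(1^{(j)})=0$ and the obvious maps, the block computation above shows that, under the canonical identification of sources and targets, $d^V_W=d^{V'}_{\pi(W)}$; hence $c^V(W)=\det d^V_W=c^{V'}(\pi(W))$ with $\langle\Dim V',\beta'\rangle=0$. Summing over the spanning $c^{V_i}$ identifies $f$ with a semi-invariant on $\Rep(Q',\beta')$ of weight $\langle\alpha',\cdot\rangle$, proving (a). For (b), the same block analysis of $d^V_W$ with $W(1)=0$ in place of $V(1)=0$ kills the incoming arrows at vertex $1$ and turns each outgoing arrow $1\to x_i$ into a block that is precomposition with $V(a)$; this is precisely rule (a) with the arrows at vertex $1$ reversed (equivalently, apply (a) to $Q^{\mathrm{op}}$).

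I expect the main obstacle to be the bookkeeping of the third paragraph: identifying $\Rep(Q,\beta)$ with $\Rep(Q',\beta')\oplus\bigoplus_i\Hom(\complex^{\beta_1},\complex^{\beta_{x_i}})$ compatibly with the groups $\GL(\beta)\subset\GL(\beta')$, and confirming that $\langle\alpha',\cdot\rangle$ is the correct new weight. The Euler-form identities and the equivariance are routine but must be carried out without indexing or sign errors, and one should check that after deleting the zero-dimensional rows and columns the matrix $d^V_W$ is still square of the same size (guaranteed since $\langle\alpha,\beta\rangle=0$).
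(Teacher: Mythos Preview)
The paper does not prove this lemma; it is quoted verbatim from \cite[Lemma 3.4]{en} and used as a black box. So there is no ``paper's proof'' to compare against, and what you have written is a self-contained justification.

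Your argument is correct. The reduction to Schofield semi-invariants via Theorem~\ref{thm:span} is legitimate because the Euler matrix is invertible on an acyclic quiver, so every $c^V$ spanning $\op{SI}(Q,\beta)_{\langle\alpha,\cdot\rangle}$ has $\Dim V=\alpha$ and hence $V(1)=0$. Your block reading of $d^V_W$ is accurate: with $V(1)=0$ the column at vertex~$1$ and the rows for outgoing arrows $1\to x_i$ vanish, while each incoming row $y_j\to 1$ reduces to $-W(a)\phi(y_j)$, so the determinant is unchanged when the outgoing arrows are deleted and the target of each incoming arrow is given its own copy of vertex~$1$. The weight computation $\sum_j\langle\alpha',e_{1^{(j)}}\rangle_{Q'}=-\sum_j\alpha_{y_j}=\langle\alpha,e_1\rangle_Q$ is right, and the identification $d^V_W=d^{V'}_{\pi(W)}$ is clean because the surviving row and column blocks are literally the same linear maps. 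The dual argument for part (b) using the spanning by $c_W$ with $W(1)=0$ is the natural counterpart and works the same way.

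One small remark on exposition: rather than speaking of an embedding $\GL(\beta)\hookrightarrow\GL(\beta')$, it is slightly cleaner to say that $c^{V'}\circ\pi$ is a $\GL(\beta)$-semi-invariant because $\pi$ is $\GL(\beta)$-equivariant and $c^{V'}$ is in particular semi-invariant for the diagonal subgroup; the embedding language is correct but invites confusion about which group is acting where. This is cosmetic and does not affect the validity of the proof.
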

 
If we write $\sigma=\langle \alpha, \cdot \rangle = -\langle \cdot, \alpha^* \rangle$, then $\alpha^* = C\cdot \alpha$, where $C$ denotes the Coxeter transformation. This transformation can understood as applying reflections to sinks successively once at each vertex of the quiver (see \cite{elements},\cite{en}). In particular, if vertex $1$ in part (b) of the above lemma is a sink, then $\alpha_1^*=0$ is equivalent to $\alpha_1 = \ds_{i} \alpha_{x_i}$, in which case one can simply delete vertex $1$.
 
\subsection{The slice method for quivers}\label{subsec:slicequiv} 
 
%In this section we apply results from Section \ref{subsec:method} to compute roots of the b-functions of some quiver semi-invariants.

%For a vertex $x\in Q_0$, denote by $\epsilon_x$ the function on $Q_0$ defined by $\epsilon_x(x)=1$, and $\epsilon_x(y)=0$, for $x\neq y$. Denote by $S_x$ the simple representation of $Q$ corresponding to a vertex $x\in Q_0$, that is, the representation with all linear maps $0$ and $S_x(y)=\complex^{\epsilon_x(y)}$. 

Let $Q$ be a quiver without oriented cycles. We say that an that arrow $a\in Q_1$ is a \itshape 1-source \normalfont (resp. \itshape 1-sink\normalfont) if $ta$ (resp. $ha$) is not a vertex of any arrow other than $a$. We will slice at such arrows $a$ as in Section \ref{subsec:method}. The following is an immediate consequence of Theorem \ref{thm:mainapp}:

\begin{theorem}\label{thm:bquiv}
Let $Q$ be a quiver, $\beta$ be a dimension vector. Let $\vec{a}\in Q_1$ a 1-source or 1-sink arrow, number its vertices by $1,2$, and assume $\beta_1 \leq \beta_2$.  The slice at the arrow $\vec{a}$ is a representation space $\Rep(Q_a,\beta_a)$ corresponding to the following quiver (where the orientation of $\vec{a}$ is arbitrary):
\[(Q,\beta): \hspace{0.2in} \vcenter{\vbox{\xymatrix@R-1.3pc{
 & & \beta_{x_1}  \dots\\
 & & \beta_{x_2}  \dots\\
\beta_{1}  \ar@{-}[r]^a & \beta_{2} \ar[uur] \ar[ur] & \dots\\
 & & \beta_{y_2} \ar[ul] \dots\\
 & & \beta_{y_1} \ar[uul] \dots
}}}\hspace{0.1in}\xymatrix@C+1pc{ \ar@{~>}[r] & }
(Q_a,\beta_a): \hspace{0.2in} \vcenter{\vbox{\xymatrix@R-1.3pc{
 & & \beta_{x_1}  \dots \\
 & & \beta_{x_2}  \dots \\
\beta_{1}  \ar@{-->}[uurr] \ar@{-->}[urr] & \beta_{2}-\beta_{1} \ar[uur] \ar[ur] &\dots \\
 & & \beta_{y_2} \ar[ul] \ar@{-->}[ull] \dots\\
 & & \beta_{y_1} \ar[uul] \ar@{-->}[uull] \dots
}}}\]

Let $f$ be a semi-invariant on $\Rep(Q,\beta)$ of weight $\sigma=\langle \alpha,\cdot \rangle$ and $f_a$ be the induced semi-invariant on $\Rep(Q_a,\beta_a)$ with induced weight $\sigma_a= \langle \alpha_a ,\cdot \rangle$. Under the natural correspondence of vertices between $Q$ and $Q_a$, $\sigma_a$ differs from $\sigma$ only at vertex $1$, with $\sigma_a(1)=\sigma(1)+\sigma(2)$. Moreover, if $\sigma_a$ is a multiplicity-free weight on $\Rep(Q_a,\beta_a)$, then $\sigma$ is multiplicity-free as well and we have
$$b_f(s)=b_{f_a}(s)\cdot [s]^{|\sigma_1|}_{\beta_2-\beta_1,\beta_2}.$$
\end{theorem}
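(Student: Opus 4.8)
The plan is to realize Theorem \ref{thm:bquiv} as a direct specialization of Theorem \ref{thm:mainapp}, so the bulk of the work is checking that slicing at a 1-source or 1-sink arrow $\vec a$ produces exactly the ambient representation considered in (\ref{eq:matrix}), and that the hypothesis of Theorem \ref{thm:mainapp} translates into the multiplicity-freeness of $\sigma_a$. First I would set up the decomposition $\Rep(Q,\beta) = M_{\beta_1,\beta_2}\oplus F$ (or $M_{\beta_2,\beta_1}\oplus F$, depending on the orientation of $\vec a$), where the $M$-factor is $\Hom(\complex^{\beta_1},\complex^{\beta_2})$ coming from the arrow $\vec a$ and $F = \bigoplus_{b\neq a}\Hom(\complex^{\beta_{tb}},\complex^{\beta_{hb}})$ is the span of all the other arrows. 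The group $\GL(\beta)$ is then $\GL(\beta_1)\times\GL(\beta_2)\times G'$ with $G' = \prod_{x\neq 1,2}\GL(\beta_x)$, and because $\vec a$ is a 1-source (resp. 1-sink), vertex $1$ touches no other arrow, so the $\GL(\beta_1)$-factor acts on $F$ trivially: this is precisely the structural feature (\ref{eq:matrix}) demands. Here $m=\beta_1$, $n=\beta_2$, and $m\le n$ by the assumption $\beta_1\le\beta_2$; the semi-invariant $f$ of weight $\sigma=\langle\alpha,\cdot\rangle$ has weight component $\det^{\sigma(1)}\otimes\det^{\sigma(2)}\otimes\sigma'$ at the $(\GL(\beta_1),\GL(\beta_2),G')$-factors, so in the notation of Theorem \ref{thm:mainapp} we have $d = \sigma(1) = \sigma_1$ and $e = \sigma(2)$.

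Next I would compute the slice at the point $v$. Taking $v = (I_{\beta_1}\ 0)\in M_{\beta_1,\beta_2}$ (the identity block followed by zeros) as in Theorem \ref{thm:mainapp}, the tangent space $\mathfrak g\cdot v$ fills up $M_{\beta_1,\beta_2}$, the stabilizer $G_v$ is $L_v\ltimes U$ with $L_v\cong H := \GL(\beta_1)\times\GL(\beta_2-\beta_1)\times G'$, and the slice representation $(H,F)$ is read off by letting the embedded $\GL(\beta_1)\times\GL(\beta_2-\beta_1)\subset\GL(\beta_2)$ act on each $\Hom(\complex^{\beta_{tb}},\complex^{\beta_{hb}})$. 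Concretely this splits each arrow incident to vertex $2$ into two arrows — one retaining the old vertex (now of dimension $\beta_2-\beta_1$) and one new arrow to/from vertex $1$ (the dashed arrows in the picture) — which is exactly the quiver $(Q_a,\beta_a)$ drawn in the statement. I would then invoke Lemma \ref{lem:basic} to identify $b_{f_v} = b_{f,v}$, and check that the induced semi-invariant $f_v = f_a$ on $\Rep(Q_a,\beta_a)$ has weight that agrees with $\sigma$ at every vertex except vertex $1$; there the restriction computation $\sigma|_{L_v} = \det^{e-d}\otimes\det^e\otimes\sigma'$ from Theorem \ref{thm:mainapp} translates, under $\langle\alpha_a,\cdot\rangle = \sigma_a$, into $\sigma_a(1) = \sigma(2)-\sigma(1)+\dots$. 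A small care point: the sign/value at vertex $1$ needs to come out as $\sigma_a(1) = \sigma(1)+\sigma(2)$ as claimed, which I expect follows from writing the weight via the Euler form on the new quiver $Q_a$ and matching $e-d$ against the combinatorics of the split arrows — one must be attentive to whether $\vec a$ is a source or a sink since that swaps the roles of the $\Lambda_1$ and $\Lambda_1^*$ factors, but Theorem \ref{thm:mainapp} is stated to accommodate either.

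Finally, the $b$-function statement itself is then immediate: Theorem \ref{thm:mainapp}(1) gives $b_1(s) = b_{I^d}(s) = [s]^d_{n-m,n} = [s]^{|\sigma_1|}_{\beta_2-\beta_1,\beta_2}$ (using $d=\sigma_1$, $n=\beta_2$, $m=\beta_1$, and noting $d\geq 0$ so $d = |\sigma_1|$; if $\sigma_1=0$ this factor is trivially $1$, consistent with $\det^0$), and Theorem \ref{thm:mainapp}(2) gives $b_2(s) = b_{f_v}(s) = b_{f_a}(s)$. The multiplicity-one property and the multiplicity-freeness of $\sigma$ follow from the hypothesis that $\sigma_a$ is multiplicity-free on $\Rep(Q_a,\beta_a)$ via the injectivity in Lemma \ref{lem:basic}, exactly as in the proof of Theorem \ref{thm:mainapp}. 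I expect the main obstacle to be purely bookkeeping rather than conceptual: verifying that the slice representation genuinely is the quiver $(Q_a,\beta_a)$ with all arrows placed and oriented as drawn, and that the weight $\sigma_a$ at vertex $1$ is $\sigma(1)+\sigma(2)$ rather than some other combination — in particular handling the source-versus-sink dichotomy and the case $\beta_1 = \beta_2$ (where the Remark after Theorem \ref{thm:mainapp} is what licenses the argument) uniformly.
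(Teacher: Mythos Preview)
Your proposal is correct and matches the paper's approach exactly: the paper states only that Theorem~\ref{thm:bquiv} ``is an immediate consequence of Theorem~\ref{thm:mainapp}'', and you have supplied precisely the translation needed to see this---identifying $E=M_{\beta_1,\beta_2}$ with the arrow $\vec a$, $F$ with the remaining arrows, $G'=\prod_{x\neq 1,2}\GL(\beta_x)$, and recognizing that the 1-source/1-sink condition is exactly what makes $\GL(\beta_1)$ act trivially on $F$ as required by~(\ref{eq:matrix}). Your flagged care point about the weight at vertex~$1$ is real but purely conventional: the discrepancy between the $e-d$ you compute and the claimed $\sigma(1)+\sigma(2)$ dissolves once you track that the paper's $\Lambda_1^{(*)}$ convention for $M_{m,n}$ differs from the quiver's standard $\Hom$ action by a dualization on $\GL(n)$, which flips the sign of $e$ relative to $\sigma(2)$ (and similarly explains $d=|\sigma_1|$ uniformly across the source/sink cases).
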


\begin{remark}\label{rem:impo}
In examples, we prefer working with dimension vectors $\alpha$ rather than the weights $\sigma$. Since we know the weight $\sigma_a=\langle \alpha_a, \cdot \rangle$ on the slice, we implicitly also know the dimension vector $\alpha_a$.  Let $P_i$ be the indecomposable projective module (see \cite{elements}) of $Q_a$ at vertex $i$ and $S_i$ the simple module of $Q_a$ at vertex $i$. The formulas are:
\begin{itemize}
\item[(a)] If $a$ is a 1-source, then $\alpha_a=\alpha+(\alpha_2-\alpha_1)\Dim P_1-\alpha_1 \cdot \Dim S_2,$
\item[(b)] If $a$ is a 1-sink, then  $\alpha_a=\alpha+\alpha_1 \cdot \Dim P_1-\alpha_1 \cdot \Dim S_1.$
\end{itemize}
Moreover, in these cases we can see by direct computation that if $f=c^V$, then $f_a=c^{V'}$ is again a Schofield semi-invariant, where the representation $V'\in \Rep(Q_a,\alpha_a)$ can be written down explicitly. Since we will be working with generic Schur representations $V$, we will write only the corresponding dimension vectors (which are real Schur roots).

Writing $\sigma=\langle \alpha, \cdot \rangle = \langle \cdot, \alpha^* \rangle$, we can write down the dual formulas for the relation between $\alpha^*$ and $\alpha_a^*$ as well. They be deduced easily from the formulas above if we note that the dual semi-invariant $f^*$ on the opposite quiver $Q^*$ of $Q$ (i.e. reverse all arrows) has weight of $-\sigma = \langle \alpha^*, \cdot \rangle_*\,$, where $\langle \cdot, \cdot \rangle_*$ denotes the Euler product on $Q^*$.
\end{remark}

\vspace{0.1in}

\begin{definition} \label{def:sliceable}
For a semi-invariant $f$ of a quiver $Q$, we say $f$ is \itshape sliceable \normalfont if, after slicing repeatedly at $1$-sinks and $1$-arrows as described in Theorem \ref{thm:bquiv} (with possible simplifications, as in Lemma \ref{lem:simp}), we can reach the empty quiver (equivalently, a non-zero constant function).
\end{definition}
 %Due to Remark \ref{thm:import1}, if $(Q,f)$ is sliceable, then for the calculation of the $b$-function we can argue by induction and ignore the multiplicity-free requirement completely. 

 In the case $f$ is sliceable, we can compute the $b$-function and the locally semi-simple representation (see Proposition \ref{prop:locsemi}) of $f$ using the slice method. The following proposition gives a clearer picture of sliceable irreducible semi-invariants:

\begin{proposition}\label{prop:neg}
Let $f=c^V\in \op{SI}(Q,\beta)$ be an irreducible semi-invariant of weight $\langle \alpha, \cdot \rangle=-\langle \cdot, \alpha^* \rangle$ and assume $f$ depends on all arrows of $Q$. If $\alpha$ (resp. $\alpha^*$) is not a real Schur root, then $f$ is not sliceable.

Furthermore, take an arrow $\vec{a}$ that is a $1$-source or $1$-sink between $1$ and $2$ such that $\beta_1\leq \beta_2$, and assume $\alpha$ is a real Schur root. Let $\langle \alpha_a , \cdot \rangle$ be the weight of the induced semi-invariant $f_a$ on the slice $(Q_a,\beta_a)$, and let $\langle \alpha_a', \cdot \rangle$ be the weight on $(Q_a',\beta_a')$ after possible simplifications as in Lemma \ref{lem:simp}. Then the following are equivalent:
\begin{itemize}
\item[(a)] $\alpha_a$ is a real Schur root;
\item[(b)] $\alpha_a'$ is a real Schur root;
\item[(c)] $\vec{a}$ is a $1$-source with $\alpha_1=\alpha_2$ or $\alpha^*_1=0$, or $\vec{a}$ is a $1$-sink with $\alpha_1=0$ or $\alpha_1^*=\alpha_2^*$.
\end{itemize}
\end{proposition}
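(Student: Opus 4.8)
The plan is to run everything through three inputs: the explicit formulas of Remark \ref{rem:impo} for the dimension vector $\alpha_a$ produced by one slice; the fact that the quiver slice is a special case of the construction of Theorem \ref{thm:mainapp}, so by Proposition \ref{prop:algiso} it is an isomorphism of semi-invariant rings onto a ring of $U$-invariants (in particular invertible, with $c^V\mapsto c^{V_a}$ for an explicit $V_a$); and the translation ``$\alpha$ is a real Schur root $\Leftrightarrow$ $\langle\alpha,\cdot\rangle$ is a multiplicity-free weight'', which holds because a real Schur root makes $\Rep(Q,\alpha)$ (and every multiple of it) prehomogeneous, so by Theorem \ref{thm:recip} and Remark \ref{rem:fulton} the weight $\langle\alpha,\cdot\rangle$ is multiplicity-free on $\Rep(Q,\beta)$ whenever $c^V\neq0$.

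For the first assertion I induct on the length $N$ of a slicing chain from $(Q,\beta)$ down to the empty quiver. If $N=1$ the quiver is a single arrow between vertices of equal dimension, so $f=c^V$ is a power of a $\det$ and $\alpha=\Dim S_1$ is a real Schur root: this is the base case. For $N>1$, after the first slice at a $1$-source or $1$-sink arrow $\vec a$ and the ensuing simplifications of Lemma \ref{lem:simp} (which in particular delete every arrow that $f$ has stopped depending on), we obtain $c^{V_a'}$ on a smaller sliceable quiver on whose arrows it genuinely depends, so by induction $\Dim V_a'$ is a real Schur root; applying the equivalence (a)$\Leftrightarrow$(b) below, $\alpha_a=\Dim V_a$ is a real Schur root as well. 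It then remains to push this back one slice: since $f=\phi_v^{-1}(c^{V_a})$ is the unique semi-invariant of the multiplicity-free weight $\langle\alpha,\cdot\rangle$, running the formulas of Remark \ref{rem:impo} in reverse exhibits the generic representation of dimension $\alpha$ as an inverse-reflection-functor image of the generic exceptional $V_a$, hence itself exceptional, so $\alpha$ is a real Schur root. The claim for $\alpha^*$ follows by applying all of this to the dual semi-invariant $f^*$ on the opposite quiver $Q^*$: its weight has dimension vector $\alpha^*$ (Remark \ref{rem:impo}), slicing $Q$ at $\vec a$ corresponds to slicing $Q^*$ at $\vec a$ with reversed orientation, and $f$ is sliceable if and only if $f^*$ is.

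For the second assertion, (a)$\Leftrightarrow$(b) says the simplification of Lemma \ref{lem:simp} preserves the property of being a real Schur root: it does not change the semi-invariant, only the quiver it is read on, it is triggered by the vanishing of $\alpha$ (or $\alpha^*$) at a vertex, and is built from reflection functors, which are equivalences carrying generic representations to generic representations; equivalently, by Theorem \ref{thm:recip} and Remark \ref{rem:fulton} both sides amount to the relevant $\dim\op{SI}$ being $1$, which Lemma \ref{lem:simp} matches up. For (a)$\Leftrightarrow$(c), assume $\alpha$ is a real Schur root, so the generic $V\in\Rep(Q,\alpha)$ is exceptional and $\langle\alpha,\alpha\rangle=1$. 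Substitute $\alpha_a=\alpha+(\alpha_2-\alpha_1)\,\Dim P_1-\alpha_1\,\Dim S_2$ ($1$-source) and $\alpha_a=\alpha+\alpha_1\,\Dim P_1-\alpha_1\,\Dim S_1$ ($1$-sink), with $P_i,S_i$ on $Q_a$, into $\langle\alpha_a,\alpha_a\rangle$ and expand using $\langle\alpha,\alpha\rangle=1$ together with the standard identities for $\langle S_1,-\rangle$, $\langle-,S_1\rangle$, $\langle P_1,-\rangle$ on $Q_a$; the outcome is that $\langle\alpha_a,\alpha_a\rangle=1$ precisely when (c) holds, and when (c) fails one reads off that $\alpha_a$ is not even a positive real root (it acquires a negative coordinate, or $\langle\alpha_a,\alpha_a\rangle>1$), hence certainly not a real Schur root. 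In the cases where (c) holds, the explicit $V_a$ furnished by Remark \ref{rem:impo} is a reflection-functor image of $V$, hence exceptional, giving (c)$\Rightarrow$(a).

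The main obstacle is the converse half of (a)$\Leftrightarrow$(c): the equality $\langle\alpha_a,\alpha_a\rangle=1$ is only necessary, and to conclude that $\alpha_a$ is a genuine \emph{Schur} root one must pin down the representation $V_a$ itself — not merely its dimension vector — and verify that its endomorphism ring is $\complex$ and its canonical decomposition is trivial. Doing this uniformly across both orientations, the sub-cases within (c) (such as $\alpha_1=\alpha_2$ versus $\alpha_1^*=0$, and their $1$-sink analogues), and the degenerate configurations where the far endpoint of $\vec a$ has no further neighbours — together with fixing the Coxeter-transformation conventions so that the reverse formulas of Remark \ref{rem:impo} are correct — is where the real bookkeeping lies; and the same control over $V_a$ is exactly what makes the push-back step in the first assertion rigorous.
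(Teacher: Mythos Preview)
Your approach tries to verify ``real Schur root'' by explicitly tracking the representation $V_a$ through reflection functors and checking exceptionality case by case; you yourself flag this as the main obstacle. The paper bypasses it entirely with one observation you are missing: since $f$ is irreducible and depends on all arrows, $\beta$ is sincere, and by the algebra isomorphism of Proposition~\ref{prop:algiso} the induced $f_a$ is again irreducible with $\beta_a$ sincere. By \cite[Lemma~1]{harwey}, an irreducible $c^{V'}$ on a sincere dimension vector forces $V'$ to be a Schur representation. Thus both $V$ and $V_a$ are \emph{automatically} Schur, and ``$\alpha$ (resp.\ $\alpha_a$) is a real Schur root'' reduces to the purely numerical condition $\langle\alpha,\alpha\rangle=1$ (resp.\ $\langle\alpha_a,\alpha_a\rangle_a=1$). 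A direct computation (for a $1$-source) gives
\[
\langle\alpha_a,\alpha_a\rangle_a \;=\; \langle\alpha,\alpha\rangle \;-\; (\alpha_2-\alpha_1)\,\alpha_1^*,
\]
so the value is preserved exactly when (c) holds, and it is also unchanged by the simplifications of Lemma~\ref{lem:simp}. This single formula yields (a)$\Leftrightarrow$(b)$\Leftrightarrow$(c) at once, with no need to analyze $V_a$ as a module.

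The same formula closes the gap in your proof of the first assertion. Your induction needs the push-back step ``$\alpha_a$ real Schur root $\Rightarrow$ $\alpha$ real Schur root'', and you invoke an inverse reflection functor; but an arbitrary slice in a slicing chain need not satisfy (c), so no such functor is available in general, and your reverse of Remark~\ref{rem:impo} does not literally produce an exceptional module. The paper argues monotonically instead: since the relevant $V$ is Schur at every stage, one has $\langle\alpha,\alpha\rangle\leq 1$ throughout; the displayed formula shows this value is non-increasing under slicing and unchanged under simplification; at the terminal (empty) stage it equals $1$; hence it equals $1$ at every stage, and every dimension vector encountered is a real Schur root.
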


\begin{proof}
We will assume $a$ is a $1$-source (the case with $1$-sink is similar). Since $f$ depends on all arrows of $Q$ and is irreducible, we have by Theorem \ref{thm:bquiv} part a)  that $\beta$ and $\beta_a$ are sincere dimension vectors. Due to the isomorphism $\op{SI}(Q,\beta)\cong\complex[\Rep(Q_a,\beta_a)]^{U\rtimes\op{SL}(\beta_a)}$, we also have that $f_a=c^{V'}$ is irreducible. Since $\beta$ and $\beta_a$ are sincere, $V$ and $V'$ are Schur representations by \cite[Lemma 1]{harwey}. 

Note that $\langle \alpha , \alpha \rangle = -\langle \alpha, \alpha^*\rangle = \langle \alpha^*,\alpha^* \rangle$. By a direct computation, one obtains the formula
$$\langle \alpha_a, \alpha_a \rangle_a = \langle \alpha , \alpha \rangle -(\alpha_2-\alpha_1)\alpha_1^*,$$
where $\langle \cdot, \cdot \rangle_a$ is the Euler form on $Q_a$. This implies that this value decreases by slicing (at least before simplifications), and it remains the same iff $\alpha_2=\alpha_1$ or $\alpha_1^*=0$. However, we can simplify according to Lemma \ref{lem:simp} precisely under these conditions, and we get a reduced quiver $Q_a'$ with $\alpha_a'$. But an easy computation yields that the value $\langle \alpha_a' , \alpha_a' \rangle_a'=\langle \alpha_a, \alpha_a \rangle_a$ still remains the same. Since $V$ (resp. $V'$) are Schur representations, $\alpha$ (resp. $\alpha_a$) is a real Schur root if and only if $\langle \alpha, \alpha \rangle=1$ (resp. $\langle \alpha_a , \alpha_a \rangle = 1$). Now assume $f$ is sliceable. Since $V$ is a Schur representation, we have $\langle \alpha , \alpha \rangle \leq 1$. Since this value can only decrease by slicing and the last value (when the function is constant) is trivially $1$, we must have that all values are $1$, and the encountered dimension vectors are all real Schur roots.
\end{proof}

Finally, we summarize the rules of slicing in the most common situation described in part (c) of the above theorem, combining Lemma \ref{lem:simp}, Theorem \ref{thm:bquiv} and Remark \ref{rem:impo}.

\begin{corollary}\label{cor:rulz}
Take $Q$ and $f$ a semi-invariant of weight $\sigma=\langle \alpha,\cdot\rangle$ as in Theorem \ref{thm:bquiv}. Slicing at the arrow $\vec{a}$ in the following cases, we obtain the slice $(Q_a,\beta_a)$ and induced semi-invariant $f_a$ with weight $\sigma_a=\langle \alpha_a, \cdot \rangle$:
\begin{itemize}
\item[(a)] If $\vec{a}$ is a 1-source with $\alpha_1=\alpha_2$, then 
\[(Q_a,\stackrel{\alpha_a}{\beta_a}): \hspace{0.2in} \vcenter{\vbox{\xymatrix@R-2pc@C+0.5pc{
 & & \stackrel{\alpha_{x_1}}{\beta_{x_1}} \dots \\
 & & \stackrel{\alpha_{x_2}}{\beta_{x_2}}  \dots \\
   & \stackrel{\alpha_2}{\beta_1} \ar[uur] \ar[ur] &\dots \\
\stackrel{0}{\beta_2-\beta_1} & & \stackrel{\alpha_{y_2}}{\beta_{y_2}} \ar[ul] \ar@{-->}[ll] \dots\\
\stackrel{0}{\beta_2-\beta_1} & & \stackrel{\alpha_{y_1}}{\beta_{y_1}} \ar[uul] \ar@{-->}[ll] \dots
}}}\]
\item[(b)] If $\vec{a}$ is a 1-sink with $\alpha_1=0$, then
\[(Q_a,\stackrel{\alpha_a}{\beta_a}): \hspace{0.2in} \vcenter{\vbox{\xymatrix@R-2pc@C+0.5pc{
 & & \stackrel{\alpha_{x_1}}{\beta_{x_1}} \dots \\
 & & \stackrel{\alpha_{x_2}}{\beta_{x_2}}  \dots \\
   & \stackrel{\alpha_2}{\beta_{2}-\beta_1} \ar[uur] \ar[ur] &\dots \\
\stackrel{0}{\beta_1} & & \stackrel{\alpha_{y_2}}{\beta_{y_2}} \ar[ul] \ar@{-->}[ll] \dots\\
\stackrel{0}{\beta_1} & & \stackrel{\alpha_{y_1}}{\beta_{y_1}} \ar[uul] \ar@{-->}[ll] \dots
}}}\]
\end{itemize}
Moreover, writing $\sigma= -\langle \cdot, \alpha^* \rangle$, we have rules dual to the above by replacing $\alpha$ with $\alpha^*$, with all arrows reversed. Furthermore, in all these four cases $\alpha$ is a real Schur root if and only if $\alpha_a$ is a real Schur root, in which case 
\[b_f(s) = b_{f_a}(s) \cdot [s]_{\beta_2-\beta_1,\beta_2}^{\alpha_2}.\] 
\end{corollary}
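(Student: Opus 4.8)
The plan is to derive Corollary \ref{cor:rulz} as a direct specialization of Theorem \ref{thm:bquiv}, filling in the two ingredients that the corollary adds beyond the theorem: (i) the explicit shape of the sliced quiver $(Q_a,\beta_a)$ after the simplification of Lemma \ref{lem:simp} has been applied, and (ii) the equivalence ``$\alpha$ is a real Schur root $\iff$ $\alpha_a$ is a real Schur root'' in these four special cases. Theorem \ref{thm:bquiv} already gives the factorization $b_f(s)=b_{f_a}(s)\cdot[s]^{|\sigma_1|}_{\beta_2-\beta_1,\beta_2}$ and identifies how $\sigma_a$ differs from $\sigma$ at vertex $1$; so the only thing to check for the displayed formula is the bookkeeping $|\sigma_1|=\alpha_2$, which follows because slicing at a $1$-source/$1$-sink with the stated hypothesis forces the relevant determinantal block to have exponent $\alpha_2$ (equivalently, reading off $\sigma(2)=\langle\alpha,\cdot\rangle$ evaluated appropriately at vertex $2$; I would just quote Remark \ref{rem:impo} and the weight conventions of Section \ref{subsec:backquiv}).

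First I would treat case (a), the $1$-source with $\alpha_1=\alpha_2$. Here $\vec a$ runs $1\to 2$ (or $2\to 1$; by Remark \ref{rem:impo} the dual rules cover the reversed orientation, so WLOG fix one). By Theorem \ref{thm:bquiv} the raw slice $(Q_a,\beta_a)$ has vertex $1$ with dimension $\beta_1$ unchanged, vertex $2$ with dimension $\beta_2-\beta_1$, new dashed arrows from $1$ to every neighbor $x_i,y_j$ of $2$, and the weight $\alpha_a$ agrees with $\alpha$ except at vertex $1$ where it becomes $\alpha_1+\alpha_2=2\alpha_2$ (using $\sigma_a(1)=\sigma(1)+\sigma(2)$ and the correspondence between $\sigma$ and $\alpha$ via the Coxeter/Euler conventions). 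Now I invoke Lemma \ref{lem:simp}: writing $\sigma_a=-\langle\cdot,\alpha_a^*\rangle$, the hypothesis $\alpha_1=\alpha_2$ is exactly the condition (as noted right after Lemma \ref{lem:simp}, via $\alpha^*=C\alpha$ and the remark that $\alpha_1^*=0$ when vertex $1$ is a sink) that makes vertex $1$ removable/simplifiable. Applying the simplification rule (a) of Lemma \ref{lem:simp} pushes vertex $1$ out and introduces the copies of a $\beta_2-\beta_1$-vertex with zero weight and the dashed arrows into the $y_j$, yielding precisely the picture drawn in Corollary \ref{cor:rulz}(a). Case (b), the $1$-sink with $\alpha_1=0$, is handled identically: now the condition $\alpha_1=0$ is directly the hypothesis of Lemma \ref{lem:simp}(a) (no dualization needed), the raw slice has vertex $2$ of dimension $\beta_2-\beta_1$ and vertex $1$ still of dimension $\beta_1$, and simplification produces the stated diagram. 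The dual rules (replacing $\alpha$ by $\alpha^*$, reversing arrows) follow verbatim from the last paragraph of Remark \ref{rem:impo}.

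For the real-Schur-root equivalence I would reuse the computation already performed in the proof of Proposition \ref{prop:neg}: there it is shown that
\[
\langle \alpha_a,\alpha_a\rangle_a=\langle\alpha,\alpha\rangle-(\alpha_2-\alpha_1)\alpha_1^*,
\]
and that this quantity is unchanged by the simplification of Lemma \ref{lem:simp}, so $\langle\alpha_a',\alpha_a'\rangle_{a}'=\langle\alpha_a,\alpha_a\rangle_a$. In each of the four cases of the corollary the hypothesis makes the correction term vanish: for a $1$-source with $\alpha_1=\alpha_2$ the factor $\alpha_2-\alpha_1$ is zero; for a $1$-sink with $\alpha_1=0$ the analogous computation (with roles of $\alpha,\alpha^*$ swapped, as in the $1$-sink branch of Proposition \ref{prop:neg}) gives correction term a multiple of $\alpha_1=0$; and the two dual cases are the $\alpha\leftrightarrow\alpha^*$ mirror images. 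Hence $\langle\alpha_a,\alpha_a\rangle_a=\langle\alpha,\alpha\rangle$, and since in the sliceable/irreducible setting the generic representations are Schur (so $\alpha$ is a real Schur root iff $\langle\alpha,\alpha\rangle=1$, likewise for $\alpha_a$ on $Q_a$), the two conditions are equivalent. Combining with Theorem \ref{thm:bquiv} and the identification $|\sigma_1|=\alpha_2$ gives $b_f(s)=b_{f_a}(s)\cdot[s]^{\alpha_2}_{\beta_2-\beta_1,\beta_2}$, completing the proof.

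The main obstacle I anticipate is purely notational rather than conceptual: carefully matching the exponent $|\sigma_1|$ from Theorem \ref{thm:bquiv} with $\alpha_2$ under the sign/duality conventions relating characters $\sigma$ to dimension vectors $\alpha$ (and the switch between $\alpha$ and $\alpha^*$ for sinks versus sources), and verifying that the simplification rules of Lemma \ref{lem:simp} apply in exactly the four listed cases and produce the diagrams as drawn — i.e., that the dashed arrows land on the correct neighbors and that no spurious arrows at vertex $1$ survive. None of this requires new ideas; it is a matter of transcribing the already-established formulas of Theorem \ref{thm:bquiv}, Lemma \ref{lem:simp}, Remark \ref{rem:impo}, and the Euler-form computation from the proof of Proposition \ref{prop:neg} into the normalized ``case (c)'' situation.
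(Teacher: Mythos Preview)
Your proposal is correct and follows essentially the same route as the paper. The paper does not give a separate proof of Corollary~\ref{cor:rulz} at all; it simply introduces the statement as a summary ``combining Lemma~\ref{lem:simp}, Theorem~\ref{thm:bquiv} and Remark~\ref{rem:impo}'' in the situation of Proposition~\ref{prop:neg}(c), which is exactly the decomposition you carry out, including reusing the Euler-form identity $\langle\alpha_a,\alpha_a\rangle_a=\langle\alpha,\alpha\rangle-(\alpha_2-\alpha_1)\alpha_1^*$ from the proof of Proposition~\ref{prop:neg} for the real Schur root equivalence. One small bookkeeping slip: in case~(a) it is not $(\alpha_a)_1$ that changes to $\alpha_1+\alpha_2$ but rather, via Remark~\ref{rem:impo}(a) with $\alpha_1=\alpha_2$, one gets $(\alpha_a)_2=0$, and it is Lemma~\ref{lem:simp}(a) applied at vertex~$2$ (not vertex~$1$) that produces the displayed diagram; this does not affect the argument.
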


\begin{remark} For a semi-invariant $f$ to be non-zero, some inequalities must be satisfied between the dimensions $\beta_x$, where $x\in Q_0$. The isomorphism $\op{SI}(Q,\beta)\cong\op{SI}(Q_a,\beta_a)^{U}$ from Proposition \ref{prop:algiso} gives inductively these inequalities, and they will be encoded in the negativity of the roots of the $b$-function. For simplicity, we will work with dimension vectors $\beta$ so that these inequalities are strict.
\end{remark}

\subsection{Some computations of $b$-functions for quivers}\label{subsec:exquiv}

We now show how to use Theorem \ref{thm:bquiv} and Corollary \ref{cor:rulz} in examples. We place the values of $\alpha$ or $\alpha^*$ on top of the values of the dimension vector $\beta$, where $\langle \alpha , \cdot \rangle = - \langle \cdot, \alpha^*\rangle$ is the weight of the semi-invariant. When  $\alpha^*$ is used, we label its values by $*$ at each vertex. We use a dashed line for the arrow at which we are slicing. We indicate (below the curly arrow) the slicing rule used from Corollary \ref{cor:rulz} (or Remark \ref{rem:impo} or Lemma \ref{lem:simp}) and retain (above the curly arrow) the decomposition of the $b$-function as given by Corollary \ref{cor:rulz} (or Theorem \ref{thm:bfun}).

\begin{example}\label{thm:beefor} We compute the $b$-function of the semi-invariant from Example \ref{eq:deefor}. Recall $\beta_1+\beta_2+\beta_3=2\beta_4$.
\[\vcenter{\vbox{\xymatrix{
 & \stackrel{1}{\beta_2} \ar[d] & \\
\stackrel{1}{\beta_1} \ar@{-->}[r] & \stackrel{1}{\beta_4} & \ar[l] \stackrel{1}{\beta_3}}
}}\stackrel[\ref{cor:rulz} (a)]{[s]_{\beta_4-\beta_1,\beta_4}}{\xymatrix{ \ar@{~>}[r] & }}
\vcenter{\vbox{\xymatrix{
 \stackrel{1}{\beta_2} \ar[d] \ar@{-->}[r] & \stackrel{0}{\beta_4-\beta_1} & \\
 \stackrel{1}{\beta_1} & \ar[l] \stackrel{1}{\beta_3} \ar[r] & \stackrel{0}{\beta_4-\beta_1}}
}}\stackrel[\ref{cor:rulz} (b)]{[s]_{\beta_1+\beta_2-\beta_4,\beta_2}}{\xymatrix@C+1pc{ \ar@{~>}[r] & }}\]
\[
\vcenter{\vbox{\xymatrix{
 \stackrel{1}{\beta_4-\beta_3} \ar@{-->}[d]  &  & \\
 \stackrel{1}{\beta_1} & \ar[l] \stackrel{1}{\beta_3} \ar[r] & \stackrel{0}{\beta_4-\beta_1}}
}}\stackrel[\ref{cor:rulz} (a)]{[s]_{\beta_1+\beta_3-\beta_4,\beta_1}}{\xymatrix@C+1pc{ \ar@{~>}[r] & }}
\vcenter{\vbox{\xymatrix@C-0.5pc{ & \stackrel{1}{\beta_4-\beta_3} & \\
  \stackrel{0}{\beta_4-\beta_2} & \ar[l]\ar[u] \stackrel{1}{\beta_3} \ar[r] & \stackrel{0}{\beta_4-\beta_1}}
}}\stackrel[\ref{lem:simp} (b)]{}{\xymatrix{ \ar@{~>}[r] & }}\]
\vspace{0.1in}
\[\vcenter{\vbox{\xymatrix{
  \stackrel{0}{\beta_4-\beta_2} & \ar@{-->}[l] \stackrel{1}{\beta_3} \ar[r] & \stackrel{0}{\beta_4-\beta_1}}}}
\stackrel[\ref{cor:rulz} (b)]{[s]_{\beta_2+\beta_3-\beta_4,\beta_3}}{\xymatrix@C+1pc{ \ar@{~>}[r] & }}
\vcenter{\vbox{\xymatrix{
\ar@{-->}[r] \stackrel{1}{\beta_4-\beta_1} & \stackrel{0}{\beta_4-\beta_1}}
}}\stackrel{[s]_{\beta_4-\beta_1}}{\xymatrix{ \ar@{~>}[r] & }} \emptyset
\]
Hence the $b$-function is 
$$b(s)=[s]_{\beta_4-\beta_1,\beta_4}\cdot[s]_{\beta_1+\beta_2-\beta_4,\beta_2}\cdot[s]_{\beta_1+\beta_3-\beta_4,\beta_1}\cdot[s]_{\beta_2+\beta_3-\beta_4,\beta_3}\cdot[s]_{\beta_4-\beta_1}=$$
$$=[s]_{\beta_4}\cdot[s]_{\beta_1+\beta_2-\beta_4,\beta_2}\cdot[s]_{\beta_2+\beta_3-\beta_4,\beta_3}\cdot[s]_{\beta_1+\beta_3-\beta_4,\beta_1}.$$

Using Proposition \ref{prop:locsemi} at each step, we get that the locally semi-simple representation is
$$A=V_1^{\beta_4-\beta_1}\oplus V_2^{\beta_4-\beta_2} \oplus V_3^{\beta_4-\beta_3},$$
where the indecomposables are $V_1= \begin{matrix} & 1 & \\ 0 & 1 & 1\end{matrix} $, $V_2 = \begin{matrix} & 0 & \\ 1 & 1 & 1\end{matrix}$ , $V_3 =\begin{matrix} & 1 & \\ 1 & 1 & 0\end{matrix}$. Note that this is also the generic representation in $\Rep(Q,\beta)$. This is due to the fact that $\Rep(Q,\beta)\backslash \orb_A$ is the hypersurface defined by the semi-invariant.
\end{example}

\vspace{0.1in}

Now we formulate a result for tree quivers, that is, for quivers whose underlying graphs have no cycles. This includes the $b$-functions of semi-invariants for type $\A$ quivers determined in \cite{sugi}.

\begin{theorem} \label{thm:tree} 
Let $Q$ be a tree quiver, and $f$ a non-zero semi-invariant on $Q$ of weight $\langle \alpha,\cdot \rangle=-\langle \cdot, \alpha^* \rangle$. If $\alpha_x\leq 1$ for any $x\in Q_0$ (resp. $\alpha_x^*\leq 1$ for any $x\in Q_0$), then $f$ is sliceable, and the roots of $b_f(s)$ are negative integers.
\end{theorem}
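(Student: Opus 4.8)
Passing to the opposite quiver $Q^*$ and the dual semi-invariant $f^*$, which has weight $\langle\alpha^*,\cdot\rangle_*$ by Remark~\ref{rem:impo}, satisfies $b_{f^*}(s)=b_f(s)$, and is sliceable if and only if $f$ is, interchanges the two hypotheses; so I would treat the case $\alpha_x\le 1$ for all $x\in Q_0$. Throughout, for thin $\alpha$ on a tree the weight $\langle\alpha,\cdot\rangle$ is multiplicity-free (the generic representation of dimension $\alpha$ is the direct sum of the tree modules attached to the connected components of the support $S=\{x:\alpha_x=1\}$, so $\Rep(Q,\alpha)$ is prehomogeneous; cf. also Remark~\ref{rem:fulton}), hence $\deg f=\deg b_f$ by Theorem~\ref{thm:bfun}. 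I would argue by induction on the pair $(\deg f,|Q_0|)$ ordered lexicographically; if $\deg f=0$ then $f$ is a non-zero constant and there is nothing to prove.

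For the inductive step, delete all vertices with $\beta_x=0$ (on which $f$ does not depend) and treat the components of what remains — each a tree — separately, so that $\beta$ is sincere; pick a leaf $\ell$, joined to a vertex $m$ by its unique arrow $a$. If $\alpha_\ell=0$ and $\ell$ is the tail of $a$, then $f$ is independent of $\ell$ and $a$; deleting them gives a smaller tree with the same $f$ and $b_f$ and with $|Q_0|$ decreased. Otherwise I would slice at $a$ via Theorem~\ref{thm:bquiv}. Here the decisive observation is a dichotomy at a leaf with $\alpha_\ell=1$: since $\ell\in S$ and $m$ is its only neighbour, either $\alpha_m=1$ (the component of $S$ through $\ell$ has more than one vertex), or that component is $\{\ell\}$. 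In the first case $\alpha_\ell=\alpha_m=1$ and, after noting that $f\ne 0$ forces $\beta_\ell\le\beta_m$ so that $\ell$ is ``vertex $1$'' (cf. Example~\ref{eq:deefor}), Corollary~\ref{cor:rulz}(a) or (b) applies according to the orientation of $a$ and yields $b_f(s)=b_{f_a}(s)\cdot[s]_{\beta_m-\beta_\ell,\beta_m}$, with $f_a$ a semi-invariant of thin weight on the slice $(Q_a,\beta_a)$ and $\deg f_a=\deg f-\beta_\ell<\deg f$. In the second case $\beta_\ell=\beta_m$ and $f=\det(X)\cdot f'$, with $X$ the generic square matrix of $a$ and $f'$ a semi-invariant of thin weight on the tree $Q\setminus\ell$; then $b_f(s)=(s+1)(s+2)\cdots(s+\beta_\ell)\cdot b_{f'}(s)$ by the Capelli identity recalled in the Introduction together with the disjointness of the variables of $\det X$ and of $f'$. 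The remaining leaf configurations — $\ell$ the head of $a$ with $\alpha_\ell=0$, or $\alpha_\ell=1$ with $\ell$ a sink — are covered by Corollary~\ref{cor:rulz} directly or force $f=0$.

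The main obstacle is the verification, in the first case above, that after slicing at $a$ and performing the simplifications of Lemma~\ref{lem:simp} at the vertices of $\alpha$-value $0$ created by the slice, the pair $(Q_a,\beta_a)$ is again a tree with thin weight — only then does the inductive hypothesis apply to $f_a$. This is exactly where the tree hypothesis is used: in $Q_a$ the leaf $\ell$ inherits the neighbourhood of $m$, the vertex $m$ acquires $\alpha$-value $0$, and the branches of $Q$ that met at the cut-vertex $m$ are reattached through $\ell$ (together with some new leaves of $\alpha$-value $0$), and no cycle is produced precisely because $m$ separated a tree; for a general quiver this step creates cycles. One checks from the explicit formulas of Remark~\ref{rem:impo} that the only new $\alpha$-values are $0$'s, so thinness is preserved. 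Granting this, the induction closes: each slice splits off a factor $[s]_{c,d}=\prod_{i=c+1}^{d}(s+i)$ with roots $-(c+1),\dots,-d$ all negative integers (and the factor $(s+1)\cdots(s+\beta_\ell)$ in the special case has the same shape), so combining with the sliceability and the integrality of the roots of $b_{f_a}$ (resp. $b_{f'}$) given by induction shows that $f$ is sliceable and $b_f(s)$ has only negative integer roots. The routine remaining points — matching each leaf orientation to the correct clause of Corollary~\ref{cor:rulz} or its dual, the dimension bookkeeping, and the identity $b_{\det X}(s)=(s+1)\cdots(s+\beta_\ell)$ — would be carried out as in Example~\ref{thm:beefor}.
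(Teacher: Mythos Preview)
Your approach coincides with the paper's: reduce to thin $\alpha$ by duality, pick a leaf of the tree, and iterate Corollary~\ref{cor:rulz}/Lemma~\ref{lem:simp}; your lexicographic induction on $(\deg f,|Q_0|)$ plays the role of the paper's induction on $\dim\Rep(Q,\beta)$. The extra observations you spell out (that $f\neq 0$ forces $\beta_\ell\le\beta_m$ at the relevant leaves, and that the slice of a tree is again a tree with thin $\alpha_a$) are correct and merely make explicit what the paper leaves tacit.

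There is, however, one case you mishandle. When $\ell$ is a \emph{sink} leaf with $\alpha_\ell=\alpha_m=1$, Corollary~\ref{cor:rulz}(b) does \emph{not} apply: its hypothesis is $\alpha_1=0$, whereas here $\alpha_1=\alpha_\ell=1$; and the dual clause would need $\alpha_\ell^*=\alpha_m^*$, which you do not know. Your fallback ``or force $f=0$'' also fails. What actually happens is that $f$ does not depend on the arrow $a:m\to\ell$ at all: since $V(a):\mathbb C\to\mathbb C$ is an isomorphism and $\ell$ is incident only to $a$, the $\phi(\ell)$-columns of $d^V_W$ meet only the $a$-rows and there form $I_{\beta_\ell}$; expanding $\det d^V_W$ along those columns eliminates $W(a)$. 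The paper covers this at the outset by declaring that one works with the \emph{support} of $f$, i.e.\ drops every arrow on which $f$ does not depend, so that for a $1$-sink one may always assume $\alpha_\ell=0$ and $\alpha_m=1$ and then invoke Corollary~\ref{cor:rulz}(b). If you insert this preliminary ``pass to the support of $f$'' step, your argument closes.
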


\begin{proof}
By duality, it is enough to consider the case $\alpha_x\leq 1$ for all $x\in Q_0$. It is immediate that $\alpha$ is a prehomogeneous dimension vector, hence the weight $\langle \alpha, \cdot \rangle$ is multiplicity-free. As usual, we work with the support of $f$, that is, we can drop arrows if $f$ doesn't depend on its corresponding variables. Since $Q$ is a tree, we can take an arrow $\vec{a}\in Q_1$ that is a 1-source or 1-sink. We use the notation as in Theorem \ref{thm:bquiv}.

First, assume $\vec{a}$ is 1-source. If $f$ depends on $\vec{a}$, we must have $\alpha_1=1$ by Lemma \ref{lem:simp}. Let $A$ be the generic matrix of variables corresponding to $\vec{a}$. If $\alpha_2=0$, then by Lemma \ref{lem:simp} part a) we can disconnect the quiver, $A$ has to be a square matrix, and we can separate variables $f=f'\cdot \det A$, where $f'$ is a semi-invariant on the smaller quiver without the arrow $\vec{a}$. Hence we can assume $\alpha_2=1$.  

Similarly, if $\vec{a}$ is a 1-sink, we can assume WLOG that $\alpha_1=0$ and $\alpha_2=1$.

In any case, we are in the situation of slicing at $\vec{a}$ as in Corollary \ref{cor:rulz}, and get a quiver $Q_a$ which is still a tree quiver, and the weight $\alpha_a$ of the induced semi-invariant $f_a$ on $Q_a$ still satisfies $(\alpha_a)_x\leq 1$, for any $x\in (Q_a)_0$. By Theorem \ref{thm:bquiv}, we get
$$b_f(s)=b_{f_a}(s)\cdot [s]_{\beta_2-\beta_1,\beta_2}.$$
Since the dimension of the representation space strictly decreases by slicing, this procedure is finite and stops when we arrive at a constant function.
\end{proof}

For some geometric implications of the result above about singularities of the zero sets of such semi-invariants, see \cite[Theorem 3.13]{en2}. We consider the next family of Dynkin quivers:
\begin{theorem}\label{thm:dee} 
All fundamental semi-invariants of quivers of type $\D_n$ are sliceable.
\end{theorem}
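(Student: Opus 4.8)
The plan is to proceed by induction on the number of arrows of the $\D_n$ quiver, using Theorem \ref{thm:bquiv} and Corollary \ref{cor:rulz} to peel off arrows one at a time. Since the underlying graph of a $\D_n$ quiver is a tree, Theorem \ref{thm:tree} already handles any fundamental semi-invariant $f = c^V$ whose weight $\langle\alpha,\cdot\rangle$ has $\alpha_x\le 1$ for all vertices $x$ (or dually $\alpha^*_x\le 1$). So the real content is to show that for $\D_n$ the fundamental semi-invariants — which, by Theorem \ref{thm:satokimura} and the description in \cite{scho}, correspond to the positive Schur roots $\alpha$ arising as dimension vectors of the indecomposable summands of a generic (or perpendicular-simple) representation — always have $\alpha$ (or $\alpha^*$) with entries bounded by $1$, or else can be reduced to such a case after one or two slices.

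First I would recall the explicit list of positive roots of $\D_n$: besides the ``type $\A$'' roots supported on a subpath (which have all coordinates $0$ or $1$), the only roots with a coordinate equal to $2$ are those whose support contains the central branch vertex and whose value there is $2$, with the three ``legs'' carrying $1$'s tapering appropriately. For a fundamental semi-invariant $f=c^V$ the relevant dimension vector $\alpha$ is a real Schur root with $\langle\alpha,\alpha\rangle=1$; I would enumerate which of these occur as $\alpha$ (equivalently, check the dual $\alpha^* = C\cdot\alpha$ via the Coxeter transformation, using Remark \ref{rem:impo}). The key observation to establish is: whenever $\alpha$ has a coordinate equal to $2$, its Coxeter dual $\alpha^*$ has all coordinates $\le 1$ (or vice versa), so Theorem \ref{thm:tree} applies directly in the dual picture. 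This is the step I expect to be the main obstacle — it is essentially a finite but fiddly case analysis over the root system of $\D_n$, and one must be careful with the orientation of the quiver, since $C$ depends on it; the cleanest route is probably to note that a real Schur root $\alpha$ with a $2$-coordinate is the dimension vector of an indecomposable with a one-dimensional endomorphism ring, hence must be ``thin away from the center,'' and then compute $\langle\alpha,\cdot\rangle$ against the simple roots to read off $\alpha^*$ directly.

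Once that dichotomy is in hand, the argument finishes quickly: if $\alpha_x\le 1$ for all $x$ (resp. $\alpha^*_x\le 1$ for all $x$) then Theorem \ref{thm:tree} gives sliceability outright. In the remaining boundary cases — where neither $\alpha$ nor $\alpha^*$ is thin, if any such occur — I would slice once at a $1$-source or $1$-sink arrow on one of the outer legs (such an arrow always exists since $\D_n$ is a tree with at least two leaves of valence $1$), invoke Corollary \ref{cor:rulz}(a) or (b) together with Proposition \ref{prop:neg}(c) to verify that the induced weight $\alpha_a$ on the slice $(Q_a,\beta_a)$ is again a real Schur root, and observe that after the attendant simplification (Lemma \ref{lem:simp}) the slice is a strictly smaller tree quiver (fewer arrows) whose semi-invariant has a thin weight. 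Induction then terminates at the empty quiver, and by Theorem \ref{thm:bquiv} each slicing step contributes a factor $[s]_{\beta_2-\beta_1,\beta_2}^{|\sigma_1|}$ with integer roots, so $b_f(s)$ is a product of such factors; in particular $f$ is sliceable, which is all that is claimed.
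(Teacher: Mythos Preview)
Your central claim---that whenever the real Schur root $\alpha$ for a fundamental $\D_n$ semi-invariant has a coordinate equal to $2$, the Coxeter dual $\alpha^*$ has all coordinates at most $1$---is false for $n\ge 5$. Take the longest root $\alpha=(1,2,2,\dots,2,1,1)$ with the orientation in which every arrow points toward the branch vertex $n-2$. A direct computation (or the paper's own display) gives $\alpha^*=(0,1,2,2,\dots,2,1,1)$: the entries at vertices $3,\dots,n-2$ are still equal to $2$. So the appeal to Theorem~\ref{thm:tree} ``in the dual picture'' does not go through, and your fallback plan (slice once at an outer leg and observe that the induced weight is thin) fails for exactly the same reason---after a single slice the new $\alpha^*$ still carries $2$'s along the chain. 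Your claim that the slice is a ``strictly smaller tree quiver (fewer arrows)'' is also inaccurate: the rules in Corollary~\ref{cor:rulz} can add arrows, and what decreases is the dimension of the representation space, not the arrow count.

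What the paper actually exploits is not thinness of $\alpha^*$ but the single zero $\alpha^*_1=0$, which is precisely the hypothesis of the dual of Corollary~\ref{cor:rulz}(b). Slicing at the leftmost arrow produces a new quiver in which the old vertex~$1$ (still with $\alpha^*=0$) is now attached one step further along the chain; one slices there again, and repeats. After $n-3$ such slices the extra leaf has been pushed all the way into the branch vertex, the quiver collapses to a genuine $\D_{n-1}$ quiver, and the induced weight is now thin (all entries $\le 1$), so Theorem~\ref{thm:tree} applies. The missing idea in your proposal is this iterated ``push along the chain'' reduction; a single slice is not enough, and neither $\alpha$ nor $\alpha^*$ becomes thin until the chain has been fully traversed.
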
 

\begin{proof}
Proceeding as in Theorem \ref{thm:tree} and using Corollary \ref{cor:rulz}, one we can reduce the proof to the case when $\alpha$ is the longest root. We illustrate the proof with the orientation of $\D_n$ chosen so that all arrows point to the joint vertex.

\[\vcenter{\vbox{\xymatrix@C-1.2pc@R-1pc{
  & & & & \stackrel{1}{\beta_{n-1}} \ar[d] & \\
\stackrel{1}{\beta_1}  \ar[r] & \stackrel{2}{\beta_2}  \ar[r]  & \stackrel{2}{\beta_3}  \ar[r]   & \dots \ar[r]&  \stackrel{2}{\beta_{n-2}} & \ar[l] \stackrel{1}{\beta_n}
}}}\stackrel{\alpha\to \alpha^*}{\xymatrix{ \ar@{~>}[r] & }}
\vcenter{\vbox{\xymatrix@C-1.1pc@R-1pc{
  & & & & \stackrel{1^*}{\beta_{n-1}} \ar[d] & \\
\stackrel{0^*}{\beta_1}  \ar@{-->}[r] & \stackrel{1^*}{\beta_2}  \ar[r]  & \stackrel{2^*}{\beta_3}  \ar[r]   & \dots \ar[r]&  \stackrel{2^*}{\beta_{n-2}} & \ar[l] \stackrel{1^*}{\beta_n}
}}}\stackrel[\ref{cor:rulz} (b)^*]{[s]_{\beta_2-\beta_1,\beta_2}}{\xymatrix{ \ar@{~>}[r] & }}\]
\[\vcenter{\vbox{\xymatrix@C-1.2pc@R-0.8pc{
 & \stackrel{0^*}{\beta_1}\ar@{-->}[d] & & & \stackrel{1^*}{\beta_{n-1}} \ar[d] & \\
\stackrel{1^*}{\beta_2-\beta_1}  \ar[r]  & \stackrel{2^*}{\beta_3}  \ar[r]   & \stackrel{2^*}{\beta_4}\ar[r] & \dots \ar[r]&  \stackrel{2^*}{\beta_{n-2}} & \ar[l] \stackrel{1^*}{\beta_n}
}}}\!\!\!\!\stackrel[\ref{cor:rulz} (b)^*]{[s]_{\beta_3-\beta_1,\beta_3}^2}{\xymatrix{ \ar@{~>}[r] & }}\!\!\!\!
\vcenter{\vbox{\xymatrix@C-1.2pc@R-0.8pc{
 & & \stackrel{0^*}{\beta_1}\ar@{-->}[d] & & \stackrel{1^*}{\beta_{n-1}} \ar[d] & \\
\stackrel{1^*}{\beta_2-\beta_1}  \ar[r]  & \stackrel{2^*}{\beta_3-\beta_1}  \ar[r]   & \stackrel{2^*}{\beta_4}\ar[r] & \dots \ar[r]&  \stackrel{2^*}{\beta_{n-2}} & \ar[l] \stackrel{1^*}{\beta_n}
}}}\]
\[\stackrel[\ref{cor:rulz} (b)^*]{[s]_{\beta_4-\beta_1,\beta_4}^2}{\xymatrix{ \ar@{~>}[r] & }} \dots \stackrel[\ref{cor:rulz} (b)^*]{[s]_{\beta_{n-3}-\beta_1,\beta_{n-3}}^2}{\xymatrix{ \ar@{~>}[r] & }}\!\!\!\!
\vcenter{\vbox{\xymatrix@C-1.2pc@R-1pc{
 & & & \stackrel{0^*}{\beta_1}\ar@{-->}[dr] & \stackrel{1^*}{\beta_{n-1}} \ar[d] & \\
\stackrel{1^*}{\beta_2-\beta_1}  \ar[r]  & \stackrel{2^*}{\beta_3-\beta_1}  \ar[r]   & \dots \ar[r] &\stackrel{2^*}{\beta_{n-3}-\beta_{1}}\ar[r] & \stackrel{2^*}{\beta_{n-2}} & \ar[l] \stackrel{1^*}{\beta_n}
}}}\!\!\!\stackrel[\ref{cor:rulz} (b)^*]{[s]_{\beta_{n-2}-\beta_1,\beta_{n-2}}^2}{\xymatrix{ \ar@{~>}[r] & }}\]
\[\vcenter{\vbox{\xymatrix@C-1.2pc@R-1pc{
 & & & \stackrel{1^*}{\beta_{n-1}} \ar[d] & \\
\stackrel{1^*}{\beta_2-\beta_1}  \ar[r]  & \stackrel{2^*}{\beta_3-\beta_1}  \ar[r]   & \dots \ar[r]&  \stackrel{2^*}{\beta_{n-2}-\beta_1} & \ar[l] \stackrel{1^*}{\beta_n}}}}
\!\!\!\!\!\!\!\stackrel{\alpha^*\to \alpha}{\xymatrix{ \ar@{~>}[r] & }}\!\!\!\!\!\!\!
\vcenter{\vbox{\xymatrix@C-1.2pc@R-1pc{
 & & & \stackrel{1}{\beta_{n-1}} \ar[d] & \\
\stackrel{1}{\beta_2-\beta_1}  \ar[r]  & \stackrel{1}{\beta_3-\beta_1}  \ar[r]   & \dots \ar[r] & \stackrel{1}{\beta_{n-2}-\beta_1} & \ar[l] \stackrel{1}{\beta_n}
}}}\]
At this stage we know that the latter quiver is sliceable, by Theorem \ref{thm:tree}. Continuing,
\[\stackrel[\ref{cor:rulz} (a)]{[s]_{\beta_3-\beta_2,\beta_3-\beta_1}}{\xymatrix@C+1pc{ \ar@{~>}[r] & }}
\vcenter{\vbox{\xymatrix@C-0.8pc{
& & & \stackrel{1}{\beta_{n-1}} \ar[d] & \\
 \stackrel{1}{\beta_2-\beta_1}  \ar@{-->}[r]  & \stackrel{1}{\beta_4-\beta_1} \ar[r] & \dots \ar[r] & \stackrel{1}{\beta_{n-2}-\beta_1} & \ar[l] \stackrel{1}{\beta_n}
}}}\stackrel[\ref{cor:rulz} (a)]{[s]_{\beta_4-\beta_2,\beta_4-\beta_1}}{\xymatrix@C+1pc{ \ar@{~>}[r] & }} \dots \]
\[
\dots \stackrel[\ref{cor:rulz} (a)]{[s]_{\beta_{n-3}-\beta_2,\beta_{n-3}-\beta_1}}{\xymatrix@C+1pc{ \ar@{~>}[r] & }} \vcenter{\vbox{\xymatrix{
 & \stackrel{1}{\beta_{n-1}} \ar[d] & \\
 \stackrel{1}{\beta_2-\beta_1} \ar[r] & \stackrel{1}{\beta_{n-2}-\beta_1} & \ar[l] \stackrel{1}{\beta_n}
}}}\stackrel[\text{Example \ref{thm:beefor}}]{}{\xymatrix@C+3pc{ \ar@{~>}[r] & }}\emptyset\] 
%{[s]_{\beta_{n-2}-\beta_2,\beta_{n-2}-\beta_1}[s]_{\beta_{n-2}-\beta_{n-1}-\beta_1,\beta_{n-2}-\beta_1}[s]_{\beta_n-\beta_{n-2}-\beta_1,\beta_{n-2}-\beta_1}[s]_{\beta_{n-2}-\beta_1}}%

Hence the $b$-function is:
$$b(s)=[s]_{\beta_2-\beta_1,\beta_2}\prod_{i=3}^{n-2}\left([s]_{\beta_i-\beta_1,\beta_i}^2 [s]_{\beta_i-\beta_2,\beta_i-\beta_1}\right) \cdot $$
$$\cdot [s]_{\beta_{n-2}-\beta_{n-1}-\beta_1,\beta_{n-2}-\beta_1}[s]_{\beta_{n-2}-\beta_{n}-\beta_1,\beta_{n-2}-\beta_1}[s]_{\beta_{n-2}-\beta_1}.$$
Accordingly, the homogeneous inequalities that are necessary and sufficient for the semi-invariant to be non-zero are:
$$\beta_1\leq \beta_2 \leq \beta_i, i=3,\dots,n-2,$$
$$\beta_{n-1},\beta_n \leq \beta_{n-2}-\beta_1.$$
If these inequalities are strict, then the semi-invariant is irreducible by Proposition \ref{prop:algiso}. Also, one can write down the corresponding locally semi-simple representation explicitly using Proposition \ref{prop:locsemi} in each step.

\end{proof}

We give an example of a quiver of extended Dynkin type:

\begin{example} We take $\overline{\D}_4$ with the dimension vector $\beta$, with $2\beta_1+\beta_2+\beta_3+\beta_4=3\beta_5$, semi-invariant (unique up to constant) $f=c^V$, where $\Dim V=\alpha=(2,1,1,1,2)$ is a real Schur root:
\[
\vcenter{\vbox{\xymatrix@R-0.5pc{
 & \stackrel{1}{\beta_2} \ar[d] & \\
\stackrel{2}{\beta_1} \ar@{-->}[r] & \stackrel{2}{\beta_5} & \ar[l] \stackrel{1}{\beta_3}\\
& \stackrel{1}{\beta_4} \ar[u] & 
}}}\stackrel[\ref{cor:rulz} (a)]{[s]^2_{\beta_5-\beta_1,\beta_5}}{\xymatrix@C+2pc{ \ar@{~>}[r] & }}
\vcenter{\vbox{\xymatrix@R-0.5pc{
\stackrel{1}{\beta_2} \ar[d] \ar@{-->}[r] & \stackrel{0}{\beta_5-\beta_1} & \\
\stackrel{2}{\beta_1} & \ar[l] \stackrel{1}{\beta_3} \ar@{-->}[r] & \stackrel{0}{\beta_5-\beta_1}\\
\stackrel{1}{\beta_4} \ar[u] \ar@{-->}[r] & \stackrel{0}{\beta_5-\beta_1} & 
}}}\stackrel{}{\xymatrix{ \ar@{~>}[r] & }}\]
\[\stackrel[\ref{cor:rulz} (a)]{[s]_{\beta_1+\beta_2-\beta_5,\beta_2}\cdot[s]_{\beta_1+\beta_3-\beta_5,\beta_3}\cdot[s]_{\beta_1+\beta_4-\beta_5,\beta_4}}{\xymatrix@C+10pc{ \ar@{~>}[r] & }}
\vcenter{\vbox{\xymatrix@C-1pc{
 & \stackrel{1}{\beta_1+\beta_3-\beta_5} \ar[d] & \\
\stackrel{1}{\beta_1+\beta_2-\beta_5} \ar[r] & \stackrel{2}{\beta_1} & \ar[l] \stackrel{1}{\beta_1+\beta_4-\beta_5}}
}}\!\!\!\stackrel{[s]_{\beta_1}}{\xymatrix{ \ar@{~>}[r] & }}
\emptyset
\]
In the last step we noticed the shortcut that the semi-invariant is just the square determinant of size $\beta_1$. So the $b$-function of $f$ is 
$$b_f(s)= [s]^2_{\beta_5-\beta_1,\beta_5} \cdot [s]_{\beta_1+\beta_2-\beta_5,\beta_2}\cdot[s]_{\beta_1+\beta_3-\beta_5,\beta_3}\cdot[s]_{\beta_1+\beta_4-\beta_5,\beta_4}\cdot[s]_{\beta_1}.$$
\end{example}

In contrast with the method by reflections from \cite{en}, we find a Dynkin quiver with a semi-invariant that is not sliceable.

\begin{example}\label{thm:noot}
Take the following quiver of type $\E_6$ with semi-invariant of weight $\langle \alpha, \cdot \rangle = -\langle \cdot, \alpha^*\rangle$, with $\alpha$ being the longest root:
\[\vcenter{\vbox{\xymatrix{
 & & 2\ar[d] & & \\
1 \ar[r]& 2\ar[r] & 3 & \ar[l] 2& \ar[l] 1
}}}\stackrel{\alpha \to \alpha^*}{\xymatrix{ \ar@{~>}[r] & }}
\vcenter{\vbox{\xymatrix{
 & & 1\ar[d] & & \\
1 \ar[r]& 2\ar[r] & 3 & \ar[l] 2& \ar[l] 1
}}}
\]
There are no $1$-sources (resp. $1$-sinks) $a$ with $\alpha_{ta}=\alpha_{ha}$ or with $\alpha^*_{ta}=0$ (resp. $\alpha^*_{ta}=\alpha^*_{ha}$ or $\alpha_{ha}=0$). By Proposition \ref{prop:neg} the semi-invariant is not sliceable. However, in order to compute the $b$-function one can apply the method by reflections from \cite{en}.
\end{example}

\begin{example}\label{ex:symm} Symmetric quivers.

Examples \ref{ex:symdet},\ref{ex:ortho},\ref{ex:symp} are particular cases of semi-invariants of \textit{symmetric} quivers, see \cite{semisymm,symmetric}. In \cite[Proposition 4.1]{sasu}, the $b$-function of a semi-invariant of the equioriented symmetric quiver of type $\A$ is computed based on the multiplicity one property. Many more $b$-functions of semi-invariants of symmetric quivers can be computed using the techniques developed in Section \ref{sec:slice}. A more systematic study of these will be pursued in a subsequent paper.
\end{example}

We show in the next example how to apply Theorem \ref{thm:bquiv} together with Theorem \ref{thm:multi} to compute $b$-functions of \textit{several variables}. The main difference in the process is that we can make only simultaneous simplifications for the semi-invariants as in Lemma \ref{lem:simp} or Corollary \ref{cor:rulz}.

\begin{example}\label{thm:several}($b$-function of several variables) Take the following $\D_5$ quiver with non-zero semi-invariants $f_i=c^{V_i}$, for $i=1,2$, $\alpha^1=\Dim V_1=(0,1,1,0,1)$, $\alpha^2=\Dim V_2=(1,1,0,0,0)$ and $\beta_1+\beta_4=\beta_3$, $\beta_2=\beta_5$. We put the values of $\alpha^1$ and $\alpha^2$ on top of $\beta$:
\[\vcenter{\vbox{\xymatrix{
  & & \stackrel{1,0}{\beta_3} \ar[d] & \\
\stackrel{0,1}{\beta_1}  & \stackrel{1,1}{\beta_2} \ar@{-->}[l]  \ar[r]  & \stackrel{1,0}{\beta_5} \ar[r] & \stackrel{0,0}{\beta_4}
}}}\stackrel[\ref{rem:impo} (b)]{[s]_{\beta_2-\beta_1,\beta_2}^{1,0}}{\xymatrix{ \ar@{~>}[r] & }}
\vcenter{\vbox{\xymatrix{
& \stackrel{1,0}{\beta_3} \ar[d] & \\
 \stackrel{1,1}{\beta_2-\beta_1} \ar@{-->}[r]  & \stackrel{1,1}{\beta_5} \ar[r] & \stackrel{0,1}{\beta_4}\\
 & \stackrel{0,1}{\beta_1} \ar[u] & 
}}}\stackrel[\ref{cor:rulz} (a)]{[s]_{\beta_1,\beta_5}^{1,1}}{\xymatrix{\ar@{~>}[r] & }}
\]
\[\vcenter{\vbox{\xymatrix@R-0.5pc@C-0.5pc{
   \stackrel{1,0}{\beta_3} \ar[d] \ar[r]&  \stackrel{0,0}{\beta_1}  \\
 \stackrel{1,1}{\beta_2-\beta_1} \ar[r] & \stackrel{0,1}{\beta_4}\\
 \stackrel{0,1}{\beta_1} \ar@{-->}[r] \ar[u] & \stackrel{0,0}{\beta_1} 
}}}\stackrel[\ref{cor:rulz} (b)]{[s]^{0,1}_{\beta_1}}{\xymatrix{ \ar@{~>}[r] & }}
\vcenter{\vbox{\xymatrix{
   \stackrel{1,0}{\beta_3} \ar[d] \ar@{-->}[r]&  \stackrel{0,0}{\beta_1}  \\
 \stackrel{1,0}{\beta_2-\beta_1} \ar[r] & \stackrel{0,0}{\beta_4}
}}}\stackrel[\ref{cor:rulz} (b)]{[s]^{1,0}_{\beta_4,\beta_3}}{\xymatrix{ \ar@{~>}[r] & }}
\vcenter{\vbox{\xymatrix{
   \stackrel{1,0}{\beta_4} \ar@{-->}[d] &  \\
 \stackrel{1,0}{\beta_2-\beta_1} \ar[r] & \stackrel{0,0}{\beta_4}
}}}\]
 \[ \stackrel[\ref{cor:rulz} (a)]{[s]^{1,0}_{\beta_2-\beta_3,\beta_2-\beta_1}}{\xymatrix{ \ar@{~>}[r] & }} 
\vcenter{\vbox{\xymatrix{
\stackrel{1,0}{\beta_4} \ar@{-->}[r] & \stackrel{1,0}{\beta_4} 
}}}\stackrel{[s]^{1,0}_{\beta_4}}{\xymatrix{ \ar@{~>}[r] & }}\emptyset\]

Hence we have
$$b_{\underline{m}}(s_1,s_2)=[s]_{\beta_1,\beta_2}^{1,1}[s]^{0,1}_{\beta_1}[s]^{1,0}_{\beta_3}[s]^{1,0}_{\beta_2-\beta_3,\beta_2}.$$
\end{example}

It is not difficult to see that for the quivers of type $\A_n$ the slice method is sufficient to compute all the $b$-functions of several variables (as are the methods in \cite{en},\cite{sugi}). However, the slice method is not always sufficient to obtain directly the $b$-functions of several variables for type $\D_n$ quivers (although the method in \cite{en} is). Nevertheless, given the individual $b$-function (of one variable) of each semi-invariant (see Theorem \ref{thm:dee}), one can in principle apply the Structure Theorem of $b$-functions as in \cite{sugi} for this purpose -- for an example, see \cite[Example 4.3.13]{phd}. To proceed as in \cite{sugi}, one needs an explicit description for the locally semi-simple representation (and use \cite[Lemma 4.2.4]{phd}) and the generic representation. One can describe the locally semi-simple representation of each semi-invariant by Proposition \ref{prop:locsemi}, and the generic representation using the procedure we present in Appendix \ref{app:decomp}.

\appendix
\section{Generic decomposition for Dynkin quivers of type $\D$} \label{app:decomp} 

Based on slices, we give an easy procedure for determining the generic decomposition for type $\D_n$ quivers. 

Let $Q$ be a quiver, and $\alpha$ a prehomogeneous dimension vector. Following \cite{kac2}, we call a decomposition
$$\alpha=\alpha_1 \oplus \alpha_2 \oplus \dots \oplus \alpha_t$$
the \itshape generic decomposition \normalfont (also called canonical decomposition), if the generic representation of dimension vector $\alpha$ decomposes into indecomposable representations of dimension vectors $\alpha_1,\alpha_2,\dots \alpha_t$. As already discussed in Section \ref{sec:quiv}, in this case $\alpha_i$ are real Schur roots, with $\Ext_Q(\alpha_i,\alpha_j)=0$ (that is, the corresponding generic representations have no self-extensions). Moreover, rewriting
$$\alpha=\alpha_1^{\oplus r_1}\oplus \alpha_2^{\oplus r_2} \oplus \dots \oplus \alpha_t^{\oplus r_t}.$$
with $\alpha_i$ distinct, we may assume, after a suitable rearrangement, that $\Hom_Q(\alpha_i,\alpha_j)=0$, for $i<j$ (again, this means that there are no morphisms between the corresponding generic representations). For more details , see \cite{combi,kac2}.

Though there exist algorithms to determine the generic decomposition for a dimension vector (e.g. see \cite{canon}), it is of interest to give clear-cut procedures that are easy to work out by hand. There is such a rule for quivers of type $\A_n$, and this is described in \cite[Proposition 3.1]{abeasis}. We illustrate this construction by the following example:
\[\xymatrix{
3 \ar[r] & 5 & 6 \ar[l] \ar[r] & 3 \ar[r] & 5
}\]
The generic decomposition is given by the following diagram (the connected horizontal components are the indecomposables):
\[\xymatrix@C-0.4pc@R-1pc@M-0.28pc{
 & \bullet & \bullet\ar[l] & & \\
 & \bullet & \bullet\ar[l] & & \bullet \\
\bullet\ar[r] & \bullet  & \bullet\ar[l] & & \bullet\\
\bullet \ar[r] & \bullet & \bullet \ar[l]\ar[r] & \bullet \ar[r] & \bullet\\
\bullet \ar[r] & \bullet & \bullet \ar[l]\ar[r] & \bullet \ar[r] & \bullet\\
 & & \bullet\ar[r] & \bullet \ar[r] & \bullet
}\]

Based on the $\A_n$ case, we extend the rule for quivers of type $\D_n$. Take a quiver with underlying graph $\D_n$ and the following labeling:
\[\xymatrix{
& n & &  & \\
1 & 2 \ar@{-}[l]\ar[u]\ar@{-}[r] & 3 \ar@{-}[r] & \dots \ar@{-}[r] & n-1
}\]
Since the generic decomposition of a quiver and its opposite quiver coincide, we will fix without loss of generality the orientation of the arrow $2\to n$. We illustrate the procedure by examples first. Take the following $\D_n$ quiver with $n=6$ and $\alpha=(3,5,6,3,5,4)$:
\[\xymatrix@R-0.5pc{& 4 &  &  & \\
3 \ar[r] & 5 \ar[u] & 6 \ar[l] \ar[r] & 3 \ar[r] & 5
}\]

First, take the generic decomposition of the $\A_{n-1}$ quiver by dropping the $n$-th vertex. This was done in the example above. Then, the indecomposables of $\A_{n-1}$ that have $0$ dimension at vertex $2$ will also appear in the generic decomposition for $\D_n$. Hence we drop them, and we are left with the following diagram:
\[\xymatrix@C-0.4pc@R-1pc@M-0.28pc{
 & \bullet & \bullet\ar[l] & & \\
 & \bullet & \bullet\ar[l] & & \\
\bullet\ar[r]\ar@{-}[]+<-2em,0.8em>;[rrrr]+<2em,0.8em> & \bullet  & \bullet\ar[l] & & \\
\bullet \ar[r] & \bullet & \bullet \ar[l]\ar[r] & \bullet \ar[r] & \bullet\\
\bullet \ar[r] & \bullet & \bullet \ar[l]\ar[r] & \bullet \ar[r] & \bullet
}\]
We separated by a horizontal line the two classes of indecomposables with dimension at vertex $1$ equal to $0$ or equal to $1$. We call the indecomposables under this line of the first class and over the line of the second class.  Now we place $\alpha_n$ symbols $\circ$ on the left of the diagram starting from the horizontal line and moving downwards ($\circ$ represents the simple representation $S_n$). When we stop, we put another horizontal line to the bottom. Then we move the indecomposables of the second class starting from the top of the diagram and add their dimension vectors starting from the bottom horizontal line and stop if either:
\begin{itemize}
\item[(a)] We reach the top horizontal line, or
\item[(b)] We run out of indecomposables of the second class, or
\item[(c)] There exists a non-zero morphism from the indecomposable of the second class that we want to move to corresponding indecomposable of the first class.
\end{itemize}

In this example we stop due to part (b) and the diagram we get is:
\[\xymatrix@C-0.4pc@R-1pc@M-0.28pc{
\ar@{-}[]+<-2em,0.8em>;[rrrrr]+<2em,0.8em> \circ &\bullet\ar[r] & \bullet  & \bullet\ar[l] & & \\
\circ &\bullet \ar[r] & \bullet & \bullet \ar[l]\ar[r] & \bullet \ar[r] & \bullet\\
\circ &\bullet \ar[r] & \bullet\bullet & \bullet\bullet\ar[l]\ar[r] & \bullet \ar[r] & \bullet\\
\ar@{-}[]+<-2em,-0.8em>;[rrrrr]+<2em,-0.8em> \circ & & \bullet & \bullet\ar[l]  & &
}\]

\vspace{0.1in}

Now we are ready to read off the generic decomposition. The indecomposables outside the horizontal lines will stay the same (there are none in this example). Finally, for each row between the two horizontal lines the dimension vector will have dimension $1$ at vertex $n$. Hence we get in this case
\begin{multline*}
(3,5,6,3,5,4)=(1,1,1,0,0,1)\oplus(1,1,1,1,1,1)\oplus(1,2,2,1,1,1)\oplus(0,1,0,0,0,1)\oplus\\
\oplus (0,0,0,0,1,0)^{\oplus 2} \oplus (0,0,1,1,1,0).
\end{multline*}

\vspace{0.1in}
We give another example:
\[\xymatrix@R-0.5pc{& 4 &  &  & \\
3  & 6 \ar[l] \ar[u] & 5 \ar[l] \ar[r] & 3
}\]

The generic decomposition for the $\A_4$ part is 
\[\xymatrix@C-0.4pc@R-1pc@M-0.28pc{
\bullet & \ar[l] \bullet & \bullet\ar[l] & \\
\bullet & \ar[l] \bullet & \bullet\ar[l] & \\
\bullet & \ar[l] \bullet & \bullet\ar[l] \ar[r] & \bullet\\
\ar@{-}[]+<-2em,0.8em>;[rrr]+<2em,0.8em> & \bullet & \bullet\ar[l] \ar[r] & \bullet\\
& \bullet & \bullet\ar[l] \ar[r] & \bullet\\
 & \bullet & & \\
}\]
Note that all indecomposables have dimension $1$ at the vertex $2$. The diagram joining the two classes of indecomposables is:
\[\hspace{-0.25in}\xymatrix@C-0.4pc@R-1pc@M-0.28pc{
& \bullet & \ar[l] \bullet & \bullet\ar[l] \ar[r] & \bullet\\
\ar@{-}[]+<-2em,0.8em>;[rrrr]+<2em,0.8em>  \circ & &\bullet & \bullet\ar[l] \ar[r] & \bullet\\
 \circ & & \bullet & \bullet\ar[l] \ar[r] & \bullet\\
 \circ & \bullet & \ar[l] \bullet \bullet & \bullet\ar[l] & \\
\ar@{-}[]+<-2em,-0.8em>;[rrrr]+<2em,-0.8em>  \circ & \bullet & \ar[l] \bullet & \bullet\ar[l] &
}\]

\vspace{0.1in}

Here we stopped due to condition (c) since there is a non-zero map from the indecomposable $1\leftarrow 1 \leftarrow 1 \rightarrow 1$ to the corresponding indecomposable $0\leftarrow 1 \leftarrow 1 \rightarrow 1$. Hence the generic decomposition is
$$(3,6,5,3,4)=(1,1,1,1,0)\oplus(0,1,1,1,1)^{\oplus 2}\oplus (1,2,1,0,1)\oplus (1,1,1,0,1).$$

\vspace{0.1in}

\begin{theorem} The algorithm described above gives the generic decomposition for $\D_n$ quivers.
\end{theorem}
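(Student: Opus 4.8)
The plan is to reduce the $\D_n$ problem to the already-known $\A_{n-1}$ case together with the analysis of a single generic linear map, and then to check that the greedy bookkeeping in the algorithm records exactly the resulting generic behaviour. Since the generic decomposition of a quiver and of its opposite coincide, we may fix the orientation $2\to n$ of the arrow at the leaf vertex $n$, as in the statement. Writing $\alpha'=(\alpha_1,\dots,\alpha_{n-1})$, I would first observe that the projection $\pi\colon\Rep(\D_n,\alpha)\to\Rep(\A_{n-1},\alpha')$ forgetting vertex $n$ and the arrow $2\to n$ is a $\GL(\alpha)$-equivariant trivial vector bundle (fibre $\Hom(\complex^{\alpha_2},\complex^{\alpha_n})$); since $\pi$ is open and dominant, the generic representation $M$ of $\Rep(\D_n,\alpha)$ restricts to the generic representation $M'$ of $\Rep(\A_{n-1},\alpha')$, and the datum of $M$ is that of $M'$ equipped with a \emph{generic} map $\phi\colon M'(2)\to\complex^{\alpha_n}$. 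This justifies the first step of the algorithm: the decomposition $M'=\bigoplus_i N_i$ is precisely the one produced by the rule of \cite{abeasis}.

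Next I would split off the trivial part. Because $\A_{n-1}$ is of Dynkin type, each $N_i$ is a thin interval module, so $\dim N_i(2)\in\{0,1\}$; whenever $\dim N_i(2)=0$ the map $\phi$ kills $N_i(2)$, and $N_i$ (extended by $0$ at vertex $n$) is a direct summand of $M$, visibly indecomposable over $\D_n$ and without self-extensions. These are the summands carried over unchanged. It then remains to decompose the sub-representation of $M$ supported on the $N_i$ with $\dim N_i(2)=1$ --- intervals $[a_i,b_i]$ with $a_i\in\{1,2\}$, those containing vertex $1$ forming the ``second class'' and the rest the ``first class'' --- together with $\complex^{\alpha_n}$ and the generic restricted map $\phi$. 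The goal is to prove that the decomposition of this piece is exactly the one obtained by the stated greedy procedure: place $\alpha_n$ copies of $S_n$, then absorb second-class modules from the top (replacing $N_i$ together with a free copy of $S_n$ by the thin $\D_n$-indecomposable on $\operatorname{supp}N_i\cup\{n\}$) until one of the conditions (a)--(c) occurs, leaving the remaining second-class modules, all first-class modules, and the leftover copies of $S_n$ intact.

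This last point is the heart of the matter, and where I expect the real work. One direction is a normal-form statement for $\phi$: a generic map of maximal rank can, after a basis change at vertex $n$, be brought to a block form realizing exactly the claimed absorptions. The content of the stopping rules is then that (a) and (b) are the obvious rank and supply bounds, while (c) is an extension obstruction --- if $N_i=[1,b]$ is second class and the corresponding first-class interval $N_i'=[2,b]$ is present among the summands with $\Hom_{\A_{n-1}}(N_i,N_i')\neq 0$, then the combined thin module $M_i$ on $\operatorname{supp}N_i\cup\{n\}$ has a nonzero $\Ext_{\D_n}$-group with $N_i'$, so the generic $M$ cannot contain both $M_i$ and $N_i'$, forcing $N_i$ to remain un-absorbed. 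I would establish this by computing the relevant Euler pairings over $\D_n$ and comparing them with the (known, from the $\A_{n-1}$ interval combinatorics) $\Hom$-dimensions; the bookkeeping of which pairs $(N_i,N_i')$ occur, and in which order along the \cite{abeasis} diagram, is the delicate part. Finally I would close the argument in either of two equivalent ways: check directly that the module built by the algorithm has $\Ext(M,M)=0$ (hence lies in the dense orbit), or check that all its summands are real Schur roots with pairwise vanishing $\Ext$ and that their dimension vectors sum to $\alpha$, and invoke the uniqueness of the canonical decomposition. The two worked examples in the text are instances of this scheme. The main obstacle is the verification of condition (c): translating a single nonzero morphism along the $\A_{n-1}$-part into a nonzero extension over $\D_n$, together with the combinatorial bookkeeping just described; the remaining steps are routine.
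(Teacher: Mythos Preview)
Your reduction to the $\A_{n-1}$ problem plus a generic linear map $\phi\colon M'(2)\to\complex^{\alpha_n}$ is exactly the paper's starting point, and the paper likewise works with a normal form for this map. So the overall architecture is right.

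However, you have misread what the algorithm actually outputs, and this vitiates the specific argument you outline. The $\circ$ symbols (copies of $S_n$) are placed \emph{downward from the top line}, i.e.\ they are first aligned with the first-class rows; only afterwards are the second-class modules moved down, starting from the \emph{bottom} line and going up. A typical row between the two lines therefore carries a first-class module, possibly a $\circ$, and possibly a second-class module, and the resulting $\D_n$-indecomposable is the sum of all three dimension vectors. In the first worked example one obtains $(1,2,2,1,1,1)$, which is not thin. Your description --- ``replace $N_i$ together with a free copy of $S_n$ by the thin $\D_n$-indecomposable on $\operatorname{supp}N_i\cup\{n\}$, leaving all first-class modules intact'' --- is not what the procedure does, and consequently your proposed interpretation of condition~(c) as an $\Ext_{\D_n}$-obstruction between a thin $\D_n$-module and an untouched first-class interval targets the wrong pair of summands. (A smaller point: which of the two classes contains vertex~$1$ depends on the orientation of the edge $1$--$2$; your fixed assignment matches only one of the two worked examples.)

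The paper's proof avoids $\Ext_{\D_n}$ computations entirely. After fixing the generic $\A_{n-1}$-representation $R=\bigoplus V_i^{p_i}\oplus\bigoplus W_j^{q_j}\oplus\bigoplus Z_k$, it identifies the stabilizer $G_R$ explicitly as $\GL(\alpha_n)\times\GL(\underline{p})\times\GL(\underline{q})\times U\times U'$, where the unipotent factor $U'$ is built from the spaces $\Hom_{\A_{n-1}}(W_j,V_i)$. One then writes down a concrete block matrix $Z$ with $1$'s placed exactly according to the diagram rules, checks that it has dense orbit already under the reductive part, and finally observes that the action of $U'$ allows one to cancel a $1$ in a $W_j$-column against a $1$ in the same row of a $V_i$-column precisely when $\Hom_{\A_{n-1}}(W_j,V_i)\neq 0$. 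This is condition~(c); it is a Hom condition on the $\A_{n-1}$ side acting through the stabilizer, not an Ext condition on the $\D_n$ side. The $\D_n$-decomposition is then read off directly from the normal form of $Z$ together with $R$.
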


\begin{proof}
We give a proof using slices. First, write the generic decomposition for a generic representation $R$ of the $\A_{n-1}$ quiver in the form
$$R=\bigoplus_{i=1}^m V_i^{p_i} \oplus \bigoplus_{i=1}^n W_i^{q_i} \oplus \bigoplus_i Z_i.$$
Here $V_i$ and $W_i$ are representations of the first and second class, respectively (separated by the horizontal line as in the examples) and $Z_i$ are the representations with dimension $0$ at vertex $2$. We assume that the order is chosen such that:
\begin{itemize}
\item[(a)] There is a map from $V_i$ to $V_j$ iff $j \leq i$;
\item[(b)] There is a map from $W_i$ to $V_j$ iff $j \leq i$;
\item[(c)] There are no maps from $V_i$ to $W_j$ for all $i,j$.
\end{itemize}
We note that this can be achieved immediately from the generic decomposition algorithm for $\A_{n-1}$ (after dropping the representations $Z_i$): $V_i$ are the representations below the horizontal line, ordered from top to bottom, and $W_i$ are the representations above the horizontal line, ordered from top to bottom. With this in mind, we take the slice as in Section \ref{subsec:slice}. Take a representation of the form $V=Z+R$ in $\Rep(\D_n,\alpha)$, with $Z\in \Hom(\complex^{\alpha_2},\complex^{\alpha_n})$. Then $V$ has a dense $\GL(\alpha)$-orbit if and only if $Z$ has a dense orbit in $\Hom(\complex^{\alpha_2},\complex^{\alpha_n})$ under the action of the stabilizer $G_R=\GL(\alpha_n)\times \GL(\underline{p})\times\GL(\underline{q}) \times U \times U'$, where $U=\prod_{j < i} \Hom(\complex^{p_i},\complex^{p_j}) \prod_{j < i} \Hom(\complex^{q_i},\complex^{q_j})$ and $U'=\prod_{i,j} \Hom(W_i,V_j)^{p_jq_i}$. It can be easily seen that forgetting about the action of $U'$, the following element already has a dense orbit in $\Hom(\complex^{\alpha_2},\complex^{\alpha_n})$:
$$Z=\kbordermatrix{
~ & V_1 & V_1 & \dots & V_m & \vrule & W_1 & W_1 & \dots & W_n\cr
& 1 & 0 & \dots & 0 &\vrule &  &  & &\cr
& 0 & 1 & \dots & 0 &\vrule  &  & &\cr
&  & & \ddots & & \vrule &  & & \iddots & \cr
& &  &  & & \vrule & 0 & 1 & \dots & 0\cr
& &  &  & & \vrule & 1 & 0 & \dots & 0
}.$$
Here there are $p_i$ (resp. $q_i$) columns corresponding to $V_i$ (resp. $W_i$), and we put the ones diagonally in the first (resp. second) block starting from the top left (resp. bottom left) until we reach the bottom or right (resp. top or right) edge of the block. The arrangement of ones corresponds to stopping under condition (a) or (b). Now using the action of $U'$, if two ones are in the same row corresponding to the columns of $V_i$ and $W_j$, and $\Hom_Q(W_j,V_i)\neq 0$, then we can cancel the $1$ in the column of $W_j$. This corresponds to stopping under condition (c).
\end{proof}

\begin{remark}
The article \cite{abeasisd} describes the generic decomposition for an equioriented quiver of type $\D$. The explicit description of generic representations for type $\A$ and $\D$ quivers is also pursued in the recent paper \cite{riedtrec}.
\end{remark}

\vspace{0.05in}

\section*{Acknowledgement}
The author is indebted to Prof. Jerzy Weyman for many valuable discussions.

\bibliographystyle{amsplain}
\bibliography{biblo}

\end{document}